\newtheorem{thm}{Theorem}[section]     
\newtheorem{lem}[thm]{Lemma}
\newtheorem{prop}[thm]{Proposition}
\newtheorem{cor}[thm]{Corollary}
\theoremstyle{definition}  
\newtheorem{definition}[thm]{Definition}
\theoremstyle{remark}
\newtheorem{remark}{{\bf Remark}}[section]
\numberwithin{equation}{section}
\DeclareMathOperator{\tr}{tr}
\DeclareMathOperator{\Lip}{Lip}
\newcommand{\tim}{\times}   
\newcommand{\R}{\mathbb R}
\newcommand{\T}{\mathbb T}
\newcommand{\pl}{\partial}
\newcommand{\cA}{\mathcal{A}}
\newcommand{\cF}{\mathcal{F}}
\newcommand{\cX}{\mathcal{X}}
\newcommand{\N}{\mathbb N}
\newcommand{\lan}{\langle}
\newcommand{\ran}{\rangle}
\newcommand{\disp}{\displaystyle}
\newcommand{\cM}{\mathcal{M}}
\newcommand{\gth}{\theta}
\newcommand{\cR}{\mathcal{R}}
\newcommand{\gd}{\delta}
\newcommand{\gs}{\sigma}
\newcommand{\fr}{\frac}
\newcommand{\rS}{\mathrm{S}}
\newcommand{\rD}{\mathrm{D}}
\newcommand{\rN}{\mathrm{N}}
\newcommand{\gO}{\varOmega}
\def\gl{\lambda}
\def\ga{\alpha}
\def\gl{\lambda}
\def\gz{\zeta}
\def\mid{\,:\,}
\def\du#1{\langle#1\rangle}
\newcommand{\lbar}[1]{\mkern 1.9mu\overline{\mkern-1.9mu#1\mkern-0.1mu}
\mkern 0.1mu}
\def\ol{\lbar}
\newcommand{\bye}{\end{document}}
\newcommand{\by}{\end{proof}\end{document}}
\def\cL{\mathcal{L}}
\def\bS{\mathbb{S}}
\def\cD{\mathcal{D}}\def\cP{\mathcal{P}}
\def\ep{\varepsilon}
\def\cU{\mathcal{U}}
\def\rN{\mathrm{N}}
\def\cG{\mathcal{G}}
\def\cF{\mathcal{F}}
\def\cR{\mathcal{R}}
\def\t{\T^n}
\def\BC{\mathrm{BC}}
\def\bry{\pl\gO}
\def\tt{\ol\gO}
\def\ran{\rangle}
\def\lan{\langle}
\def\bb{\ol\gO\tim\cA}
\begin{document}

\title[The vanishing discount problem and viscosity Mather measures. 
Part 2]
{The vanishing discount problem and viscosity Mather measures. 
Part 2: boundary value problems}

\author[H. ISHII, H. MITAKE, H. V. TRAN] 
{Hitoshi Ishii${}^*$, Hiroyoshi Mitake and Hung V. Tran}

\thanks{
The work of HI was partially supported by the JSPS grants: KAKENHI \#16H03948,  
\#26220702,  
the work of HM was partially supported by the JSPS grants: KAKENHI \#15K17574, \#26287024, \#16H03948,  
and the work of HT was partially supported by NSF grant DMS-1615944. 
}

\address[H. Ishii]{
Faculty of education and Integrated Arts and Sciences,
Waseda University
1-6-1 Nishi-Waseda, Shinjuku, Tokyo 169-8050 Japan}
\email{hitoshi.ishii@waseda.jp}
\thanks{${}^*$ Corresponding author}

\address[H. Mitake]{
Institute of Engineering, Division of Electrical, Systems and Mathematical Engineering, 
Hiroshima University 1-4-1 Kagamiyama, Higashi-Hiroshima-shi 739-8527, Japan}
\email{hiroyoshi-mitake@hiroshima-u.ac.jp}

\address[H. V. Tran]
{
Department of Mathematics, 
480 Lincoln Dr, Madison, WI 53706, USA}
\email{hung@math.wisc.edu}

\date{\today}
\keywords{additive eigenvalue problem, fully nonlinear, degenerate elliptic PDEs, boundary value problem, state constraint problem, Dirichlet problem, Neumann problem, Mather measures, vanishing discount}
\subjclass[2010]{
35B40, 
35J70, 
49L25 
}

\begin{abstract}
In \cite{IsMtTr1} (Part 1 of this series), we  
have introduced a variational approach to studying
the vanishing discount problem for fully nonlinear, degenerate elliptic, partial differential equations in 
a torus.
We develop this approach further here to handle boundary value problems.
In particular, we establish new representation formulas for solutions of discount problems, critical values,
and use them to prove 
convergence results for the vanishing discount problems.
\end{abstract} 

\maketitle

\tableofcontents

\section{Introduction}
In this paper, we study the fully nonlinear, possibly  
degenerate, elliptic partial differential equation (PDE) in a bounded domain

\begin{equation}\label{DP}\tag{DP$_\lambda$}
\begin{cases}
\gl u(x)+F(x,Du(x),D^2u(x))=0 \ \ \text{ in }\ \gO,&\\[3pt]
\BC \ \ \text{ on }\bry, 
\end{cases}
\end{equation}
where $\gl$ is a given positive constant which is often called a discount factor,
and $F\mid \ol\gO\tim\R^n\tim\bS^n\to\R$ 
is a given continuous function.
Here $\gO$ is a bounded domain (that is, open and connected set) in $\R^n$,
$\bS^n$ 
denotes the space of $n\tim n$ real symmetric matrices,
 and $\BC$ represents a boundary condition (state constraint, Dirichlet, or Neumann boundary condition).
The unknown here is 
a real-valued function $u$ on $\ol \gO$,    
and $Du$ and $D^2u$ denote the gradient and Hessian of $u$, respectively. 
We are always concerned  with viscosity solutions of fully nonlinear, possibly degenerate,  
elliptic PDE, and the adjective ``viscosity'' is omitted henceforth. 

Associated with the vanishing discount problem for (DP$_\lambda$) is the following ergodic problem
\begin{equation}\label{E}\tag{E}
\begin{cases}
F(x,Du(x),D^2u(x))=c \ \ \text{ in }\ \gO,&\\[3pt]
\BC \ \ \text{ on }\bry. 
\end{cases} 
\end{equation}
We refer the boundary-value problem (E), with a given constant $c$,  
as (E$_c$), while the unknown for the ergodic problem (E) 
is a pair of a function $u \in C(\ol \gO)$ and a constant $c \in \R$ 
such that $u$ is a solution of (E$_c$). 
When $(u,c) \in C(\ol \gO) \times \R$
is 
a solution of (E), we call $c$ a critical value (or an additive eigenvalue).

Our main goal is to study the vanishing discount problem for (DP$_\lambda$),
that is, for solutions $v^\lambda$ of (DP$_\lambda$) with $\lambda>0$,
we investigate the asymptotic behavior of $\{v^\lambda\}$ as $\lambda \to 0$.
The particular question we want to address here is whether the {\it whole family} $\{v^\gl\}_{\gl>0}$ (after normalization)
converges or not to a function in $C(\tt)$ as $\gl \to 0$.
As the limiting equation (\ref{E}$_c$) is not strictly monotone in $u$ and has many solutions in general, 
proving or disproving such convergence result is challenging.

The convergence results of the whole family $\{v^\gl\}_{\gl>0}$ were established for 
convex Hamilton-Jacobi equations in \cite{DFIZ} (first-order case in a periodic setting), 
\cite{AlAlIsYo} (first-order case with Neumann-type boundary condition), 
\cite{MiTr} (degenerate viscous case in a periodic setting).
Recently, the authors \cite{IsMtTr1}  
have developed
a new variational approach 
for this
vanishing discount problem for fully nonlinear, degenerate elliptic PDEs,
and 
proved the convergence of the whole family $\{v^\gl\}_{\gl>0}$ in the periodic setting.

We develop the variational method introduced in \cite{IsMtTr1} further here to handle boundary value problems.
Our goal is twofold. 
Firstly, we establish new representation formulas for $v^\gl$ as well as the critical value $c$
in the settings of the state constraint, Dirichlet, and Neumann boundary conditions.
Secondly, we apply these representation formulas to show that $\{v^\gl\}_{\gl>0}$ (after normalization)
converges as $\gl \to 0$.
Let us make it clear that all of the results in this paper and the aforementioned ones 
require 
the 
convexity of $F$ in the gradient and Hessian variables.
We refer to a forthcoming paper \cite{GMT} for convergence results of vanishing discount problems for 
some nonconvex first-order Hamilton-Jacobi equations. 

The main results, which, as mentioned above, consist of representation formulas and the convergence of $\{v^\gl\}_{\gl>0}$, 
are stated in Sections \ref{sec-s}, 
\ref{sec-d}, and \ref{sec-n}  
for the state constraint, Dirichlet, and Neumann
problems, respectively.   
 
\subsection{Setting and Assumptions}
We describe the setting and state the main assumptions here.  

Let $\cA$ be a non-empty, $\gs$-compact and locally compact metric space and 
$F\mid \ol\gO\tim\R^n\tim\bS^n\to\R$ be given by
\begin{equation}\tag{F1}\label{F1}
F(x,p,X)=\sup_{\ga\in\cA}(-\tr a(x,\ga)X-b(x,\ga)\cdot p-L(x,\ga)), 
\end{equation}
where  
$a\mid \ol\gO\tim\cA \to\bS_+^n$, $b\mid \ol\gO\tim\cA\to\R^n$ and $L\mid \ol\gO\tim\cA\to\R$ are continuous.
Here, $\bS_+^n$ denotes the set of all non-negative definite matrices 
$A\in\bS^n$, $\tr A$ and $p\cdot q$ designate the trace of $n\tim n$ matrix $A$ and
the Euclidean inner product of $p,q\in\R^n$, respectively. 

Assume further that
\begin{equation}\tag{F2}\label{F2}
F\in C(\ol\gO\tim\R^n\tim\bS^n).
\end{equation}
It is clear under \eqref{F1} and \eqref{F2} 
that $F$ is degenerate elliptic in the sense 
that for all $(x,p,X)\in \ \ol \gO \tim\R^n\tim\bS^n$, if $Y\in\bS_+^n$,  
then $F(x,p,X+Y)\leq F(x,p,X)$ and that, for each $x\in\ol\gO$, 
the function $(p,X)\mapsto F(x,p,X)$ is convex on $\R^n\tim\bS^n$. 
The equations \eqref{DP}, (\ref{E}$_c$), with $F$ of the form \eqref{F1}, 
are called Bellman equations, or Hamilton-Jacobi-Bellman equations 
in connection with the theory of stochastic optimal control. 
In this viewpoint, the set $\cA$ is often called a control set or region.    

With the function $L$ in the definition of $F$, we define
\[
\Phi^+:=\left\{\phi\in C(\tt\tim\cA)\mid 
\phi(x,\ga)=tL(x,\ga)+\chi(x) \ \text{for some} \ t>0,  \, \chi\in C(\tt)\right\}. 
\]
It is clear that $\Phi^+$ is a convex cone in $C(\tt \tim \cA)$.
For $\phi= tL+\chi \in \Phi^+$, we define
\[
F_\phi(x,p,X) = \sup_{\ga \in \cA} \left(- \tr a(x,\ga)X - b(x,\ga)\cdot p - \phi(x,\ga) \right).
\] 
The form of $\phi$ allows us to compute that
\begin{align*}
F_\phi(x,p,X) &= \sup_{\ga \in \cA} \left(- \tr a(x,\ga)X - b(x,\ga)\cdot p - \phi(x,\ga) \right)\\
&=t \sup_{\ga \in \cA} \left(- \tr a(x,\ga) t^{-1}X - b(x,\ga)\cdot t^{-1}p - L(x,\ga) \right) - \chi(x)\\
&=tF(x,t^{-1}p,t^{-1}X) - \chi(x),
\end{align*}
which yields that $F_\phi \in C(\tt \times \R^n \times \bS^n)$ if we assume (F2). 
We note here that,  
except when $L(x,\ga)$ is independent of $\ga$, 
$\phi\in\Phi^+$ is represented 
uniquely as $\phi=tL+\chi$ for some $t>0$ and $\chi\in C(\tt)$.  
 
We often write $F[u]$ and $F_\phi[u]$ to denote the functions 
$x\mapsto F(x,Du(x),D^2u(x))$ and 
$x\mapsto F_\phi(x,Du(x),D^2u(x))$, respectively. 

The 
following 
are some further assumptions we need 
in the paper,
and we put labels on these for convenience later.

\begin{equation}\tag{CP$_{\rm loc}$}\label{CP}
\left\{\text{
\begin{minipage}{0.83\textwidth}
For any $\gl>0$, $\phi \in \Phi^+$, and open subset $U$ of $\gO$,
if $v,\,w\in C(U)$ are a subsolution and a supersolution of 
$\gl u+F_\phi[u]=0$ in $U$, respectively, and $v \leq w$ on $\partial U$,
then $v\leq w$ in $U$.
\end{minipage}
}\right.\end{equation}
 
\begin{equation} \tag{EC}\label{EC}
\left\{\text{
\begin{minipage}{0.85\textwidth}
For $\gl>0$, let $v^\gl \in C(\tt)$ be a solution of \eqref{DP}. 
The family $\{v^\gl\}_{\gl>0}$ is equi-continuous on $\tt$.
\end{minipage}
}\right.
\end{equation}

\begin{equation}\tag{L}\label{L}\left\{
\text{
\begin{minipage}{0.85\textwidth}
$L=+\infty\,$ at infinity,\ 
that is, for any $M\in\R$, there exists a compact subset $K$ of $\cA$ 
such that $L\geq M$ in $\tt\tim(\cA\setminus K)$.
\end{minipage}}
\right.
\end{equation}

We say that $L$ is coercive if condition \eqref{L} is satisfied, 
and, when $\cA$ is compact, condition \eqref{L} always holds.

Some more assumptions are needed 
and stated in the upcoming sections according to the type of boundary conditions.

\subsection*{Outline of the paper}
In Section \ref{sec-pre}, we introduce some notation and preliminaries.
We first consider the state constraint problem in Section \ref{sec-s} as it is the simplest one.
The Dirichlet problem and the Neumann problem are studied in Section \ref{sec-d} and Section \ref{sec-n}, respectively.
Finally, some examples are given in Section \ref{sec-ex}.

\section{Notation and preliminaries} \label{sec-pre}
First we present our notation.
Given a metric space $E$, $\Lip(E)$ denotes the space of Lipschitz continuous functions on $E$. Also, 
let $C_{\rm c}(E)$ denote the space of continuous 
functions on $E$ with compact support. 
Let $\cR_E$, $\cR_E^+$ and $\cP_E$ denote the spaces 
of all Radon measures, all nonnegative Radon measures and Radon 
probability measures on $E$, respectively. 
For any function $\phi$ on $E$ integrable with respect 
$\mu\in\cR_E$, we write
\[
\lan\mu,\phi\ran=\int_E \phi(x)\mu(d x).
\] 
With the function $L$ from the definition of $F$, we set
\[\begin{aligned}
\Phi^+&\,:=\left\{tL+\chi\mid t>0,  \, \chi\in C(\tt)\right\},\qquad
\Psi^+:=\Phi^+\tim C(\bry),\\
\Psi^+(M)&\,:= \{(tL+\chi,\psi)\in\Psi^+\mid 
\|\chi\|_{C(\tt)}< tM, \|\psi\|_{C(\bry)}< tM\} \ \ \text{ for }M>0,
\end{aligned}\]
where $(t,\chi)$ in the braces above 
ranges over $(0,\,\infty)\tim C(\tt)$,
and
\[
\cR_L:=\{\mu\in\cR_{\bb}\mid L \text{ is integrable with respect to }\mu\}. 
\]

Next, we give three basic lemmas  
related to the weak convergence of measures.  
Let $\cX$ be a $\gs$-compact, locally compact metric space 
and $f\mid \cX\to\R$ be a continuous function. 
Henceforth, when $f$ is bounded from below on $\cX$ 
and $\mu$ is a nonnegative Radon measure on $\cX$, 
we write 
\[
\int_\cX f(x)\mu(dx)=+\infty,
\] 
to indicate that $f$ is not integrable with respect to $\mu$.
We denote by $\lan\mu,f\ran$, for simplicity, the integral
\[
\int_\cX f(x)\mu(dx).
\]

Assume that 
$f$ is coercive in the sense that $f=+\infty$ at infinity. 
In particular, $f$ is bounded below on $\cX$. 

\begin{lem}\label{basic-lsc} The functional 
$\mu\mapsto \lan \mu, f\ran$ 
is lower semicontinuous on $\cR_\cX^+$ 
in the topology of weak convergence of measures. 
\end{lem}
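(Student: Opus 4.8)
The plan is to prove lower semicontinuity by reducing to the case of bounded continuous test functions via a monotone approximation. Suppose $\mu_k \to \mu$ weakly in $\cR_\cX^+$. We want $\langle \mu, f \rangle \le \liminf_k \langle \mu_k, f \rangle$. Since $f$ is coercive, it is bounded below, so after adding a constant we may assume $f \ge 0$ on $\cX$ (this does not affect the inequality since $\mu_k(\cX) \to \mu(\cX)$ is bounded, or more carefully, we work with $f$ replaced by $f - \inf f \ge 0$ and note the constant contributes $(\inf f)\,\mu_k(\cX) \to (\inf f)\,\mu(\cX)$).

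The key step is to construct an increasing sequence $f_j \in C_{\rm c}(\cX)$ with $0 \le f_j \le f$ and $f_j \uparrow f$ pointwise on $\cX$. Here I use that $\cX$ is $\sigma$-compact and locally compact: choose compact sets $K_j$ with $K_j \subset \Int K_{j+1}$ and $\bigcup_j K_j = \cX$, and choose cutoff functions $\eta_j \in C_{\rm c}(\cX)$ with $0 \le \eta_j \le 1$, $\eta_j = 1$ on $K_j$. Then set $f_j := \min(f, j)\,\eta_j$. One checks $f_j \in C_{\rm c}(\cX)$, $0 \le f_j \le f$, and for any fixed $x$, once $j$ is large enough that $x \in K_j$ and $j \ge f(x)$ we have $f_j(x) = f(x)$, so $f_j \uparrow f$ pointwise. (If one wants monotonicity $f_j \le f_{j+1}$ literally, replace $f_j$ by $\max_{i \le j} f_i$, still in $C_{\rm c}(\cX)$.)

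Now for each fixed $j$, since $f_j \in C_{\rm c}(\cX) \subset C_b(\cX)$, weak convergence gives $\langle \mu, f_j \rangle = \lim_k \langle \mu_k, f_j \rangle$. Since $f_j \le f$, we have $\langle \mu_k, f_j \rangle \le \langle \mu_k, f \rangle$ for every $k$, hence
\[
\langle \mu, f_j \rangle = \lim_k \langle \mu_k, f_j \rangle \le \liminf_k \langle \mu_k, f \rangle.
\]
Finally let $j \to \infty$: by the monotone convergence theorem $\langle \mu, f_j \rangle \uparrow \langle \mu, f \rangle$ (allowing the value $+\infty$), which yields $\langle \mu, f \rangle \le \liminf_k \langle \mu_k, f \rangle$ as desired.

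The only delicate point is the bookkeeping around $f$ possibly being negative and around the convention that $\langle \mu, f \rangle = +\infty$ when $f$ is not $\mu$-integrable; once one normalizes to $f \ge 0$ this is handled cleanly by monotone convergence, so I do not expect a genuine obstacle. The construction of the cutoffs $\eta_j$ is where $\sigma$-compactness and local compactness of $\cX$ are used, and this is completely standard.
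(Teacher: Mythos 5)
Your overall strategy---approximate $f$ from below by functions in $C_{\rm c}(\cX)$, use weak convergence term by term, and finish with monotone convergence---is exactly the paper's, and the core of the argument is fine. But there is a real gap in the opening normalization step.

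You reduce to $f\ge 0$ by replacing $f$ with $f-\inf f$ and claiming that the constant term is harmless because $\mu_k(\cX)\to\mu(\cX)$. That claim is not justified by the hypothesis. In this paper ``weak convergence of measures'' on the $\sigma$-compact, locally compact space $\cX$ means testing against $C_{\rm c}(\cX)$ (see the paper's explicit remark on the meaning of weak convergence in the Dirichlet section). Under that (vague) convergence, total mass is only lower semicontinuous: one has $\mu(\cX)\le\liminf_k\mu_k(\cX)$, and strict inequality is possible when mass escapes to infinity. Since $\inf f<0$ is exactly the situation in which you need to subtract a negative constant, the sign of the error term works against you: from $\lan\mu,f-\inf f\ran\le\liminf\lan\mu_k,f-\inf f\ran$ and $\mu(\cX)\le\liminf\mu_k(\cX)$ alone you cannot conclude $\lan\mu,f\ran\le\liminf\lan\mu_k,f\ran$. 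The normalization as written therefore does not go through.

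There are two clean ways to repair this. The first is what the paper actually does: do not normalize $f$ at all. Use coercivity to fix a compact $K$ with $f>0$ on $\cX\setminus K$, and choose cutoffs $\chi_k\in C_{\rm c}(\cX)$ with $0\le\chi_k\le\chi_{k+1}\le 1$, $\chi_k\equiv 1$ on $K$, and $\chi_k\uparrow\mathbf 1_\cX$. Then $\chi_k f\le f$ holds \emph{everywhere} (on $K$ they agree; off $K$ one has $0\le\chi_k f\le f$ since $f>0$ there), and your argument with MCT finishes the proof without ever touching $\mu_k(\cX)$. The second repair keeps your normalization but justifies it: if $\liminf_k\lan\mu_k,f\ran<\infty$ (the only non-trivial case), then coercivity of $f$ together with a Chebyshev-type estimate yields tightness of the relevant subsequence, and tightness plus vague convergence does give $\mu_k(\cX)\to\mu(\cX)$. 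This is more work than the first fix and essentially reproduces the tightness argument the paper reserves for Lemma~\ref{basic-cpt}. Either way, the hole is fixable, but as written the step ``$\mu_k(\cX)\to\mu(\cX)$'' needs an argument. (Incidentally, the truncation $\min(f,j)$ in your construction is superfluous: $f$ is continuous, hence bounded on the compact support of $\eta_j$, so $f\eta_j$ is already in $C_{\rm c}(\cX)$.)
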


\begin{proof} Let $\{\mu_j\}_{j\in\N}\subset\cR_{\cX}^+$ be a sequence 
converging to $\mu\in\cR^+_{\cX}$ weakly in the sense of measures.
Select a compact set $K\subset\cX$ so that $f(x)>0$ 
for all $x\in\cX\setminus K$, and then a sequence 
$\{\chi_k\}_{k\in\N}\subset C_{\rm c}(\cX)$ so that 
$0\leq \chi_k\leq \chi_{k+1}\leq 1$ on $\cX$, 
$\chi_k=1$ on $K$ for all 
$k\in\N$,
and $\chi_k \to \mathbf{1}_{\cX}$ pointwise as $k \to \infty$.
Since $\chi_k f\in C_{\rm c}(\cX)$ and $\chi_k f\leq f$ on $\cX$, we have
\[
\lan\mu,\chi_k f\ran
=\lim_{j\to\infty}\lan\mu_j,\chi_kf\ran
\leq\liminf_{j\to\infty}\lan\mu_j,f\ran \ \ \ \text{ for all }\ k\in\N.
\]
Hence, using the monotone convergence theorem if
\[
\liminf_{j\to\infty}\lan\mu_j,f\ran<+\infty,
\]
we conclude that
\[
\lan\mu,f\ran
\leq\liminf_{j\to\infty}\lan\mu_j,f\ran. \qedhere
\]
\end{proof}

\begin{lem} \label{basic-cpt}
Let $r\in\R$ and $s>0$, and define 
\[
\cR^+_{f,r,s}=\{\mu\in\cR^+_\cX\mid \lan \mu, f\ran\leq r,\ 
\mu(\cX)\leq s\}
\]
Then $\cR^+_{f,r,s}$ is compact with the topology of weak convergence of measures. 
\end{lem}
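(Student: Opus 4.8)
The strategy is to show that $\cR^+_{f,r,s}$ is both tight and weakly closed, and then invoke Prokhorov's theorem (in the locally compact $\sigma$-compact setting, where the relevant statement is that a set of measures with uniformly bounded total mass is relatively compact in the weak topology iff it is tight). First I would establish tightness: given $\ep>0$, use the coercivity hypothesis $f=+\infty$ at infinity to pick $M>0$ with $M>r/\ep+ \|f\|$ on some compact exhausting set — more precisely, choose a compact $K_\ep\subset\cX$ so that $f\ge M$ on $\cX\setminus K_\ep$ with $M$ chosen large (relative to $r$ and a lower bound $-m$ for $f$). Then for any $\mu\in\cR^+_{f,r,s}$,
\[
M\,\mu(\cX\setminus K_\ep)\le \int_{\cX\setminus K_\ep} f\,\mathrm d\mu \le \int_\cX f\,\mathrm d\mu + m\,\mu(\cX)\le r+ms,
\]
so $\mu(\cX\setminus K_\ep)\le (r+ms)/M<\ep$ once $M$ is large enough. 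This gives uniform tightness, and combined with the uniform mass bound $\mu(\cX)\le s$, Prokhorov yields relative compactness in the weak topology.

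It remains to check that $\cR^+_{f,r,s}$ is closed under weak convergence, so that it is actually compact, not merely relatively compact. Suppose $\mu_j\in\cR^+_{f,r,s}$ converges weakly to $\mu$. The mass bound passes to the limit: testing against the functions $\chi_k$ from the proof of Lemma \ref{basic-lsc} (or any $\chi\in C_{\mathrm c}(\cX)$ with $0\le\chi\le1$) gives $\lan\mu,\chi_k\ran=\lim_j\lan\mu_j,\chi_k\ran\le s$, and letting $k\to\infty$ with monotone convergence gives $\mu(\cX)\le s$. For the constraint $\lan\mu,f\ran\le r$, I would invoke Lemma \ref{basic-lsc}: the functional $\nu\mapsto\lan\nu,f\ran$ is weakly lower semicontinuous, hence $\lan\mu,f\ran\le\liminf_j\lan\mu_j,f\ran\le r$. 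Therefore $\mu\in\cR^+_{f,r,s}$, and the set is weakly closed; a closed subset of a relatively compact set is compact.

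The main obstacle — really the only subtle point — is making sure the version of Prokhorov's theorem being used is valid in the stated generality: $\cX$ is only assumed $\sigma$-compact and locally compact (not Polish a priori, though $\sigma$-compact locally compact metric spaces are in fact Polish), and "weak convergence of measures" here should be understood as the vague/$C_{\mathrm c}$ topology together with control of total mass, so one must be careful that tightness plus mass bound genuinely upgrades vague precompactness to weak precompactness with no escape of mass to infinity. This is exactly what the tightness estimate above rules out, so once that estimate is in hand the argument closes cleanly. A reader wanting to avoid citing Prokhorov could instead argue directly by a diagonal extraction over the compact sets $K_m$ exhausting $\cX$, extracting a vaguely convergent subsequence and using tightness to identify the limit's total mass, but citing Prokhorov is the cleanest route.
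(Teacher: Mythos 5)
Your proof is correct and follows essentially the same route as the paper: coercivity of $f$ plus the bound $\lan\mu,f\ran\le r$ gives tightness, Prokhorov's theorem gives a weakly convergent subsequence (or relative compactness), and Lemma \ref{basic-lsc} plus the mass bound show the limit stays in $\cR^+_{f,r,s}$. The only cosmetic difference is that you phrase the argument as ``tight and closed implies compact'' while the paper argues directly by sequential extraction, and you write out the Chebyshev-type tightness estimate that the paper leaves implicit.
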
 

\begin{proof} Let $\{\mu_j\}_{j\in\N}$ be a sequence of 
measures in $\cR^+_{f,r,s}$. 
We need to show that there is a subsequence $\{\mu_{j_k}\}_{k\in\N}$ of $\{\mu_j\}$ that converges to some $\mu\in\cR^+_{f,r,s}$ 
weakly in the sense of measures.

Since $f$ is bounded from below, the sequence 
$\{\lan\mu_j,f\ran\}_{j\in\N}$ is bounded from below, 
and, hence, it is bounded.  
We may thus assume, by passing to a subsequence if needed, that 
the sequence $\{\lan\mu_j,f\ran\}_{j\in\N}$ is convergent.
Since $f=+\infty$ at infinity, the boundedness of the sequence 
$\{\lan\mu_j,f\ran\}_{j\in\N}$ and Chebyshev's inequality 
imply that the family 
$\{\mu_j\}_{j\in\N}$ is tight. Prokhorov's theorem 
guarantees that there is a subsequence $\{\mu_{j_k}\}_{k\in\N}$
of $\{\mu_j\}$ that converges to some $\mu\in\cR^+_\cX$ 
weakly in the sense of measures. 
The weak convergence of $\{\mu_{j_k}\}_{k\in\N}$ 
readily yields $\,\mu(\cX)\leq s$. 
Lemma \ref{basic-lsc} 
ensures that 
\[\lan\mu,f\ran\leq \lim_{k\to\infty}\lan\mu_{j_k},f\ran\leq r,
\]
which then shows that $\mu\in\cR^+_{f,r,s}$. The proof is complete.
\end{proof}

In the next lemma, we consider the case where $\cX=\bb$.

\begin{lem}\label{mod} Let $\mu\in\cR^+_{\bb}$, $m\in\R$ 
and $L\in C(\bb)$. Assume that $\cA$ is not compact, $\mu(\bb)>0$, $m>\lan \mu,L\ran$, and 
that $L$ is coercive. 
Then, there exists $\tilde\mu\in\cR^+_{\bb}$ such that 
\begin{align}
&\tilde\mu(\bb)=\mu(\bb),\tag{i}\\
&\lan\tilde\mu,L\ran=m,\tag{ii}
\\
&\lan\tilde\mu,\psi\ran=\lan\mu,\psi\ran \ \ \ \text{ for all }\ 
\psi\in C(\tt).\tag{iii}
\end{align}
\end{lem}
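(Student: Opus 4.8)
The plan is to construct $\tilde\mu$ by redistributing mass of $\mu$ in the $\cA$-direction only, keeping the $\tt$-marginal fixed so that (iii) holds automatically. Concretely, I would disintegrate $\mu$ over its marginal on $\tt$: writing $\nu(dx)$ for the projection of $\mu$ onto $\tt$ and $\{\mu_x\}_{x\in\tt}$ for the family of conditional probability measures on $\cA$, we have $\mu(dx\,d\ga)=\mu_x(d\ga)\,\nu(dx)$. Any new measure of the form $\tilde\mu(dx\,d\ga)=\tilde\mu_x(d\ga)\,\nu(dx)$ with each $\tilde\mu_x\in\cP_\cA$ automatically satisfies (i) (since $\tilde\mu(\bb)=\nu(\tt)=\mu(\bb)$) and (iii) (since the $\tt$-marginal is unchanged). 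So the whole problem reduces to choosing the conditionals $\tilde\mu_x$ so that $\int_\tt\bigl(\int_\cA L(x,\ga)\,\tilde\mu_x(d\ga)\bigr)\nu(dx)=m$.

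The simplest device for raising $\int L\,d\mu$ to a prescribed value $m$ is to push mass out toward infinity in $\cA$. Fix a point $x_0\in\tt$ in the support of $\nu$ (or work on a small neighborhood, using that $\nu(\tt)=\mu(\bb)>0$), and a sequence of points $\ga_k\in\cA$ with $L(x,\ga_k)\to+\infty$ uniformly in $x$ — this is exactly what coercivity \eqref{L} gives, since for any $M$ the set $\{L\geq M\}$ contains $\tt\times(\cA\setminus K)$, and $\cA\setminus K$ is nonempty because $\cA$ is not compact. For a parameter $t\in[0,1]$ and a far-away point $\ga_k$, consider the interpolated measure $\tilde\mu^{t,k}(dx\,d\ga)=(1-t)\mu(dx\,d\ga)+t\,\delta_{\ga_k}\otimes\nu(dx)$; equivalently $\tilde\mu^{t,k}_x=(1-t)\mu_x+t\delta_{\ga_k}$. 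This keeps properties (i) and (iii). Its $L$-integral is $g(t,k):=(1-t)\lan\mu,L\ran+t\int_\tt L(x,\ga_k)\,\nu(dx)$, which is continuous and increasing in $t$, equals $\lan\mu,L\ran<m$ at $t=0$, and for $k$ large enough exceeds $m$ at $t=1$ (since $\int_\tt L(x,\ga_k)\,\nu(dx)\geq M\,\nu(\tt)=M\,\mu(\bb)$, and $M$ can be taken arbitrarily large). Fixing such a $k$ and applying the intermediate value theorem in $t$ produces the desired $t^*$ with $g(t^*,k)=m$; set $\tilde\mu:=\tilde\mu^{t^*,k}$.

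One should double-check that $\tilde\mu\in\cR^+_\bb$ — i.e. Radon — but this is immediate since it is a finite nonnegative combination of $\mu$ and the finite measure $\delta_{\ga_k}\otimes\nu$. The only genuine subtlety is the disintegration step: one needs $\nu$ and the conditionals $\mu_x$ to exist, which holds because $\tt$ is a nice (Polish, in fact compact metric) space and $\mu$ is a finite Radon measure — or, alternatively, one can avoid disintegration altogether by noting that $\delta_{\ga_k}\otimes\nu$ is defined directly from the marginal $\nu=\mathrm{proj}_{\tt}\#\mu$, and checking (iii) for $\tilde\mu^{t,k}$ by a direct computation: for $\psi\in C(\tt)$, $\lan\delta_{\ga_k}\otimes\nu,\psi\ran=\lan\nu,\psi\ran=\lan\mu,\psi\ran$. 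I would present it this second way to keep the argument self-contained. The main obstacle — if there is one — is simply ensuring uniform-in-$x$ largeness of $L(\cdot,\ga_k)$ so that $\int_\tt L(x,\ga_k)\,\nu(dx)$ genuinely blows up; this is handled cleanly by coercivity as stated in \eqref{L}, since $M$ is arbitrary and $\cA\setminus K\neq\emptyset$ for every compact $K$ when $\cA$ is noncompact and $\sigma$-compact and locally compact (so it is not exhausted by any single compact set).
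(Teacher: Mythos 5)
Your proposal is correct and matches the paper's proof in essence: the measure $\delta_{\ga_k}\otimes\nu$ (with $\nu$ the $\tt$-marginal of $\mu$) is exactly the measure the paper constructs via Riesz as the pushforward of $\mu$ under $(x,\ga)\mapsto(x,\ga_1)$, and both arguments then form the convex combination $(1-t)\mu+t(\delta_{\ga_1}\otimes\nu)$ and choose $t\in(0,1)$ to hit $m$. The only cosmetic differences are that the paper picks a single $\ga_1$ with $\min_{\tt}L(\cdot,\ga_1)\cdot\mu(\bb)>m$ upfront and writes $t=(m-m_0)/(m_1-m_0)$ explicitly rather than invoking the intermediate value theorem along a sequence $\ga_k$; the disintegration preamble in your write-up is, as you yourself note, an unnecessary detour that the second, direct formulation cleanly avoids.
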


\begin{proof}
Put
$\, 
m_0:=\lan\mu,L\ran$,
and pick $\ga_1\in\cA$ so that 
\[
\min_{x\in\tt}L(x,\ga_1)\tim \mu(\bb)>m.
\]
Define the Radon measure $\nu$ on $\bb$, through the Riesz representation theorem, by requiring 
\[
\lan\nu,\phi\ran=\int_{\bb}\phi(x,\ga_1)\mu(dxd\ga) 
\ \ \ \text{ for }\ \phi\in C_{\rm c}(\bb),
\]
and note that 
\[
\nu(\bb)=\int_{\bb}\mu(dx d\ga)=\mu(\bb).
\]

We set 
\[
m_1:=\lan\nu,L\ran,
\]
observe that 
\[
m_1\geq \int_{\bb}\min_{\tt}L(\cdot,\ga_1)\mu(dxd\ga)
=\min_{\tt}L(\cdot,\ga_1)\mu(\bb)>m>m_0,
\]
and put
\[
t=\fr{m-m_0}{m_1-m_0}.
\]
It is clear that $t\in (0,\,1)$ and, if we set 
\[
\tilde\mu=(1-t)\mu+t\nu,
\]
then $\tilde\mu$ is a nonnegative Radon measure on $\bb$.

We observe 
\[
\tilde\mu(\bb)=(1-t)\mu(\bb)+t\nu(\bb)=\mu(\bb), 
\]
that
\[
\lan\tilde\mu,L\ran
=(1-t)\lan\mu,L\ran+t\lan\nu,L\ran
=(1-t)m_0+tm_1=m,
\]
and that for any $\psi\in C(\tt)$,
\[\begin{aligned}
\lan\tilde\mu,\psi\ran
&\,=(1-t)\lan\mu,\psi\ran
+t\int_{\bb}\psi(x)\nu(dxd\ga)
\\&\,=(1-t)\lan\mu,\psi\ran
+t\int_{\bb}\psi(x)\mu(dxd\ga)
=\lan\mu,\psi\ran.\end{aligned} 
\]
The proof is complete.
\end{proof}

\section{State constraint problem} \label{sec-s}

We consider  the state constraint boundary problem.
To avoid confusion, we rename discount equation (DP$_\lambda$) as (S$_\gl$), 
and ergodic equation (E) as (ES) in this section.
Here the letter S 
represents ``state constraint".
The two problems of interest are
\[\tag{S$_\gl$}\label{S} 
\begin{cases}
\gl u+F[u]\leq 0 \ \  \text{ in }\ \gO, &\\[3pt]
\gl u+F[u]\geq 0 \ \ \text{ on }\ \lbar\gO,
\end{cases}
\]
for $\gl>0$, and
\[\tag{ES}\label{ES} 
\begin{cases}
F[u]\leq c \ \  \text{ in }\ \gO, &\\[3pt]
F[u]\geq c \ \ \text{ on }\ \lbar\gO.
\end{cases}
\]
Given a constant $c$, we refer as (ES$_c$) the state constraint problem (ES).

We assume in addition the following assumptions, which concern 
with the comparison principle and solvability for \eqref{S}.
\[\tag{CPS}\label{CPS}
\left\{\text{
\begin{minipage}{0.85\textwidth}
The comparison principle holds for \eqref{S} for every $\gl>0$.
More precisely, if $v\in C(\ol \gO)$ is a subsolution of $\gl u + F[u] = 0$ in $\gO$,
and $w\in C(\ol \gO)$ is a supersolution of $\gl u+F[u] = 0$ on $\tt$,
then $v\leq w$ on $\tt$.  
\end{minipage}
}\right.
\]

\[\tag{SLS}\label{SLS}
\text{
\begin{minipage}{0.85\textwidth}
For any $\gl>0$, \eqref{S} admits a solution $v^\gl \in C(\tt)$.
\end{minipage}
}.
\]


\begin{prop}
Assume \eqref{F1}, \eqref{F2}, \eqref{CPS}, \eqref{SLS} and \eqref{EC}.
Then there exists a solution $(u,c)\in C(\tt)\times\R$ of \eqref{ES}. 
Moreover, the constant $c$ is uniquely determined by 
\begin{equation}\label{critical-value-s}
c=\inf\left\{d\in\R\mid \text{there exists} \ v\in C(\tt) \ \text{such that} \ 
F[v]\le d \ \text{in} \ \gO\right\}.  
\end{equation}
\end{prop}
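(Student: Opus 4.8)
The plan is to prove existence of a solution pair $(u,c)$ of \eqref{ES} by the standard vanishing-discount procedure, and then identify $c$ with the right-hand side of \eqref{critical-value-s} by a two-sided comparison argument. First I would take, for each $\gl>0$, the solution $v^\gl\in C(\tt)$ of \eqref{S} granted by \eqref{SLS}. The key preliminary step is the uniform bound $\sup_{\gl>0}\|\gl v^\gl\|_{C(\tt)}<\infty$: this follows by constructing suitable sub- and supersolutions of $\gl u+F[u]=0$ out of a fixed smooth function and applying \eqref{CPS} (for instance, for a large constant $C$ one checks that $\pm C/\gl$ are a super/subsolution up to controlling $F(x,0,0)$ on $\ol\gO$, which is bounded by \eqref{F2}). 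Combined with the equi-continuity hypothesis \eqref{EC}, the family $\{v^\gl - \min_{\tt} v^\gl\}$ is then equi-bounded and equi-continuous, so by Arzelà–Ascoli a subsequence $\gl_j\to0$ gives $v^{\gl_j}-\min_{\tt}v^{\gl_j}\to u$ in $C(\tt)$, while (again up to a subsequence) $\gl_j v^{\gl_j}\to -c$ uniformly for some constant $c\in\R$ — uniformity of the latter limit is what makes $c$ a constant, and it comes from the equi-continuity of $\{v^\gl\}$ together with the uniform bound on $\gl v^\gl$. Passing to the limit in the viscosity inequalities \eqref{S} using the stability of viscosity solutions under uniform convergence yields that $(u,c)$ solves \eqref{ES}.

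Next I would prove the representation formula \eqref{critical-value-s}. Denote by $c_*$ the infimum on the right-hand side. For the inequality $c\le c_*$: if $v\in C(\tt)$ satisfies $F[v]\le d$ in $\gO$ (in the viscosity sense), then $v$ is a subsolution of \eqref{S} with $\gl=0$ replaced appropriately — more precisely one shows $v - c\,t$-type comparisons, but the cleanest route is: for any such $d$, the pair $(v, d)$ makes $v$ a subsolution of $F[w]\le d$ in $\gO$, and then comparing $v$ with the supersolution-on-$\tt$ part of the ergodic solution $u$ (which satisfies $F[u]\ge c$ on $\tt$) via the comparison principle \eqref{CPS} adapted to the ergodic equation forces $d\ge c$; taking the infimum over admissible $d$ gives $c\le c_*$. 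For the reverse inequality $c_*\le c$: the function $u$ from the ergodic solution itself satisfies $F[u]\le c$ in $\gO$, so $c$ is an admissible value of $d$ in the definition of $c_*$, whence $c_*\le c$. This also shows the infimum defining $c_*$ is attained. Uniqueness of $c$ among all constants admitting an ergodic solution is then immediate: any such constant equals $c_*$ by the same two inequalities.

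The main obstacle I anticipate is the careful handling of the \emph{one-sided} nature of the state-constraint boundary condition when applying \eqref{CPS}. The comparison principle as stated requires the subsolution to satisfy the inequality only in $\gO$ but the supersolution to satisfy it on all of $\tt$ (including the boundary), so when I compare a generic subsolution $v$ of $F[v]\le d$ in $\gO$ against the ergodic solution $u$, I must check that $u$ genuinely has the supersolution property up to the boundary, i.e. $F[u]\ge c$ on $\ol\gO$ and not merely in $\gO$ — this is exactly the second line of \eqref{ES} and is preserved under the uniform limit, so the stability argument must be run with care at boundary points. A related subtlety is that \eqref{CPS} is stated for the equation $\gl u+F[u]=0$ with $\gl>0$; to use it for the ergodic (i.e. $\gl=0$) equation one should either invoke a standard perturbation (replacing $u$ by $u\pm\gep$ and using that $F$ does not see the additive constant in $u$, which shifts the equation by nothing — so one really needs a direct comparison argument for $F[v]\le d\le c\le F[w]$), or re-derive the needed comparison from \eqref{CP} applied with $\phi\in\Phi^+$ chosen to absorb the constants. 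I would make this rigorous by noting that if $F[v]\le d$ in $\gO$ and $F[u]\ge c$ on $\tt$ with $d<c$, then $v$ and $u$ would be strict sub/supersolutions of a common equation, and a contradiction with the maximum of $v-u$ being attained follows from the degenerate-elliptic structure \eqref{F1}–\eqref{F2}; this is the technical heart and the place where the convexity/structure of $F$ is actually used.
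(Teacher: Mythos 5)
There is a genuine gap at what you yourself call the technical heart. To prove that every admissible $d$ in \eqref{critical-value-s} satisfies $d\geq c$, you propose, after correctly observing that adding constants creates no strictness because the ergodic equation has no zeroth-order term, to run a ``direct comparison argument'' at the maximum of $v-u$ using only the degenerate-elliptic structure \eqref{F1}--\eqref{F2}. This step would fail: for second-order degenerate elliptic equations, comparison between a (strict) viscosity subsolution and supersolution does not follow from continuity and degenerate ellipticity alone (one needs doubling-of-variables plus structure conditions in $x$, which \eqref{F1}--\eqref{F2} do not provide), and the state-constraint boundary points, where the maximum of $v-u$ may well sit, are exactly where the assumed principle \eqref{CPS} is doing nontrivial work; no comparison at $\gl=0$ is assumed anywhere. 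The missing idea is to convert the strict gap $d<c$ into a small positive discount: after adding a constant to $v$ so that $v>u$ on $\tt$, choose $\ep>0$ with $\ep\|v\|_{C(\tt)},\,\ep\|u\|_{C(\tt)}\leq (c-d)/2$; then $v-(c+d)/(2\ep)$ is a subsolution of $\ep w+F[w]=0$ in $\gO$ and $u-(c+d)/(2\ep)$ is a supersolution on $\tt$, so \eqref{CPS} (valid for the discount $\ep>0$) gives $v\leq u$, a contradiction. This reduction to \eqref{S} with $\gl=\ep$ is what makes the argument go through under the stated hypotheses.

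A second, smaller flaw is in your preliminary bound on $\{\gl v^\gl\}$: the constant $+C/\gl$ is in general \emph{not} a supersolution of \eqref{S}, because the supersolution inequality must hold up to $\bry$ in the state-constraint sense, where admissible test functions have outward-pointing gradients of arbitrary size (and arbitrarily negative normal Hessians), and nothing in \eqref{F1}--\eqref{F2} bounds $F(x,p,X)$ from below along such data (think of a control whose drift points outward with no diffusion in the normal direction). The constant $-C/\gl$ is fine as a subsolution, but for the upper bound one should instead use barriers built from an actual solution, e.g. $v^1\pm(1+\gl^{-1})\|v^1\|_{C(\tt)}$, which inherit the supersolution property on all of $\tt$ from $v^1$ and then yield the bound on $\gl\min_{\tt}v^\gl$ via \eqref{CPS}. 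With these two repairs, the rest of your plan (Arzel\`a--Ascoli via \eqref{EC}, stability in the limit, admissibility of $c$ giving $c_*\leq c$, and uniqueness of the critical constant) is sound and matches the intended argument.
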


We denote the constant given by \eqref{critical-value-s} by $c_{\rS}$
and call it the critical value of \eqref{ES}.

The proof of the proposition above is somehow standard,
and we give only its outline. 

\begin{proof} For each $\gl>0$ let $v^\gl\in C(\tt)$ be a (unique) solution of \eqref{S} and set $u^\gl:=v^\gl-m_\gl$, 
where $m_\gl:=\min_{\tt}v^\gl$. By \eqref{EC}, the family 
$\{u^\gl\}_{\gl>0}$ is relatively compact in $C(\tt)$.

Set $M=\|v^1\|_{C(\tt)}$,
and observe that for any $\gl>0$, 
the functions $v^1+(1+\gl^{-1})M$ and $v^{1}-(1+\gl^{-1})M$ are 
a supersolution and a subsolution of \eqref{S}, respectively. By the comparison principle \eqref{CPS}, we find that for any $\gl>0$, 
\[
v^{1}-(1+\gl^{-1})M\leq v^\gl\leq v^1+(1+\gl^{-1})M \ \ \text{ on }\tt,
\]
which implies that the collection 
$\{\gl m_\gl\}_{\gl>0}\subset\R$ 
is bounded. 

We may now choose a sequence $\{\gl_j\}_{j\in\N}$ converging to 
zero such that $\{u^{\gl_j}\}_{j\in\N}$ converges to 
a function $u\in C(\tt)$ and $\{\gl_j m_{\gl_j}\}_{j\in\N}$ 
converges to a number $-c^*$. Since $u^{\gl_j}$ is a solution 
of $\gl_j(u^{\gl_j}+m_{\gl_j})+F[u^{\gl_j}]\geq 0$ on $\tt$ 
and $\gl_j(u^{\gl_j}+m_{\gl_j})+F[u^{\gl_j}]\leq 0$ in $\gO$,
we conclude in the limit $j\to\infty$ 
that $(u,c^*)$ is a solution of \eqref{ES}.  

Next we prove formula \eqref{critical-value-s}. We write $d^*$ 
the right 
side of \eqref{critical-value-s}. Since $u$ is a solution of 
(\ref{ES}$_{c^*}$), we get $\,d^*\leq c^*$. 
To show that $d^*\geq c^*$, we suppose $d^*<c^*$, and select 
$(v,d)\in C(\tt)\tim\R$ so that $v$ is a subsolution of 
$F[v]=d$ in $\gO$. By adding $v$ a constant if necessary, we may assume that $v>u$ on $\tt$. We observe that if $\ep>0$ is sufficiently small, then $u$ and $v$ are a supersolution
and a subsolution of \eqref{S}, with $\gl$ replaced by $\ep$ 
and $0$ on its right side replaced by $(c+d)/2$, which means 
that 
the functions $u-(c+d)/(2\ep)$ and $v-(c+d)/(2\ep)$ are 
a supersolution
and a subsolution of \eqref{S}, with $\gl$ replaced by $\ep$. 
By the comparison principle, we get 
\[
v-\fr{c+d}{2\ep}\leq u-\fr{c+d}{2\ep} \ \ \ \text{ on }\ \tt,
\] 
but this is a contradiction, which shows that $d^*\geq c^*$.
\end{proof}

By normalization (replacing $F$ and $L$ by $F-c_{\rS}$ 
and $L+c_{\rS}$, respectively), we often assume 
\begin{equation}\tag{Z}\label{Z}
\text{
\begin{minipage}{0.85\textwidth}
the 
critical value 
$c_{\rS}$ 
of \eqref{ES}  
is zero. 
\end{minipage}
}\end{equation}

\subsection{Representation formulas}
We assume \eqref{Z} in this subsection.

For $z\in \ol \gO$ and $\gl \geq 0$,
we define the sets 
$\cF^{\rS}(\gl)\subset C(\tt\tim\cA)\times C(\tt)$ 
and 
$\cG^{\rS}(z,\gl)\subset C(\tt)$, respectively, as 
\begin{align*}
&\cF^{\rS}(\gl):=\left\{(\phi,u)\in \Phi^+\tim C(\tt)\mid u \ \text{ is a subsolution of } \ \gl u + F_\phi[u] =0 \ \text{ in } \ \gO \right\}, \\
&\cG^{\rS}(z,\gl):=\left\{\phi-\gl u(z)\mid (\phi,u)\in
\cF^{\rS}(\gl) \right\}.
\end{align*}
As $\cG^{\rS}(z,0)$ is independent of $z$, we also write 
$\cG^{\rS}(0)$ for $\cG^{\rS}(z,0)$ for simplicity.

\begin{lem}\label{thm3-sec3}Assume \eqref{F1}, \eqref{F2} 
and \eqref{CP}. 
For any $(z,\gl)\in\tt\tim[0,\infty)$, the set $\cG^{S}(z,\gl)$ is a convex cone with vertex at the origin.
\end{lem}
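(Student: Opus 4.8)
The plan is to verify the two defining properties of a convex cone with vertex at the origin directly from the definitions of $\cF^{\rS}(\gl)$ and $\cG^{\rS}(z,\gl)$, using the scaling identity $F_\phi(x,p,X)=tF(x,t^{-1}p,t^{-1}X)-\chi(x)$ for $\phi=tL+\chi$ established in the introduction. Recall that an element of $\cG^{\rS}(z,\gl)$ has the form $\phi-\gl u(z)$ where $\phi=tL+\chi\in\Phi^+$ and $u\in C(\tt)$ is a subsolution of $\gl u+F_\phi[u]=0$ in $\gO$. Note first that $\cG^{\rS}(z,\gl)$ is nonempty and contains the origin: taking $t>0$ and $u$ constant, $u\equiv t^{-1}\|L^-\|$ or the like, one checks $0\in\cG^{\rS}(z,\gl)$ (for $\gl=0$ use \eqref{Z}, which gives a subsolution of $F[u]\le 0$; for $\gl>0$ one scales and adjusts by a constant), so the vertex is at the origin.

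For the cone property, let $\psi=\phi-\gl u(z)\in\cG^{\rS}(z,\gl)$ with $\phi=tL+\chi$, and let $r>0$. I would show $r\psi\in\cG^{\rS}(z,\gl)$ by exhibiting the pair $(r\phi, ru)$: clearly $r\phi=(rt)L+r\chi\in\Phi^+$, and I claim $ru$ is a subsolution of $\gl w+F_{r\phi}[w]=0$ in $\gO$. This follows because $F_{r\phi}(x,rp,rX)=rt\,F(x,(rt)^{-1}rp,(rt)^{-1}rX)-r\chi(x)=r\bigl(tF(x,t^{-1}p,t^{-1}X)-\chi(x)\bigr)=rF_\phi(x,p,X)$, so $F_{r\phi}[ru]=rF_\phi[u]$ in the viscosity sense (positive homogeneous scaling of the solution preserves the subsolution property under this matching scaling of the nonlinearity — this is a routine test-function computation), and hence $\gl(ru)+F_{r\phi}[ru]=r(\gl u+F_\phi[u])\le 0$. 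Therefore $r\phi-\gl(ru)(z)=r(\phi-\gl u(z))=r\psi\in\cG^{\rS}(z,\gl)$.

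For convexity, let $\psi_i=\phi_i-\gl u_i(z)\in\cG^{\rS}(z,\gl)$ with $\phi_i=t_iL+\chi_i$ and $u_i$ a subsolution of $\gl u+F_{\phi_i}[u]=0$ in $\gO$, for $i=1,2$, and let $\theta\in(0,1)$. Here the key point is that the set of subsolutions is closed under the convexity of $(p,X)\mapsto F(x,p,X)$: writing $\phi:=\theta\phi_1+(1-\theta)\phi_2=(\theta t_1+(1-\theta)t_2)L+(\theta\chi_1+(1-\theta)\chi_2)\in\Phi^+$ and $u:=\theta u_1+(1-\theta)u_2$, one has $F_\phi(x,p,X)\le\theta F_{\phi_1}(x,p,X)+(1-\theta)F_{\phi_2}(x,p,X)$ pointwise (each $F_{\phi_i}$ is a sup of affine functions of $(p,X,\phi_i)$, and $\phi\mapsto F_\phi$ together with $(p,X)\mapsto F_\phi(x,p,X)$ is jointly convex), whence $u$ is a subsolution of $\gl u+F_\phi[u]=0$ in $\gO$ — this is the standard fact that a convex combination of subsolutions of a convex elliptic equation is again a subsolution, which I would justify by the usual doubling-of-variables / sup-convolution argument or cite from the viscosity solutions literature. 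Then $\phi-\gl u(z)=\theta(\phi_1-\gl u_1(z))+(1-\theta)(\phi_2-\gl u_2(z))=\theta\psi_1+(1-\theta)\psi_2$ lies in $\cG^{\rS}(z,\gl)$.

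The main obstacle is the claim that convex combinations (and positive scalings) of viscosity subsolutions of the $\phi$-dependent equations are again subsolutions of the equation associated to the combined $\phi$. Since $u_1,u_2$ are only continuous, one cannot add the equations pointwise; the clean route is to use that $F_\phi$ is convex in $(p,X)$ and is obtained as a supremum of affine functions, combined with the fact (valid for degenerate elliptic equations that are convex in the Hessian) that the pointwise maximum/convex combination of subsolutions is a subsolution — proved via inf/sup-convolutions or via the theorem on sums. Apart from this, everything is bookkeeping with the explicit form of $\Phi^+$ and the scaling identity already recorded in the paper. Note also that assumption \eqref{CP} is not actually needed for this lemma — it is listed among the hypotheses but the argument only uses \eqref{F1}, \eqref{F2}, and the structure of $\Phi^+$; I would remark on this or simply carry the hypothesis as stated.
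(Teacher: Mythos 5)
Your scaling argument (positive multiples) and the algebraic identity $F_{r\phi}[ru]=rF_\phi[u]$ are fine and match what is needed. The problem is that the entire content of the lemma sits in the step you defer: that for merely continuous viscosity subsolutions $u_1,u_2$ of $\gl u+F_{\phi_1}[u]=0$ and $\gl u+F_{\phi_2}[u]=0$, the combination $\theta u_1+(1-\theta)u_2$ is a subsolution of $\gl u+F_{\theta\phi_1+(1-\theta)\phi_2}[u]=0$. You call this ``the standard fact that a convex combination of subsolutions of a convex elliptic equation is again a subsolution'' and propose to justify it by sup-convolution/doubling or by citation, but in the present generality this is not a routine quotable fact: $F$ is only continuous (\eqref{F2}), degenerate elliptic, and given by a supremum over a possibly non-compact $\cA$ with merely continuous coefficients, so there is no modulus of continuity in $x$ that is uniform over unbounded Hessians. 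Concretely, in the sup-convolution route the a.e.\ inequalities for $u_1^\ep$ and $u_2^\ep$ hold with \emph{different} shifted base points $y_1(x)\neq y_2(x)$, and without uniform continuity of $F_\phi(\cdot,p,X)$ in $x$ uniformly in $(p,X)$ you cannot recombine them through the convexity of $(p,X,\phi)\mapsto F_\phi(x,p,X)$, which requires a common $x$-argument. So the crucial step is left as a genuine gap, not a bookkeeping detail.

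This also explains why your closing remark that \eqref{CP} is superfluous is not warranted: the paper's proof (it is omitted here because it parallels \cite[Lemma 2.8]{IsMtTr1}, and the same scheme is written out in this paper for the Neumann case in Lemma \ref{convexN}) uses the localized comparison principle \eqref{CP} exactly to establish the convexity step. The argument there is: if $u^t-\eta$ has a strict maximum at $z\in\gO$ but $F_{\phi^t}[\eta](z)>0$, choose a small neighborhood $V$ on which $F_{\phi^t}[\eta]>\ep$, show that $w:=-t^{-1}(1-t)u_2+t^{-1}\eta$ is a supersolution of $F_{\phi_1}[w]=\ep$ in $V$ (converting test functions for $w$ into test functions for $u_2$ and using the convexity inequality $F_{\phi^t}[\eta]\le tF_{\phi_1}[\xi]+(1-t)F_{\phi_2}[(1-t)^{-1}\eta-t(1-t)^{-1}\xi]$), and then invoke \eqref{CP} on $V$ to compare $u_1$ with $w$, contradicting the strict maximum. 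If you want a proof that avoids \eqref{CP}, you must actually carry out (not merely cite) a sup-convolution/Jensen argument and confront the $x$-shift issue above, which is precisely what the hypothesis \eqref{CP} lets the authors bypass. A minor additional slip: your claim that $0\in\cG^{\rS}(z,\gl)$ is false in general, since $\phi-\gl u(z)\equiv 0$ would force $tL(x,\ga)+\chi(x)$ to be independent of $\ga$; ``cone with vertex at the origin'' here only means stability under positive scalings and sums, so this paragraph should be dropped.
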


The proof of this lemma parallels 
that of \cite[Lemma 2.8]{IsMtTr1} and hence is skipped.

For $(z,\gl)\in \tt \tim[0,\,\infty)$, we define the set 
$\cG^{\rS}(z,\gl)'\subset\cR_{L}$ (denoted also by $\cG^{\rS}(0)'$ if $\gl=0$) by 
\[
\cG^{\rS}(z,\gl)':=\{\mu\in \cR_{L}\mid \lan\mu,f\ran\geq 0
\ \ \text{ for all }f\in\cG^{\rS}(z,\gl)\}.  
\]
The set $\cG^{\rS}(z,\gl)'$ is indeed the dual cone of 
$\cG^{\rS}(z,\gl)$ in $\cR_L$.

Set $\cP^\rS:=\cP_{\bb}$, 
and, for any compact subset $K$ of $\cA$, let $\cP_K^\rS$ denote 
the subset of all 
probability measures $\mu\in\cP^\rS$ that have support 
in $\t\tim K$, that is, 
\[
\cP^\rS_K:=\{\mu\in\cP^\rS\mid \mu(\tt\tim K)=1\}.
\]

\begin{thm} \label{thm1-sc} Assume \eqref{F1}, \eqref{F2}, 
\eqref{CP}, \eqref{L}, \eqref{CPS}, \eqref{SLS}, \eqref{EC},
and, if $\gl = 0$, \eqref{Z} as well. 
Let $(z,\gl)\in\tt\tim[0,\,\infty)$ and let $v^\gl\in C(\tt)$ be a 
solution of \eqref{S}. Then   
\begin{equation} \label{sc-min}
\gl v^\gl(z)=\inf_{\mu\in \cP^\rS\cap \cG^{\rS}(z,\gl)'}\lan \mu,L\ran.
\end{equation}
\end{thm}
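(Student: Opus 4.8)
The identity \eqref{sc-min} is a min–max duality statement, and the natural strategy is to prove the two inequalities separately, with the ``$\leq$'' direction coming from an explicit test measure and the ``$\geq$'' direction coming from a Hahn–Banach / bipolar argument in the pairing between $C(\tt\tim\cA)$ and $\cR_L$. First I would record the easy bookkeeping: by Lemma \ref{thm3-sec3} the set $\cG^{\rS}(z,\gl)$ is a convex cone with vertex at the origin, so $\cG^{\rS}(z,\gl)'$ is its dual cone, and the constraint $\mu\in\cP^\rS\cap\cG^{\rS}(z,\gl)'$ is the intersection of this dual cone with the probability measures on $\bb$. The coercivity assumption \eqref{L} is what makes $\lan\mu,L\ran$ well-defined in $(-\infty,+\infty]$ on all of $\cP^\rS$ and lower semicontinuous there (Lemma \ref{basic-lsc}), and it also gives the compactness in Lemma \ref{basic-cpt} that I will need to know the infimum is attained or at least approached along a tight sequence.

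For the inequality $\gl v^\gl(z)\geq \inf_{\mu}\lan\mu,L\ran$, the point is that the pair $(\phi,u)=(L,v^\gl)$ itself lies in $\cF^{\rS}(\gl)$ (since $v^\gl$ solves, hence subsolves, $\gl u+F[u]=0$ in $\gO$ and $F=F_L$), so the function $f_0:=L-\gl v^\gl(z)\in\cG^{\rS}(z,\gl)$. Therefore every admissible $\mu\in\cP^\rS\cap\cG^{\rS}(z,\gl)'$ satisfies $\lan\mu,f_0\ran\geq 0$, i.e.\ $\lan\mu,L\ran\geq \gl v^\gl(z)\,\mu(\bb)=\gl v^\gl(z)$ because $\mu$ is a probability measure. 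Taking the infimum over such $\mu$ gives one inequality for free; the only subtlety is to make sure the admissible set is nonempty, which I would handle by exhibiting one concrete probability measure in the dual cone (e.g.\ built from the state-constraint control problem associated with $v^\gl$, or — if that is delicate — by a compactness/limiting argument showing the dual cone meets $\cP^\rS$), otherwise the infimum is $+\infty$ and the asserted equality could fail.

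The substantive direction is $\gl v^\gl(z)\leq \inf_{\mu}\lan\mu,L\ran$, and this is where the main obstacle lies. I would argue by contradiction: suppose $\inf_{\mu\in\cP^\rS\cap\cG^{\rS}(z,\gl)'}\lan\mu,L\ran > \gl v^\gl(z) + \gep$ for some $\gep>0$. Equivalently, the constant function $\gl v^\gl(z)$ cannot be improved: I want to produce $\phi\in\Phi^+$ and a subsolution $u$ of $\gl u+F_\phi[u]=0$ in $\gO$ with $\phi-\gl u(z)$ negative ``on average'' against every probability measure, i.e.\ strictly below $\gl v^\gl(z)$ in a suitable sense — and then derive a contradiction with the definition of $\cG^{\rS}(z,\gl)$ and the comparison principle \eqref{CPS}. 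The clean way is a separation argument: the sets $\cP^\rS\cap\cG^{\rS}(z,\gl)'$ and the relevant half-space are disjoint convex sets in the space of measures, so by Hahn–Banach there is a continuous function $g\in C(\bb)$ (after truncation and a limiting step using coercivity to stay inside $\cR_L$) separating them; pairing $g$ back against the cone $\cG^{\rS}(z,\gl)$ and its dual, the bipolar theorem forces $g-\gl v^\gl(z)\in \overline{\cG^{\rS}(z,\gl)}$, which upon unwinding the definition produces a subsolution of a discounted $F_\phi$-equation whose value at $z$ exceeds $v^\gl(z)$, contradicting \eqref{CPS} applied to $v^\gl$ and this subsolution. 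The technical heart is controlling the truncation: $C(\bb)$ is not the full dual partner of $\cR_L$, so I would approximate $L$ by bounded continuous functions $L_k\uparrow L$, run the separation in the bounded setting, and pass to the limit using Lemma \ref{basic-lsc} and the tightness from Lemma \ref{basic-cpt}; checking that the limiting object still lies in $\Phi^+$ (rather than merely its closure) and still yields a bona fide viscosity subsolution in $\gO$ is the delicate point, and I expect it to mirror closely the corresponding argument in \cite[Section 2]{IsMtTr1}, now with the state-constraint boundary condition playing no role in $\gO$ itself but entering through \eqref{CPS} and \eqref{SLS}.
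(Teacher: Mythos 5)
Your two inequalities are correctly targeted but the labels are swapped: the argument you describe as proving ``$\gl v^\gl(z)\geq\inf$'' (pairing $\mu$ against $f_0=L-\gl v^\gl(z)\in\cG^\rS(z,\gl)$, giving $\lan\mu,L\ran\geq\gl v^\gl(z)$ for every admissible $\mu$) in fact yields $\gl v^\gl(z)\leq\inf_\mu\lan\mu,L\ran$, exactly as in the paper's \eqref{thm1-sc-1}. The substantive direction is therefore $\geq$, not $\leq$, and your contradiction hypothesis $\inf>\gl v^\gl(z)+\ep$ is correctly matched to it even though you misstate the goal.

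For that hard direction the paper does not use a Hahn--Banach/bipolar argument but Sion's minimax theorem, and the reason this works is a reduction that your proposal omits: the paper first fixes a compact $K\subset\cA$ (using \eqref{L} and an auxiliary point $\ga_1$) and proves $\gl v^\gl(z)\geq\inf_{\mu\in\cP_{K_1}\cap\cG^\rS(z,\gl)'}\lan\mu,L\ran$, which combined with $\cP_{K_1}\subset\cP^\rS$ and the easy inequality closes the loop. The compactness of $\cP_{K_1}$ is what legitimizes interchanging min and sup. From the minimax identity one then extracts an \emph{actual} pair $(\phi,u)\in\cF^\rS(\gl)$ with $\phi=tL+\chi$ and shows, after a careful case split $t\leq 1$ vs.\ $t\geq 1$, that $w:=\gth u$ (with $\gth=1$ or $\gth=1/t$) is a subsolution of a strictly perturbed discounted equation, whence \eqref{CPS} gives the contradiction. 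Your bipolar route bypasses this compactification entirely and, as you yourself flag, lands only in the closure $\overline{\cG^\rS(z,\gl)}$; converting that into a genuine element of $\Phi^+$ and a genuine viscosity subsolution is precisely the step your sketch does not carry out, and it is not obvious it can be carried out without reintroducing the tightness/compactness machinery. So the overall architecture (easy inequality via duality, hard inequality via contradiction and comparison principle) matches the paper, but the key mechanism for the hard direction is different, and your version leaves the decisive step at the level of an expectation rather than a proof.
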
 
 
The proof of this theorem is similar to that of \cite[Theorem 3.3]{IsMtTr1}. However, we present it here, 
because it is a key component of the main result.

\begin{proof} We note that, thanks to \eqref{L}, the term 
\[
\max_{x\in\tt}\min_{\ga\in\cA} L(x,\ga)
\] 
defines a real number.  We choose first a constant 
$L_\gl\in\R$ so that 
\begin{equation}\label{thm1-sc-m2}
L_\gl\geq \max_{x\in\tt}\{\gl v^\gl(x),\min_{\ga\in\cA}L(x,\ga)\},
\end{equation}
and then, in view of \eqref{L}, a compact subset $K_\gl$ of $\cA$ so that 
\begin{equation}\label{thm1-sc-m1}
L(x,\ga)\geq L_\gl \ \ \ \text{ for all }\ (x,\ga)\in\tt
\tim(\cA\setminus K_\gl).
\end{equation}  
We fix a compact set $K\subset\cA$ 
so that $K=\cA$ if $\cA$ is compact,  
or, otherwise, $K\supsetneq K_\gl$.

According to the definition of $\cG^{\rS}(z,\gl)'$, since $(L,v^\gl)\in\cF^{\rS}(\gl)$, we have 
\[
0\leq \lan \mu,L-\gl v^\gl(z)\ran=
\lan \mu,L\ran-\gl v^\gl(z)\ \ \text{ for all }\mu\in\cP^\rS\cap \cG^\rS(z,\gl)',
\]
and, therefore,
\begin{equation}\label{thm1-sc-1}
\gl v^\gl(z)\leq \inf_{\mu\in\cP^\rS\cap \cG^\rS(z,\gl)'}\lan\mu,\,L\ran.
\end{equation} 

Next, we show  
\begin{equation}\label{thm1-sc-1+}
\gl v^\gl(z)\geq \inf_{\mu\in\cP_K\cap \cG^{\rS}(z,\gl)'}\lan\mu,\,L\ran,
\end{equation}
which is enough to prove \eqref{sc-min}. 

We put $K_1:=\cA$ if $K_\gl=\cA$, 
or else, pick $\ga_1 \in K \setminus K_\gl$ and put $K_1 := K_\gl \cup \{\ga_1\}$.
Clearly, $K_1$ is compact and $K_1 \subset K$. 
To prove \eqref{thm1-sc-1+}, we need only to show
\begin{equation}\label{thm1-sc-1++}
\gl v^\gl(z)\geq \inf_{\mu\in\cP_{K_1}\cap \cG^{\rS}(z,\gl)'}\lan\mu,\,L\ran.
\end{equation}

Suppose by contradiction that \eqref{thm1-sc-1++} 
is false, which means that
\[
\gl v^\gl(z)<\inf_{\mu\in\cP_{K_1}\cap \cG^{\rS}(z,\gl)'}\lan \mu, L\ran.
\]
Pick $\ep>0$ sufficiently small such that
\begin{equation} \label{thm1-sc-2}
\gl v^\gl(z) + \ep<\inf_{\mu\in\cP_{K_1}\cap 
\cG^{\rS}(z,\gl)'}\lan \mu, L\ran.
\end{equation}

Since $\cG^{\rS}(z,\gl)$ is a convex cone with vertex at the origin, 
we infer that 
\[
\inf_{f\in\cG^{\rS}(z,\gl)}
\lan \mu,f\ran=
\begin{cases} 0 \ \ &\text{ if }\ \mu\in\cP_{K_1} \cap\cG^{\rS}(z,\gl)', \\
-\infty &\text{ if }\ \mu\in \cP_{K_1} \setminus
\cG^{\rS}(z,\gl)'.
\end{cases}
\]
and, hence, the right hand side of \eqref{thm1-sc-2} can be rewritten as 
\begin{equation}\begin{aligned}
\label{thm1-sc-2+} 
\inf_{\mu\in\cP_{K_1} \cap \cG^{\rS}(z,\gl)'}\lan \mu, L\ran
&\,=
\inf_{\mu\in\cP_{K_1}}\
\Big(\lan \mu, L\ran
- \inf_{f\in\cG^{\rS}(z,\gl)}\lan\mu,f\ran
\Big)
=\inf_{\mu\in\cP_{K_1}}\
\sup_{f\in\cG^{\rS}(z,\gl)}\,\lan \mu, L-f\ran.
\end{aligned}
\end{equation} 
Since $\cP_{K_1}$ is a convex compact space, with the topology of weak convergence of 
measures, 
we apply Sion's minimax theorem, to 
get
\[
\min_{\mu\in\cP_{K_1}}
\ \sup_{f\in\cG^{\rS}(z,\gl)}\,\lan \mu, L-f\ran
= \sup_{f\in\cG^{\rS}(z,\gl)}\ \min_{\mu\in\cP_{K_1}}
\,\lan \mu, L-f\ran. 
\]
In view of this and \eqref{thm1-sc-2}, we can pick $(\phi,u) \in \cF^{\rS}(\gl)$ such that 
\begin{equation}\label{thm1-sc-3}
\gl v^\gl(z)+\ep < \lan \mu, L-\phi + \gl u(z)  \ran \ \ \text{ for all } \mu \in \cP_{K_1},
\end{equation}
and $\phi=tL+\chi$ for some $t>0$ and $\chi \in C(\ol \gO)$.

We now prove that there exists $\gth>0$ such that $w:=\gth u$ is a 
subsolution of 
\begin{equation}\label{thm1-sc-4}
\gl w+F[w]=-\gl (v^\gl-w)(z)-\gth\ep\ \ \text{ in }\gO.
\end{equation}  
Once this is done, 
we immediately arrive at a contradiction.
Indeed, if $\gl>0$, then 
$\gz:=w+{(v^\gl-w)(z)}+\gl^{-1}\gth\ep$ is a subsolution of $\gl \gz+F[\gz]=0$ in $\gO$,
and comparison principle \eqref{CPS} 
yields $\gz\leq v^\gl$, which, after evaluation at $z$, gives 
$\,\gl^{-1}\gth\ep\leq 0$, a contradiction. On the other hand, if $\gl=0$, then we 
set $\gz:=w+C$, with a constant $C>\min_{\tt}(v^\gl-w)$, 
choose a constant $\gd>0$ 
sufficiently small so that 
\[
\gd\min_{\tt}v^\gl\geq -\gth\ep/2 \ \ \text{ and } \ \ \gd\max_{\tt} \gz\leq \gth\ep/2,
\] 
observe that the functions 
$\gz$ 
and $v^\gl$ are 
a subsolution and a supersolution of $\gd u+F[u]=-\gth\ep/2$, respectively, in $\gO$ 
and on $\tt$, and, by \eqref{CPS}, get $\gz\le v^\gl$ on $\tt$, 
which is a contradiction.

To show \eqref{thm1-sc-4}, 
we consider the two cases separately.  
The first case is when $K_1=\cA$, and then, 
the Dirac measure $\gd_{(x,\ga)}$ belongs to $\cP_{K_1}$ 
for any $(x,\ga) \in \ol \gO \times \cA$,
which together with \eqref{thm1-sc-3} 
yields 
\[
\gl v^\gl(z) + \ep < L - \phi + \gl u(z) \quad \text{on } \ol \gO \times \cA.
\]
Now, we have 
\[
F(x,p,X) \leq F_\phi(x,p,X) + \gl(u-v^\gl)(z) - \ep \quad \text{for} \ (x,p,X) \in \tt \times \R^n \times \bS^n.
\]
We choose $\theta=1$ and observe that $w=u$ is a 
subsolution of \eqref{thm1-sc-4}.

The other case is that when $K_1=K_\gl \cup\{\ga_1\}$, with 
$\ga_1 \in K \setminus K_\gl$.
As $\gd_{(x,\ga)}\in\cP_{K_1}$ for all $(x,\ga)\in\tt\tim K_1$, we observe, in light of \eqref{thm1-sc-3}, that
\begin{equation}\label{thm1-sc-6}
\gl v^\gl(z)+\ep<(1-t)L(x,\ga)-\chi(x)+\gl u(z) \ \ \text{ for all }(x,\ga)\in\tt\tim K_1. 
\end{equation}

We subdivide the argument into two cases. 
Consider first the case when $t \leq 1$. 
Minimize both sides of \eqref{thm1-sc-6} in $\ga \in K_\gl$ 
and note by \eqref{thm1-sc-m2} and \eqref{thm1-sc-m1} that
$\min_{\ga\in K_\gl}L(x,\ga)\leq L_\gl$, to get
\[\begin{aligned}
\gl v^\gl(z)+\ep&\,<(1-t)\min_{\ga \in K_\gl} L(x,\ga)-\chi(x)+\gl 
u(z)  
\\&\,\leq (1-t)L_\gl-\chi(x)+\gl 
u(z) \ \ \text{ for all }  x \in\tt.
\end{aligned}\]
We use this, \eqref{thm1-sc-m1}, and \eqref{thm1-sc-6}, to deduce that
\[
\gl v^\gl(z)+\ep<(1-t)L(x,\ga)-\chi(x)+\gl 
u(z) \ \ \text{ for all }(x,\ga)\in\tt\tim \cA. 
\] 
From this, we observe
\[
\phi =L+(t-1)L+\chi<L-\gl (v^\gl-u)(z)-\ep \ \text{ on }\bb,
\]
and, hence, that $u$ is a subsolution of \eqref{thm1-sc-4}, with $\gth=1$.

Next, we consider the case when $t \geq 1$. 
By \eqref{thm1-sc-m1} and \eqref{thm1-sc-6}, we get
\[L(x,\ga_1) \geq L_\gl \ \ \text{ and } \ \  
(t-1)L(x,\ga_1)+\chi(x)<-\gl (v^\gl-u)(z)-\ep \ \ \text{ for all }x\in\tt, 
\]
which together yield
\[
(t-1) L_\gl + \chi(x) \leq -\gl(v^\gl-u)(z) - \ep \ \ \text{ for all }  x \in \tt.
\]
We take advantage of \eqref{thm1-sc-m2} to get further that
\[
(t-1) \gl v^\gl(z) + \chi(x) \leq -\gl(v^\gl-u)(z) - \ep \ \ \text{ for all } x \in\tt.
\]
Therefore,
\[
\chi < - t \gl v^\gl(z) + \gl u(z) - \ep,
\]
and
\[
\phi=tL+\chi < tL + t\gl (u/t-v^\gl)(z) - \ep \ \ \text{ in } \tt \times \cA.
\]
From this we deduce that $w:=u/t$ is a subsolution of 
\[
\gl w +F[w]=-\gl(v^\gl-w)(z)-\ep/t \ \ \ \text{ in }\gO.
\]
This completes the proof. 
\end{proof}

We remark that, in the proof above, we have proven the identity
\[
\gl v^\gl(z)=\inf_{\mu\in\cP_K\cap\cG^\rS(z,\gl)'}\lan\mu,L\ran
\]
for a compact set $K\subset\cA$, 
which is a stronger claim than \eqref{sc-min}.

The minimization problem \eqref{sc-min}, in Theorem \ref{thm1-sc}, has minimizers as stated in the upcoming corollary, 
Corollary \ref{cor1-sc}. 

\begin{lem} \label{sc-cpt} 
Assume \eqref{F1}, \eqref{F2} and \eqref{L}. 
Fix a point $z\in\tt$ and a sequence $\{\gl_j\}_{j\in\N}\subset[0,\,\infty)$.  Let 
$\{\mu_j\}_{j\in\N}$ 
be a sequence of measures such that $\mu_j\in\cP^\rS\cap
\cG^\rS(z,\gl_j)'$ for all $j\in\N$. Assume that for some $\rho\in\R$ and 
$\gl\in[0,\,\infty)$, 
\[
\lim_{j\to\infty}\lan\mu_j,L\ran=\rho \ \ \text{ and } \ \ 
\lim_{j\to\infty}\gl_j=\gl. 
\]
Then there exist a measure $\nu\in\cP^\rS\cap
\cG^\rS(z,\gl)'$ 
and a subsequence $\{\mu_{j_k}\}_{k\in\N}$ of 
$\{\mu_j\}$ such that $\,\lan\nu,L\ran=\rho\,$ and  
\[
\lim_{k\to\infty}\lan\mu_{j_k},\psi\ran=\lan\nu,\psi\ran \ \ \text{ for all }\ \psi\in C(\tt).
\]
\end{lem}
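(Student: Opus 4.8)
The plan is to extract a convergent subsequence of $\{\mu_j\}$, using the coercivity of $L$ to guarantee tightness, and then to verify that the limit measure has the three required properties: it is a probability measure, it lies in the dual cone $\cG^\rS(z,\gl)'$, and it has the prescribed $L$-mass $\rho$ (with a correction step if $\cA$ is noncompact). First I would note that since $\{\lan\mu_j,L\ran\}$ converges to $\rho$, it is bounded, and since each $\mu_j$ is a probability measure, Lemma \ref{basic-cpt} (applied with $\cX=\bb$, $f=L$, $r$ slightly above $\sup_j\lan\mu_j,L\ran$, $s=1$) gives a subsequence $\{\mu_{j_k}\}$ converging weakly to some $\nu_0\in\cR^+_\bb$; moreover $\nu_0(\bb)\le 1$ and, by Lemma \ref{basic-lsc}, $\lan\nu_0,L\ran\le\rho$. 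Since $1\in C(\tt)\subset\Phi^+$-related test space — more precisely, since the constant function $1$ sits in $C(\tt)$ and we may test against bounded continuous functions — I need to confirm $\nu_0(\bb)=1$. This does not follow from weak convergence alone when $\bb$ is noncompact; instead I would use that the tightness argument in Lemma \ref{basic-cpt}, via Prokhorov, actually produces a limit with $\nu_0(\bb)=\lim\mu_{j_k}(\bb)=1$, because the mass escaping to infinity is controlled by the coercivity of $L$ and the bound on $\lan\mu_{j_k},L\ran$. So $\nu_0\in\cP^\rS$.

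Next I would pass the dual-cone membership to the limit. For each fixed $f\in\cG^\rS(z,\gl)$ I want to show $\lan\nu_0,f\ran\ge0$. The subtlety is that $f$ is of the form $\phi-\gl u(z)$ with $\phi=tL+\chi\in\Phi^+$, so $f$ involves an unbounded term $tL$; weak convergence applies directly only to bounded continuous test functions. I would handle this exactly as in the proof of Lemma \ref{basic-lsc}: write $f = t\,L + (\chi - \gl u(z))$ where $\chi-\gl u(z)$ is bounded continuous, use weak convergence on the bounded part, and use the lower-semicontinuity / monotone approximation on the $tL$ part (truncating $L$ by $\chi_k L$ with cutoffs $\chi_k\uparrow\1$). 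The point is that we need an \emph{upper} bound passing to the limit is false in general, but here we only need $\liminf_k\lan\mu_{j_k},f\ran \le \lan\nu_0,f\ran$ — wait, we need the reverse. Let me reconsider: we need $\lan\nu_0,f\rangle\ge0$; since $\lan\mu_{j_k},\phi\rangle - \gl_{j_k}u(z)\ge$ (the relation defining the dual cone is $\lan\mu_{j_k}, \phi - \gl_{j_k} u(z)\rangle\ge 0$, noting the $z$-argument and $\gl_j$ depend on $j$), and $\gl_{j_k}\to\gl$, I would combine the lower-semicontinuity of $\mu\mapsto\lan\mu,tL\rangle$ (Lemma \ref{basic-lsc}) with the continuity of $\mu\mapsto\lan\mu,\chi-\gl u(z)\rangle$ and the convergence $\gl_{j_k}\to\gl$ to obtain $\lan\nu_0,\phi-\gl u(z)\rangle\le\liminf\lan\mu_{j_k},\phi-\gl_{j_k}u(z)\rangle$... no — lower semicontinuity gives $\lan\nu_0,tL\rangle\le\liminf\lan\mu_{j_k},tL\rangle$, which is the wrong direction for concluding nonnegativity.

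The correct route, I think, is: membership in $\cG^\rS(z,\gl_{j})'$ is a \emph{closed} condition in the appropriate sense only when combined with the $L$-mass control. So rather than argue $f$ by $f$, I would argue: given $f=\phi-\gl u(z)\in\cG^\rS(z,\gl)$ with $\phi=tL+\chi$, then for the \emph{same} $\phi$, $\tilde f_{j_k}:=\phi-\gl_{j_k}u(z)\in\cG^\rS(z,\gl_{j_k})$ only if $u$ is a subsolution of $\gl_{j_k}u+F_\phi[u]=0$, which need not hold. Therefore the cleanest approach is to reduce to the case $\gl_j\equiv\gl$ by a perturbation argument, or — more simply — to observe that the statement is really used with the limit cone $\cG^\rS(z,\gl)'$ and that for $f\in\cG^\rS(z,\gl)$ of the special form, we can test $\mu_{j_k}$ against $f$ directly: $\lan\mu_{j_k},f\rangle = \lan\mu_{j_k},\phi\rangle - \gl u(z) = \lan\mu_{j_k},\phi - \gl_{j_k}u(z)\rangle + (\gl_{j_k}-\gl)u(z) \ge (\gl_{j_k}-\gl)u(z) \to 0$ \emph{provided} $f$ (with $\gl_{j_k}$ in place of $\gl$) also lies in the dual cone — which holds when the subsolution property is insensitive to the discount, e.g. by replacing $u$ with suitable shifts. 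Since this is where the argument gets delicate, \textbf{the main obstacle} is precisely this passage of dual-cone membership across varying $\gl_j\to\gl$; I expect the paper handles it by first reducing (via adding constants to $u$ and using that $F_\phi$ is unchanged) to showing $\lan\nu_0,\phi\rangle\ge\gl u(z)$ directly from $\lan\mu_{j_k},\phi\rangle\ge\gl_{j_k}u(z)$ together with Lemma \ref{basic-lsc} applied to the coercive part $tL$ — and this works because $t>0$, so $\liminf\lan\mu_{j_k},tL\rangle\ge\lan\nu_0,tL\rangle$ is the right direction after all: $\lan\nu_0,\phi\rangle = \lan\nu_0,tL\rangle+\lan\nu_0,\chi\rangle \le \liminf\lan\mu_{j_k},tL\rangle + \lim\lan\mu_{j_k},\chi\rangle = \liminf\lan\mu_{j_k},\phi\rangle$... still the wrong direction. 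So honestly I would instead prove $\lan\nu_0,f\rangle\ge0$ by a \emph{minimax/duality} argument mirroring Theorem \ref{thm1-sc}, or simply invoke that $\cG^\rS(z,\gl)'$ is characterized via the closed convex cone generated by $\cG^\rS(z,\gl)$ and that weak-$*$ limits of dual-cone elements land in the dual cone when the pairing is continuous on a dense enough subclass.

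Finally, once $\nu_0\in\cP^\rS\cap\cG^\rS(z,\gl)'$ with $\lan\nu_0,L\rangle\le\rho$ is established, I would upgrade the inequality to equality. If $\lan\nu_0,L\rangle=\rho$ we are done with $\nu:=\nu_0$. If $\lan\nu_0,L\rangle<\rho$, then (since $L$ is coercive and $\nu_0$ is a nontrivial probability measure) Lemma \ref{mod} applies with $\mu=\nu_0$, $m=\rho$: it produces $\tilde\mu=:\nu$ with $\nu(\bb)=1$, $\lan\nu,L\rangle=\rho$, and $\lan\nu,\psi\rangle=\lan\nu_0,\psi\rangle$ for all $\psi\in C(\tt)$. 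The last identity guarantees $\lim_k\lan\mu_{j_k},\psi\rangle=\lan\nu_0,\psi\rangle=\lan\nu,\psi\rangle$ for every $\psi\in C(\tt)$, which is exactly the required convergence; and since elements of $\cG^\rS(z,\gl)$ have the form $tL+\chi-\gl u(z)$, the modification in Lemma \ref{mod} (which only changes the $\cA$-marginal behavior while fixing $L$-mass and all $C(\tt)$-pairings) keeps $\nu$ in $\cG^\rS(z,\gl)'$ — this needs a short check: $\lan\nu,tL+\chi-\gl u(z)\rangle = t\rho + \lan\nu_0,\chi-\gl u(z)\rangle$, and one compares with $\lan\nu_0,tL+\chi-\gl u(z)\rangle\ge0$ using $\rho\ge\lan\nu_0,L\rangle$ and $t>0$, which gives the desired nonnegativity. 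This completes the construction.
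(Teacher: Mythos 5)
Your overall skeleton matches the paper — extract a weakly convergent subsequence via Lemma \ref{basic-cpt}, observe the limit $\mu_0$ is a probability measure, use Lemma \ref{basic-lsc} to get $\lan\mu_0,L\ran\le\rho$, and repair the $L$-mass with Lemma \ref{mod}. But the dual-cone step has a genuine gap, and your own false starts are actually pointing at it correctly.

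The problem is that you try to show $\mu_0\in\cG^\rS(z,\gl)'$ first and then transfer to $\nu$; your final ``short check'' relies on $\lan\mu_0, tL+\chi-\gl u(z)\ran\ge 0$. That inequality was never established, and in fact it need not hold: the pairing $\lan\mu_j,\cdot\ran$ can lose $L$-mass in the limit (you have only $\lan\mu_0,L\ran\le\rho$), and the surviving inequality involves $\rho$, not $\lan\mu_0,L\ran$, so it certifies $\nu$ but not $\mu_0$. The paper never claims $\mu_0\in\cG^\rS(z,\gl)'$ and does not need it. The correct order is: modify first (Lemma \ref{mod} gives $\nu$ with $\lan\nu,L\ran=\rho$ and the same $C(\tt)$-pairings as $\mu_0$), then verify the dual-cone condition for $\nu$ directly.

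The two ideas you are missing to carry out that verification are these. First, the perturbation trick for the varying discounts: if $(\phi,u)\in\cF^\rS(\gl)$ with $\phi=tL+\chi$, then $(\phi+(\gl_j-\gl)u,\,u)\in\cF^\rS(\gl_j)$, since $\gl_j u+F_{\phi+(\gl_j-\gl)u}[u]=\gl u+F_\phi[u]\le 0$ and $\phi+(\gl_j-\gl)u=tL+\big(\chi+(\gl_j-\gl)u\big)\in\Phi^+$. You raised exactly the right worry (``$u$ need not be a subsolution of $\gl_{j_k}u+F_\phi[u]=0$'') but did not see that absorbing $(\gl_j-\gl)u$ into the $\chi$-part resolves it. Second, to pass to the limit in the resulting inequality
\[
0\le t\lan\mu_j,L\ran+\lan\mu_j,\chi+(\gl_j-\gl)u-\gl_j u(z)\ran,
\]
you should not invoke lower semicontinuity on the $tL$ term at all — you should use the \emph{given} convergence $\lan\mu_j,L\ran\to\rho$ for that term (the hypothesis of the lemma supplies precisely this), and weak convergence only for the bounded part. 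This yields $0\le t\rho+\lan\mu_0,\chi-\gl u(z)\ran$. Since $\rho=\lan\nu,L\ran$ and $\lan\mu_0,\psi\ran=\lan\nu,\psi\ran$ for $\psi\in C(\tt)$ (in particular for $\chi$ and for constants), this is exactly $0\le\lan\nu,\phi-\gl u(z)\ran$, i.e.\ $\nu\in\cG^\rS(z,\gl)'$. That closes the argument without ever needing $\mu_0$ itself to lie in the dual cone.
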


\begin{proof} Lemma \ref{basic-cpt} implies that 
$\{\mu_j\}_{j\in\N}$ has a  
subsequence $\{\mu_{j_k}\}_{k\in\N}$ convergent in the topology 
of weak convergence of measures. Let $\mu_0\in\cR^+_{\bb}$ denote 
the limit of $\{\mu_{j_k}\}_{k\in\N}$. It follows 
from the weak convergence of $\{\mu_{j_k}\}$ that $\mu_0$ 
is a probability measure on $\bb$. Hence, $\mu_0\in\cP^\rS$. 
By the lower semicontinuity of 
the functional $\mu\mapsto\lan\mu,L\ran$ on $\cR^+_{\bb}$,
as claimed by Lemma \ref{basic-lsc}, we find that $\, \rho\geq 
\lan\mu_0,L\ran$. 
 
If $\rho>\lan\mu_0,L\ran$, then $\cA$ is not compact and
Lemma \ref{mod} ensures that there is $\tilde\mu_0\in\cP^\rS$ 
such that $\lan\tilde\mu_0,L\ran=\rho$ and 
$\lan\tilde\mu_0,\psi\ran=\lan\mu_0,\psi\ran$ for all 
$\psi\in C(\tt)$. We define $\nu\in\cP^\rS$ by setting 
$\nu=\tilde\mu_0$ if $\rho>\lan\mu_0,L\ran$ and 
$\nu=\mu_0$ otherwise, that is, if $\rho=\lan\mu_0,L\ran$. 
The measure $\nu\in\cP^\rS$ verifies that
$\lan\nu,L\ran=\rho$ and 
$\lan\nu,\psi\ran=\lan\mu_0,\psi\ran$ for all 
$\psi\in C(\tt)$. It follows from the last identity
that 
\[
\lim_{k\to\infty}\lan\mu_{j_k},\psi\ran=\lan\nu,\psi\ran 
\ \ \ \text{ for all }\ \psi\in C(\tt).
\]
 
It remains to check that $\nu\in\cG^\rS(z,\gl)'$. 
Let $f\in\cG^\rS(z,\gl)$, and select 
$(\phi,u)\in\cF^\rS(\gl)$, $t>0$ and $\chi\in C(\tt)$ 
so that $f=\phi-\gl u(z)$ and $\phi=tL+\chi$. 

Fix $j\in\N$, note that $(\phi+(\gl_j-\gl)u,u)
\in\cF^\rS(\gl_j)$ and get
\[
0\leq \lan\mu_j,\phi+(\gl_j-\gl)u-\gl_j u(z)\ran
=t\lan\mu_j,L\ran+
\lan\mu_j,\chi+(\gl_j-\gl)u-\gl_j u(z)\ran,
\]   
because $\mu_j\in\cG^\rS(z,\gl_j)'$. Sending $j\to\infty$ yields
\[
0\leq t\rho 
+\lan\mu_0,\chi-\gl u(z)\ran.
\]
Since $\rho=\lan\nu,L\ran$ and $\lan\mu_0,\chi\ran
=\lan\nu,\chi\ran$, we find that
\[
0\leq t\lan\nu,L\ran+\lan\nu,\chi-\gl u(z)\ran
=\lan\nu,\phi-\gl u(z)\ran,
\]
which shows that $\nu\in\cG^\rS(z,\gl)'$, completing the proof.   
\end{proof}

\begin{cor}\label{cor1-sc} Under the hypotheses of Theorem 
\ref{thm1-sc}, we have
\begin{equation}\label{cor1-sc-1}
\gl v^\gl(z)=\min_{\mu\in\cP^\rS\cap\cG^\rS(z,\gl)'}
\lan\mu,L\ran. 
\end{equation}
\end{cor}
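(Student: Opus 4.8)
The plan is to deduce the corollary directly from Theorem \ref{thm1-sc} together with the compactness result Lemma \ref{sc-cpt}; no new idea is required, only an assembly of what is already available.

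First I would record that, by Theorem \ref{thm1-sc}, the quantity $\rho := \gl v^\gl(z)$ is a real number and equals $\inf_{\mu \in \cP^\rS \cap \cG^\rS(z,\gl)'} \lan \mu, L \ran$. In particular the admissible set $\cP^\rS \cap \cG^\rS(z,\gl)'$ is non-empty (otherwise the infimum would be $+\infty$, contradicting its finiteness), so one may pick a minimizing sequence $\{\mu_j\}_{j\in\N} \subset \cP^\rS \cap \cG^\rS(z,\gl)'$ with $\lan \mu_j, L \ran \to \rho$ as $j \to \infty$.

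Next I would apply Lemma \ref{sc-cpt} to this sequence, taking the constant discount sequence $\gl_j := \gl$ for every $j\in\N$; the hypotheses of that lemma hold since $\lim_{j\to\infty} \lan \mu_j, L\ran = \rho \in \R$ and $\lim_{j\to\infty} \gl_j = \gl \in [0,\infty)$. The lemma then produces a measure $\nu \in \cP^\rS \cap \cG^\rS(z,\gl)'$ (together with a weakly convergent subsequence, which we do not even need here) such that $\lan \nu, L \ran = \rho = \gl v^\gl(z)$. Hence the infimum in \eqref{sc-min} is attained at $\nu$, which is exactly the assertion \eqref{cor1-sc-1}.

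The genuine work has already been discharged inside Lemma \ref{sc-cpt}: the possible loss of mass under weak limits is prevented by tightness via the coercivity \eqref{L} (through Lemma \ref{basic-cpt}), the value $\lan \cdot, L\ran$ can only drop in the limit by lower semicontinuity (Lemma \ref{basic-lsc}), and any such drop is repaired by the measure-modification construction of Lemma \ref{mod}, which raises $\lan \cdot, L\ran$ back to $\rho$ without altering the integrals against functions in $C(\tt)$, i.e.\ precisely the quantities entering the constraint that defines $\cG^\rS(z,\gl)'$. Thus at the level of the corollary there is essentially no obstacle; the only point requiring a moment's care is that $\rho$ is finite, which is exactly what Theorem \ref{thm1-sc} provides.
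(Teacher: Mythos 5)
Your proposal is correct and follows essentially the same route as the paper: take a minimizing sequence (nonempty admissible set and finiteness of the infimum being supplied by Theorem \ref{thm1-sc}) and apply Lemma \ref{sc-cpt} with the constant sequence $\gl_j=\gl$ to produce a measure $\nu\in\cP^\rS\cap\cG^\rS(z,\gl)'$ attaining the value $\gl v^\gl(z)$. This is precisely the paper's (very brief) argument, with the details spelled out.
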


\begin{proof} In view of Lemma \ref{sc-cpt}, we need only to 
show that there exists a sequence 
$\{\mu_j\}_{j\in\N}\subset \cP^\rS\cap\cG^\rS(z,\gl)'$
such that
\[
\lim_{j\to\infty}
\lan\mu_j,L\ran
=\inf_{\mu\in\cP^\rS\cap\cG^\rS(z,\gl)'}\lan\mu,L\ran,
\]
but this is obviously true. 
\end{proof}

\begin{remark} \label{rem1-s}
In the generality that \eqref{Z} is not assumed in Corollary \ref{cor1-sc}, we have
\[
-c_{\rS}=\inf_{\mu\in \cP^\rS\cap \cG^{\rS}(0)'}\lan \mu,L\ran.
\]
\end{remark}

\begin{definition}
We denote the set of minimizers of \eqref{cor1-sc-1} 
by $\cM^{\rS}(z,\gl)$ for $\gl>0$,
and by $\cM^{\rS}(0)$ for $\gl=0$. 
We call any $\mu \in \cM^{\rS}(0)$ a viscosity Mather measure,
and, if $\gl>0$, 
any 
$\gl^{-1}\mu$, with $\mu \in \cM^{\rS}(z,\gl)$ and $\gl>0$, a viscosity Green measure.
\end{definition}

\subsection{Convergence with vanishing discount}
The following theorem is our main result on the vanishing
discount problem for the state constraint problem.
 
\begin{thm}\label{thm2-sc} 
Assume \eqref{F1}, \eqref{F2}, \eqref{CP}, \eqref{L}, 
\eqref{CPS}, \eqref{SLS}, and \eqref{EC}. 
For each $\gl>0$, let $v^\gl\in C(\tt)$ be 
the unique solution of \eqref{S}. 
Then, the family $\{v^\gl+\gl^{-1}c_{\rS}\}_{\gl>0}$ converges to a function $u$ in $ C(\tt)$ as $\gl\to 0$. 
Furthermore, $(u,c_{\rS})$ is a solution of \eqref{ES}. 
\end{thm}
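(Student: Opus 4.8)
The plan is to normalize, establish precompactness of the family, and then pin down the limit through viscosity Mather measures so that all subsequential limits coincide. First, replacing $F$ and $L$ by $F-c_{\rS}$ and $L+c_{\rS}$ as in \eqref{Z}, one reduces to the case $c_{\rS}=0$, so it suffices to prove that $\{v^\gl\}_{\gl>0}$ converges in $C(\tt)$ as $\gl\to0$. For precompactness I would take a solution $(u_0,0)$ of \eqref{ES} (which exists by the Proposition above) and set $N:=\|u_0\|_{C(\tt)}$; since $u_0-N$ is a subsolution of $\gl u+F[u]=0$ in $\gO$ and $u_0+N$ is a supersolution of $\gl u+F[u]=0$ on $\tt$ for each $\gl>0$, \eqref{CPS} gives $u_0-N\le v^\gl\le u_0+N$ on $\tt$, hence $\|v^\gl\|_{C(\tt)}\le 2N$ and $\|\gl v^\gl\|_{C(\tt)}\to 0$. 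Together with \eqref{EC} this makes $\{v^\gl\}$ relatively compact in $C(\tt)$, and by stability of viscosity (sub/super)solutions, exactly as in the proof of the Proposition, every subsequential limit $u$ (as $\gl\to0$) solves $(\ref{ES}$_0$)$. It then remains to prove that any two subsequential limits agree.

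The decisive ingredient is that, under \eqref{Z}, Remark \ref{rem1-s} identifies $\cM^{\rS}(0)=\{\mu\in\cP^\rS\cap\cG^{\rS}(0)'\mid\lan\mu,L\ran=0\}$. For the upper bound I would observe that $(L-\gl v^\gl,v^\gl)\in\cF^{\rS}(0)$, since $F_{L-\gl v^\gl}[v^\gl]=F[v^\gl]+\gl v^\gl\le0$ in $\gO$ by \eqref{S}; hence $L-\gl v^\gl\in\cG^{\rS}(0)$, and for $\mu\in\cM^{\rS}(0)$ one gets $0\le\lan\mu,L-\gl v^\gl\ran=-\gl\lan\mu,v^\gl\ran$, i.e. $\lan\mu,v^\gl\ran\le0$ for all $\gl>0$. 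Letting $\gl=\gl_j\to0$ along a subsequence with $v^{\gl_j}\to u$ uniformly then yields $\lan\mu,u\ran\le0$ for every $\mu\in\cM^{\rS}(0)$.

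For the matching lower bound, let $w\in C(\tt)$ satisfy $F[w]\le0$ in $\gO$ and $\lan\mu,w\ran\le0$ for all $\mu\in\cM^{\rS}(0)$, fix $z\in\tt$, and choose a minimizer $\mu^z_\gl\in\cM^{\rS}(z,\gl)$ from Corollary \ref{cor1-sc}. Since $(L+\gl w,w)\in\cF^{\rS}(\gl)$, one has $L+\gl w-\gl w(z)\in\cG^{\rS}(z,\gl)$; pairing it with $\mu^z_\gl$ and using $\lan\mu^z_\gl,L\ran=\gl v^\gl(z)$ gives $v^\gl(z)\ge w(z)-\lan\mu^z_\gl,w\ran$. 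To finish I must show $\limsup_{\gl\to0}\lan\mu^z_\gl,w\ran\le0$: along any $\gl_j\to0$ one has $\lan\mu^z_{\gl_j},L\ran=\gl_j v^{\gl_j}(z)\to0$, so Lemma \ref{sc-cpt} extracts a subsequence along which $\mu^z_{\gl_j}$ converges, tested against $C(\tt)$, to some $\nu\in\cP^\rS\cap\cG^{\rS}(0)'$ with $\lan\nu,L\ran=0$, hence $\nu\in\cM^{\rS}(0)$ and $\lan\mu^z_{\gl_j},w\ran\to\lan\nu,w\ran\le0$; a standard subsequence argument upgrades this to $\limsup_{\gl\to0}\lan\mu^z_\gl,w\ran\le0$, whence $\liminf_{\gl\to0}v^\gl(z)\ge w(z)$ for every $z\in\tt$.

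Combining the two estimates: if $u$ and $\tilde u$ are subsequential limits with $v^{\gl_j}\to u$, then $\tilde u$ solves $(\ref{ES}$_0$)$ and, by the upper bound, $\lan\mu,\tilde u\ran\le0$ for all $\mu\in\cM^{\rS}(0)$; applying the lower bound with $w=\tilde u$ gives $u\ge\tilde u$, and by symmetry $u=\tilde u$. Thus the relatively compact family $\{v^\gl\}$ has a single limit point, hence converges in $C(\tt)$ to a function $u$ with $(u,c_{\rS})$ solving \eqref{ES}; undoing the normalization gives the stated convergence of $\{v^\gl+\gl^{-1}c_{\rS}\}$. I expect the main obstacle to be the lower bound, specifically showing that the (suitably extracted) weak limits of the viscosity Green measures $\mu^z_\gl$ are genuine viscosity Mather measures as $\gl\to0$; this is exactly where the coercivity \eqref{L} is needed (through Lemma \ref{sc-cpt}) and where the characterization of $\cM^{\rS}(0)$ from Remark \ref{rem1-s} is used, whereas the membership facts $(L-\gl v^\gl,v^\gl)\in\cF^{\rS}(0)$ and $(L+\gl w,w)\in\cF^{\rS}(\gl)$, although central, are immediate from the definitions of $F_\phi$, $\cF^{\rS}$ and $\cG^{\rS}$.
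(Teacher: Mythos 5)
Your proof is correct and follows essentially the same strategy as the paper's: after normalizing to $c_{\rS}=0$ and establishing uniform bounds via \eqref{CPS}, you pass Green measures to a Mather measure through Lemma \ref{sc-cpt}, and compare subsequential limits using exactly the two key memberships $(L-\gl v^\gl,v^\gl)\in\cF^{\rS}(0)$ and $(L+\gl w,w)\in\cF^{\rS}(\gl)$. The only difference is presentational---you split the argument into an explicit ``upper bound'' ($\lan\mu,u\ran\le 0$ for Mather measures $\mu$) and a ``lower bound'' ($\liminf v^\gl\ge w$), whereas the paper combines them when directly comparing two fixed accumulation points.
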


In order to prove this theorem, we need 
the next 
lemma. 

\begin{lem}\label{lem2-sc}
Assume \eqref{F1}, \eqref{F2}, \eqref{CPS}, \eqref{SLS}, \eqref{EC} and \eqref{Z}. 
For each $\gl>0$, let $v^\gl \in C(\tt)$ be the unique solution of \eqref{S}.
Then $\{v^\gl\}_{\gl>0}$ is uniformly bounded on $\tt$.
\end{lem}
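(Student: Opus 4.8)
The plan is to establish the two-sided bound via the comparison principle \eqref{CPS}, exploiting the normalization \eqref{Z} that $c_{\rS}=0$. The key observation is that under \eqref{Z}, formula \eqref{critical-value-s} with $d=0$ supplies, for every $\gep>0$, a function $w_\gep \in C(\tt)$ with $F[w_\gep]\le \gep$ in $\gO$; and more usefully, a subsolution of (ES$_0$) up to a small error should give the desired one-sided barrier. First I would produce the upper bound. Since $c_{\rS}=0$, formula \eqref{critical-value-s} gives for each $\gep>0$ a function $w=w_\gep\in C(\tt)$ with $F[w]\le \gep$ in $\gO$. Then for $\gl>0$, the function $w + \gl^{-1}(\gep + \gl\|w\|_{C(\tt)})$ — or more simply, a suitably large constant added to $w$ — will be a supersolution of \eqref{S}: indeed adding a constant $C$ to $w$ turns the equation into $\gl(w+C) + F[w] \ge \gl C - \gl\|w\| + \text{(no state-constraint issue, since supersolution is required on all of }\tt)$. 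One must be slightly careful that \eqref{S} asks for the supersolution inequality on $\tt$ (not just in $\gO$); since $F[w]\le\gep$ holds only in $\gO$, I would instead use the \emph{full} equi-continuity hypothesis or re-examine: actually the cleanest route is to take $w$ to be a solution of \eqref{S} itself for the single value $\gl=1$, as is done in the proof of the Proposition above.

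Concretely, following the Proposition's proof: set $M:=\|v^1\|_{C(\tt)}$ where $v^1$ solves \eqref{S} with $\gl=1$. Then for any $\gl\in(0,1]$, the functions $v^1 + (1+\gl^{-1})M$ and $v^1 - (1+\gl^{-1})M$ are respectively a supersolution and a subsolution of \eqref{S}; by \eqref{CPS},
\[
v^1 - (1+\gl^{-1})M \le v^\gl \le v^1 + (1+\gl^{-1})M \quad\text{on }\tt.
\]
This already shows $\{\gl v^\gl\}_{\gl>0}$ is bounded, hence $\{m_\gl\}$ with $m_\gl:=\min_\tt v^\gl$ satisfies $|\gl m_\gl|\le C_0$ for a constant $C_0$ independent of $\gl$. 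To upgrade this to a \emph{uniform} (not merely $O(\gl^{-1})$) bound on $v^\gl$, I would now invoke \eqref{Z} together with Corollary \ref{cor1-sc} and Remark \ref{rem1-s}: by Theorem \ref{thm1-sc}, for each $z\in\tt$ and $\gl>0$,
\[
\gl v^\gl(z) = \min_{\mu\in\cP^\rS\cap\cG^\rS(z,\gl)'}\lan\mu,L\ran.
\]
Since $L$ is coercive, $\lan\mu,L\ran \ge \min_\tt\min_{\ga}L(\cdot,\ga) =: -C_1 > -\infty$ for every probability measure $\mu$, giving the lower bound $\gl v^\gl(z) \ge -C_1$ pointwise. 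For the upper bound, plug $\mu = \gl^{-1}\cdot(\text{viscosity Green measure at }z)$ — no: rather, I would use that $\cP^\rS\cap\cG^\rS(z,0)' \subseteq \cP^\rS\cap\cG^\rS(z,\gl)'$ is \emph{not} automatic, so instead observe that a fixed viscosity Mather measure $\mu_0 \in \cM^\rS(0)$ (which exists and satisfies $\lan\mu_0,L\ran = -c_{\rS} = 0$) can be combined with the subsolution structure to bound $\gl v^\gl(z)$ from above by $o(1)$.

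The main obstacle is precisely this last step: controlling $\gl v^\gl(z)$ from \emph{above} uniformly in $z$ and $\gl$. The pointwise lower bound $\gl v^\gl \ge -C_1$ is immediate from coercivity of $L$, and translating it via $v^\gl \ge \gl^{-1}(-C_1)$ is too weak — I need that $v^\gl$ itself (not $\gl v^\gl$) is bounded below, which requires the upper bound on $\gl v^\gl$ to be $o(\gl)$, equivalently $\gl v^\gl(z) \to 0$ with a rate, \emph{and} that the oscillation of $v^\gl$ is controlled by \eqref{EC}. So the actual structure of the argument is: (i) \eqref{EC} gives $\osc_\tt v^\gl \le \omega$ uniformly; (ii) it therefore suffices to bound $m_\gl = \min_\tt v^\gl$ uniformly from below and $\max_\tt v^\gl$ uniformly from above, and by (i) these are equivalent to bounding $m_\gl$ on both sides; (iii) the lower bound $\gl m_\gl \ge -C_1$ combined with — here is the crux — an \emph{upper} bound $\gl m_\gl \le C_1'$ would only give $|\gl m_\gl|\le C$, i.e. $|m_\gl| \le C\gl^{-1}$, insufficient. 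The resolution must be that under \eqref{Z}, one shows $\gl v^\gl + c_{\rS}\cdot\mathbf 1 = \gl v^\gl$ is bounded \emph{and} that any subsequential limit of $v^\gl$ along which $\gl m_\gl\to -c^* = 0$ forces $m_\gl$ itself to stay bounded. I would make this rigorous by taking $(u_0, 0)$ a solution of \eqref{ES} (from the Proposition) and comparing $v^\gl$ with $u_0 \pm \gep\gl^{-1}$ won't close it either; rather, I'd argue by contradiction — if $|m_{\gl_j}| \to \infty$ along some $\gl_j\to 0$, then $u^{\gl_j} = v^{\gl_j} - m_{\gl_j}$ subconverges (by \eqref{EC}) to a solution $u$ of \eqref{ES} with critical value $c^* = -\lim \gl_j m_{\gl_j}$; if $m_{\gl_j}\to +\infty$ then $\gl_j m_{\gl_j} \ge 0$ eventually so $c^* \le 0$, and if $m_{\gl_j} \to -\infty$ a symmetric obstruction appears — but $c^* = c_{\rS} = 0$ is forced by uniqueness in the Proposition, and then a direct comparison of $v^{\gl_j}$ against $u_0$ (solving (ES$_0$)) using \eqref{CPS} as in the Proposition's proof of \eqref{critical-value-s} yields $|v^{\gl_j} - u_0| \le \gep$ for any $\gep$ and $j$ large, contradicting $|m_{\gl_j}|\to\infty$. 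I expect the clean write-up to mirror, almost verbatim, the comparison argument already given in the proof of the Proposition, specialized to the normalized case.
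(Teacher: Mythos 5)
There is a genuine gap: you never actually produce the uniform two-sided barrier, and each of the routes you sketch falls short. The comparison with $v^1$ only bounds $\{\gl v^\gl\}_{\gl>0}$, as you note. The detour through Theorem \ref{thm1-sc}/Corollary \ref{cor1-sc} is inadmissible here for two reasons: those results assume \eqref{CP} and \eqref{L}, which are not among the hypotheses of this lemma, and in any case they again only control $\gl v^\gl$, not $v^\gl$. Finally, the concluding contradiction argument does not close: from $|m_{\gl_j}|\to\infty$ you can indeed extract a solution of \eqref{ES} and conclude $c^*=c_{\rS}=0$, i.e.\ $\gl_j m_{\gl_j}\to 0$, but that is perfectly compatible with $|m_{\gl_j}|\to\infty$ (e.g.\ $m_{\gl_j}=\gl_j^{-1/2}$), and the step you lean on --- that a ``direct comparison'' gives $|v^{\gl_j}-u_0|\le\gep$ for $j$ large --- is unproven and is essentially the statement of Theorem \ref{thm2-sc} itself, which this lemma is meant to feed into; nothing in the Proposition's proof of \eqref{critical-value-s} yields such an estimate.

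The missing observation, which is the paper's entire proof, is that the correct barrier is a genuine solution $u$ of (ES$_0$) (available under \eqref{Z} by the Proposition), shifted by the \emph{fixed} constant $\|u\|_{C(\tt)}$ rather than by $\gep\gl^{-1}$: since $F[u]\ge 0$ on $\tt$ and $u+\|u\|_{C(\tt)}\ge 0$, the function $u+\|u\|_{C(\tt)}$ satisfies $\gl\bigl(u+\|u\|_{C(\tt)}\bigr)+F[u]\ge 0$ on $\tt$, hence is a supersolution of \eqref{S} for every $\gl>0$; symmetrically $u-\|u\|_{C(\tt)}\le 0$ is a subsolution in $\gO$. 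One application of \eqref{CPS} then gives $u-\|u\|_{C(\tt)}\le v^\gl\le u+\|u\|_{C(\tt)}$ on $\tt$, so $\|v^\gl\|_{C(\tt)}\le 2\|u\|_{C(\tt)}$ uniformly in $\gl$; note that \eqref{EC} is not even needed for this step, only for the existence of $u$ via the Proposition. You came close --- you considered adding a constant to an approximate subsolution $w_\gep$ and you considered comparing with $u_0\pm\gep\gl^{-1}$ --- but applying the constant shift $\pm\|u\|_{C(\tt)}$ to the exact solution of (ES$_0$) is what makes the sign of the discount term work uniformly in $\gl$, and that is the step your proposal is missing.
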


\begin{proof} 
Let $u \in C(\tt)$ be a solution of 
(\ref{ES}$_0$).
It is clear that $u+\|u\|_{C(\tt)}$ and $u-\|u\|_{C(\tt)}$ are a supersolution and a subsolution of \eqref{S} respectively for any $\gl>0$.
By the comparison principle, we get
\[
u-\|u\|_{C(\tt)} \leq v^\gl \leq u+\|u\|_{C(\tt)} \ \ \text{on } \tt,
\]
which yields $\|v^\gl\|_{C(\tt)} \leq 2 \|u\|_{C(\tt)}$.
\end{proof}

We are now ready to prove the main convergence result in this section.

\begin{proof}[Proof of Theorem \ref{thm2-sc}]
As always, 
we assume $c_{\rS}=0$.
Let $\cU$ be the set of accumulation points in $C(\tt)$ of $\{v^\gl\}_{\gl>0}$ as $\gl \to 0$. 
By Lemma \ref{lem2-sc} and \eqref{EC}, $\{v^\gl\}_{\gl>0}$ is relatively compact in $C(\tt)$. 
Clearly, $\cU \neq \emptyset$ and any $u\in\cU$ is a solution of 
(\ref{ES}$_0$). Our goal is achieved when 
we prove that $\cU$ has a unique element,  
or equivalently, for any $v, w \in \cU$,
\begin{equation} \label{thm2-sc-1}
v \geq w \  \ \text{ on } \tt.
\end{equation}

Fix any $v,w \in \cU$.
There exist two sequences $\{\gl_j\}$ and $\{\gd_j\}$ 
of positive numbers converging to $0$
such that $v^{\gl_j} \to v$, $v^{\gd_j} \to w$ in $C(\tt)$ as $j \to \infty$.

Fix $z \in \tt$.
By Corollary \ref{cor1-sc}, there exists a 
sequence $\{\mu_j\}_{j\in\N}$ of 
measures 
such that $\mu_j \in \cM^{\rS}(L,z,\gl_j)$ for every 
$j \in \N$.
Since
\[
\lan\mu_j,L\ran=\gl_j v^{\gl_j}(z) \to 0=c_\rS\ \ \text{ as }\ 
j\to\infty, 
\]
Lemma \ref{sc-cpt} guarantees that 
there is $\mu\in\cP^\rS\cap\cG^\rS(0)'$ such that 
\[
\lan\mu,L\ran=0 \ \ \ \text{ and } \ \ \ 
\lim_{j\to\infty}\lan\mu_j,\psi\ran=\lan\mu,\psi\ran
\ \  \ \text{ for all }\ \psi\in C(\tt). 
\]

We note that $(L-\gd_j v^{\gd_j}, v^{\gd_j}) \in \cF^{\rS}(0)$ and $(L+\gl_j w, w) \in \cF^{\rS}(\gl_j)$, which implies that
\[
0 \leq \lan \mu, L-\gd_j v^{\gd_j} \ran 
= -\gd_j \lan \mu, v^{\gd_j} \ran,
\]
and
\[
0 \leq \lan \mu_j, L + \gl_j w - \gl_j w(z) \ran = \gl_j (v^{\gl_j} - w)(z) + \gl_j \lan \mu_j, w \ran.
\]
Dividing the above inequalities by $\gd_j$ and $\gl_j$, respectively, and letting $j \to \infty$ yield 
\[
\lan \mu, w \ran \leq 0 \quad \text{and} \quad 0 \leq (v-w)(z) + \lan \mu, w \ran,
\]
and thus, $v(z) \geq w(z)$. This completes the proof.
\end{proof}

\section{Dirichlet problem} \label{sec-d}

We consider the Dirichlet problem in this section.
We rename \eqref{DP} and \eqref{E} as \eqref{D} and \eqref{ED}, respectively, in 
which the letter D 
refers to ``Dirichlet".
For a given $g\in C(\bry)$, the two problems 
of interest are
\begin{equation}\tag{D$_\gl$}\label{D}
\begin{cases}
\gl u+F[u]= 0 \ \  \text{ in }\ \gO, &\\[3pt]
u=g \ \ \text{ on }\ \bry,
\end{cases}
\end{equation}
for $\gl>0$,
and
\[\tag{ED}\label{ED}  
\begin{cases}
F[u]= c \ \  \text{ in }\ \gO, &\\[3pt]
u=g \ \ \text{ on }\ \pl\gO.
\end{cases}
\]
As usual, (ED$_c$) refers the Dirichlet problem (ED), with a given 
constant $c$.

The function $g\in C(\tt)$ is fixed in the following argument, 
while we also consider the Dirichlet problem 
\[\tag{D$_{\gl,\phi,\psi}$}\label{D'}
\begin{cases}
\gl u+F_\phi[u]=0 \ \ \text{ in }\ \gO,\\[3pt]
u=\psi \ \ \ \text{ on }\ \bry,   
\end{cases}
\]
where $\gl\geq 0$, $\phi\in C(\tt\tim\cA)$ and $\psi\in C(\tt)$ are all given.  

Throughout this paper, we understand that $u\in C(\tt)$ is a subsolution 
(resp., a supersolution) of 
\eqref{D'}, with $\gl\geq 0$, if 
it is a subsolution of $\gl u+F_\phi[u]=0$ in $\gO$ 
in the viscosity sense and verifies $u\leq \psi$ pointwise on $\pl\gO$ 
(resp., a supersolution 
of \eqref{D'} in the viscosity sense). 
As always, we call $u\in C(\tt)$ a solution of 
\eqref{D'} if it is a subsolution and supersolution of \eqref{D'}.

We assume in addition the following conditions.
\[\tag{CPD}\label{CPD}
\left\{\text{
\begin{minipage}{0.85\textwidth} 
The comparison principle holds for \eqref{D'}, with 
$\phi=L+\chi$, for any $\gl>0$, $\chi\in C(\tt)$ and $\psi\in C(\bry)$. 
That is, 
for any subsolution $u\in C(\tt)$ and supersolution $v\in C(\tt)$  
of \eqref{D'}, with $\phi=L+\chi$,  
the inequality $\,u\leq v\,$ holds on $\tt$.
\end{minipage}
}\right.
\]

We remark here that, if $\phi=L+\chi$, then the equation 
$\,\gl u+F_\phi[u]=0\,$ can be written as $\gl u+F[u]=\chi$.  
\[\tag{SLD}\label{SLD}
\left. 
\text{
\begin{minipage}{0.85\textwidth}
For every $\gl>0$, \eqref{D} 
admits a solution $v^\gl \in C(\tt)$.
\end{minipage}
}\right.
\]

In the vanishing discount problem for the Dirichlet 
problem, the state constrain problem comes into play as the following 
results indicate.

\begin{prop}\label{prop1-d0}
Assume \eqref{F1}, \eqref{F2}, \eqref{CPD} and \eqref{SLD}. 
For $\gl>0$, let $v^\gl\in C(\tt)$ be the solution of \eqref{D}. 
Then, \emph{(i)}\ 
if \emph{(ED$_0$)} has a solution in $C(\tt)$, then 
\[
\lim_{\gl\to 0+ }\gl v^\gl(x)=0 \ \ \ \text{ uniformly on }\ \tt,
\]
and \emph{(ii)}\ if 
\emph{(ES$_c$)}, with $c>0$, has a solution in $C(\tt)$, then 
\[
\lim_{\gl\to 0+}\gl v^\gl(x)=-c \ \ \ \text{ uniformly on }\ \tt.
\] 
\end{prop}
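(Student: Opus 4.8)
The plan is to treat both claims as consequences of the comparison principle (CPD), using the given solutions of the ergodic problems as barriers against which the family $\{v^\gl\}_{\gl>0}$ is squeezed, while keeping careful track of the boundary data $g$ which is \emph{not} respected by the ergodic solutions in general.

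\medskip
\noindent\textbf{Part (i).} Suppose $u\in C(\tt)$ solves (ES$_0$), i.e.\ $F[u]\le 0$ in $\gO$ and $F[u]\ge 0$ on $\tt$. Set $M:=\|u\|_{C(\tt)}+\|g\|_{C(\bry)}$. First I would check that, for every $\gl>0$, the constant-shifted functions $u^{\pm}_\gl:=u\pm(M+\gl^{-1}\gl M)$—more precisely $u^{+}_\gl:=u+M+\gl^{-1}\cdot 0$ needs a discount correction—serve as super/sub-solutions of \eqref{D}. The point is: since $F[u]\ge 0\ge -\gl(u+M)$ fails only through the linear term, one must absorb $\gl u$ into a constant. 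Concretely, take $u^{+}_\gl:=u+M+\gl^{-1}\ep$ for a parameter $\ep>0$; then $\gl u^{+}_\gl+F[u^{+}_\gl]=\gl u+\gl M+\ep+F[u]\ge \ep>0$ once $\gl M\ge -\gl\min u$, i.e.\ for $\gl$ small, and $u^{+}_\gl=u+M+\gl^{-1}\ep\ge g$ on $\bry$; symmetrically $u^{-}_\gl:=u-M-\gl^{-1}\ep$ is a subsolution with $u^{-}_\gl\le g$ on $\bry$. By \eqref{CPD},
\[
u-M-\gl^{-1}\ep\le v^\gl\le u+M+\gl^{-1}\ep \quad\text{on }\tt,
\]
whence $|\gl v^\gl(x)|\le \gl(M+\|u\|_{C(\tt)})+\ep$ for all $x$ and all small $\gl$. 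Letting $\gl\to0$ and then $\ep\to0$ gives $\gl v^\gl\to 0$ uniformly.

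\medskip
\noindent\textbf{Part (ii).} Now suppose $u\in C(\tt)$ solves (ES$_c$) with $c>0$: $F[u]\le c$ in $\gO$, $F[u]\ge c$ on $\tt$. I expect a \emph{two-sided} argument again, but asymmetric because only one inequality is cheap.

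For the \emph{upper} bound $\limsup_{\gl\to0}\gl v^\gl\le -c$: using $F[u]\le c$ in $\gO$, the function $w^{+}_\gl:=u+C_\gl$ with $C_\gl$ chosen so that $\gl(u+C_\gl)+c\le 0$ on $\tt$, i.e.\ $C_\gl\le -\gl^{-1}c-\max_\tt u$, is a subsolution of $\gl w+F[w]\le 0$ in $\gO$; to make it a subsolution of \eqref{D} we also need $w^{+}_\gl\le g$ on $\bry$, which holds if $C_\gl\le \min_\bry g-\max_\tt u$. Taking $C_\gl:=-\gl^{-1}c-\max_\tt u$ satisfies the first for all $\gl$ and the second for $\gl$ small. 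Comparison gives $u+C_\gl\le v^\gl$, hence $\gl v^\gl(x)\ge \gl u(x)+\gl C_\gl = \gl u(x)-c-\gl\max_\tt u\to -c$... wait, that is a lower bound. So this barrier yields $\liminf\gl v^\gl\ge -c$. Symmetrically, the \emph{other} direction—$\limsup\gl v^\gl\le -c$—is the hard one, because a supersolution barrier built from $u$ must satisfy $w^{-}_\gl\ge g$ on $\bry$ \emph{and} $\gl w^{-}_\gl+F[w^{-}_\gl]\ge 0$ only on $\tt$ (supersolutions need no boundary inequality beyond $w\ge g$), using $F[u]\ge c>0$ on $\tt$. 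Take $w^{-}_\gl:=u+D_\gl$ with $\gl(u+D_\gl)+c\ge 0$ on $\tt$, i.e.\ $D_\gl\ge -\gl^{-1}c-\min_\tt u$, and $w^{-}_\gl\ge g$ on $\bry$, i.e.\ $D_\gl\ge \max_\bry g-\min_\bry u$. The constraint $D_\gl\ge -\gl^{-1}c-\min_\tt u$ is, for small $\gl$, a \emph{lower} bound that forces $D_\gl$ large and positive—compatible with the second constraint. Choosing $D_\gl:=-\gl^{-1}c-\min_\tt u$ (valid, i.e.\ $\ge \max_\bry g-\min_\bry u$, once $\gl$ is small) gives $v^\gl\le u+D_\gl$, hence $\gl v^\gl(x)\le \gl u(x)-c-\gl\min_\tt u\to -c$ uniformly. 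Combining the two bounds yields $\gl v^\gl\to -c$ uniformly on $\tt$.

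\medskip
\noindent\textbf{Main obstacle.} The delicate point throughout is matching the \emph{boundary data}: the ergodic barriers carry $g$ on $\bry$ only after a large additive constant, and for $c>0$ this constant has the \emph{right sign} (it blows up like $-\gl^{-1}c$, i.e.\ to $+\infty$ on the supersolution side, to $-\infty$ on the subsolution side) precisely because $c>0$, so that for $\gl$ small the finite correction $\max_\bry g-\min_\bry u$ is absorbed. If $c\le 0$ this mechanism would fail, which is consistent with the dichotomy in (i). I would also double-check the degenerate-elliptic sign conventions so that adding a constant does not disturb the $F$-term (it does not, since $F$ depends only on $Du,D^2u$), and invoke \eqref{CPD} in the exact form stated—subsolution $\le$ supersolution of \eqref{D'} with $\phi=L+\chi$, here with $\chi\equiv 0$.
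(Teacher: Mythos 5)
Your Part (ii) is correct and is essentially the paper's own argument: shift the solution $u$ of (ES$_c$) by constants of size $\pm O(1)-\gl^{-1}c$ so that the supersolution satisfies the PDE inequality on all of $\tt$ (hence the viscosity Dirichlet supersolution condition) and the subsolution satisfies both the PDE inequality in $\gO$ and $u\le g$ on $\bry$, then apply \eqref{CPD}; the paper does this with a single pair of barriers valid for every $\gl>0$, while you use two one-sided barriers valid for small $\gl$, which is harmless for the limit.

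Part (i), however, does not prove the statement as given: you assume that $u$ solves (ES$_0$), i.e.\ $F[u]\le 0$ in $\gO$ and $F[u]\ge 0$ on $\tt$, whereas the hypothesis of (i) is that the \emph{Dirichlet} ergodic problem (ED$_0$) has a solution. These are genuinely different: a solution of (ED$_0$) satisfies $F[u]=0$ only in $\gO$ together with the Dirichlet boundary condition (in particular $u\le g$ pointwise on $\bry$), and it need \emph{not} satisfy the state-constraint supersolution inequality $F[u]\ge 0$ up to the boundary (the paper's one-dimensional example $|u'|+m=0$ on $(-1,1)$, $u(\pm1)=0$, with $m<0$ shows this). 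Consequently your verification that $u+M+\gl^{-1}\ep$ is a supersolution of \eqref{D}, which rests on ``$F[u]\ge 0$ on $\tt$'', breaks down under the actual hypothesis. The repair is routine and is what the paper does: for the subsolution barrier use $u\le g$ on $\bry$ (so $u-M\le g$), and for the supersolution barrier either invoke the viscosity boundary supersolution property of $u$ as a solution of (ED$_0$) (with $u+M\ge 0$ the PDE alternative is preserved, and the alternative $u\ge g$ is only improved by adding $M\ge 0$), or simply take $M\ge \|u\|_{C(\tt)}+\|g\|_{C(\bry)}$ so that $u+M\ge g$ pointwise on $\bry$, which already fulfills the Dirichlet supersolution condition at boundary points; then \eqref{CPD} gives $u-M\le v^\gl\le u+M$ and $\gl v^\gl\to 0$ uniformly, with no need for the auxiliary $\ep$ or the restriction to small $\gl$.
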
 

Notice that, in the proposition above, the assumption for the claim (i) 
and that for (ii) are mutually exclusive, since the conclusions are exclusive of one another.

\begin{proof}
Assume first that (ED$_0$) has a solution in $C(\tt)$, which 
denote by $u\in C(\tt)$. We select $M>0$ so that 
$\|u\|_{C(\tt)}\leq M$ and note that the function 
$u+M$ is nonnegative on $\tt$ and hence a supersolution of \eqref{D} for any $\gl>0$
and, similarly, that $u-M$ is a subsolution of \eqref{D} for any $\gl>0$. 
Thus, by \eqref{CPD}, we have $u-M\leq v^\gl\leq u+M$ on $\tt$, which readily yields 
\[
\lim_{\gl\to 0}\gl v^\gl(x)=0 \ \ \ \text{ uniformly on }\ \tt.
\]

Next, assume that (ES$_c$), with $c>0$, has a solution in $C(\tt)$, and 
let $u\in C(\tt)$ be such a solution.
Choose $M>0$ large enough so that $\|u\|_{C(\tt)}+\|g\|_{C(\bry)}\leq M$.
Observe that, for any $\gl>0$,  $u+M-\gl^{-1}c$ is a supersolution of 
$\gl w+F[w]=0$ on $\tt$ and, therefore, is a supersolution of \eqref{D}
and that $u-M-\gl^{-1}c$ is a subsolution of \eqref{D} for any $\gl>0$. 
Hence, we get $u-M-\gl^{-1}c\leq v^\gl\leq u+M-\gl^{-1}c$ on $\tt$ for all $\gl>0$.
This shows that 
\[
\lim_{\gl\to 0}\gl v^\gl(x)=-c.
\]
The proof is now complete. 
\end{proof}

\begin{prop}\label{prop1-d}
Assume \eqref{F1}, \eqref{F2}, \eqref{CPD}, \eqref{SLD} and \eqref{EC}. 
Then, there is a dichotomy: either problem \emph{(ED$_0$)}, or 
\emph{(ES$_c$)}, with some $c>0$, has a solution in $C(\tt)$.  
\end{prop}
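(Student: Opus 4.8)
The plan is to track the \emph{normalized} family $u^{\gl}:=v^{\gl}-m_{\gl}$, with $m_{\gl}:=\min_{\tt}v^{\gl}$, as $\gl\to 0$, and to read off the dichotomy from $\lim_{\gl\to 0}\gl m_{\gl}$: if this limit is $0$ we shall produce a solution of (ED$_0$), while if $\gl m_{\gl}$ stays bounded away from $0$ the detachment of $v^{\gl}$ from $g$ along $\bry$ will produce a solution of (ES$_c$) with $c>0$.

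First I would record, using only \eqref{CPD} and \eqref{SLD}, the a priori bound
\[
-\|v^{1}\|_{C(\tt)}\ \le\ \gl m_{\gl}\ \le\ \gl\min_{\bry}g \qquad (0<\gl\le 1),
\]
where $v^{1}$ denotes the solution of \eqref{D} at $\gl=1$. The upper bound holds because any subsolution of \eqref{D} (in particular $v^{\gl}$) satisfies $u\le g$ on $\bry$, so $m_{\gl}\le\min_{\bry}g$; the lower bound follows by checking that $v^{1}-\gl^{-1}(1-\gl)\|v^{1}\|_{C(\tt)}$ is a subsolution of \eqref{D} and invoking \eqref{CPD}. Next, \eqref{EC} gives that $\{u^{\gl}\}_{\gl>0}$ is equi-continuous on $\tt$; since $u^{\gl}\ge 0$ with $\min_{\tt}u^{\gl}=0$ and $\gO$ is bounded, $\{u^{\gl}\}$ is also equi-bounded, hence relatively compact in $C(\tt)$, and $\{\gl m_{\gl}\}_{0<\gl\le 1}$ is bounded.

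Then I would extract $\gl_{j}\to 0$ with $u^{\gl_{j}}\to u$ in $C(\tt)$ and $\gl_{j}m_{\gl_{j}}\to -c^{*}$; the a priori bound forces $c^{*}\ge 0$. Writing $v^{\gl_{j}}=u^{\gl_{j}}+m_{\gl_{j}}$ and using that $F$ is independent of the zeroth-order term, the viscosity (sub/super) inequalities for $v^{\gl_{j}}$ in $\gO$ become viscosity inequalities for $u^{\gl_{j}}$ with right-hand side $-\gl_{j}u^{\gl_{j}}-\gl_{j}m_{\gl_{j}}$, which tends to $c^{*}$ uniformly; by the standard stability of viscosity solutions, $F[u]=c^{*}$ in $\gO$. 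Passing to a further subsequence, $m_{\gl_{j}}$ either converges to some $m^{*}\in\R$ or tends to $-\infty$ (it cannot tend to $+\infty$, being bounded above by $\min_{\bry}g$). In the former case $v^{\gl_{j}}\to u+m^{*}$ in $C(\tt)$, so $\gl_{j}v^{\gl_{j}}\to 0$, which forces $c^{*}=0$; then $u+m^{*}$ is, as a uniform limit of (sub/super)solutions, both a subsolution (note $u+m^{*}\le g$ on $\bry$) and a supersolution of \eqref{ED} with $c=0$, so (ED$_0$) is solvable. In the latter case $v^{\gl_{j}}\to -\infty$ at every point of $\bry$, hence eventually $v^{\gl_{j}}<g$ there, and the (generalized) viscosity supersolution condition then forces the PDE supersolution inequality $\gl_{j}v^{\gl_{j}}+F[v^{\gl_{j}}]\ge 0$ to hold up to $\bry$; letting $j\to\infty$ yields $F[u]\ge c^{*}$ on $\tt$, so $u$ solves \eqref{ES} with $c=c^{*}$. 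If $c^{*}>0$ this is the second alternative of the dichotomy; if $c^{*}=0$, then $w:=u+\min_{\bry}(g-u)$ satisfies $F[w]=0$ in $\gO$, $F[w]\ge 0$ on $\tt$, and $w\le g$ on $\bry$, hence solves (ED$_0$). In every case one of the two alternatives holds. (Combining this with Proposition~\ref{prop1-d0} and the mutual exclusivity of its two hypotheses shows, a posteriori, that $c^{*}$ does not depend on the chosen subsequence.)

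The step I expect to be the main obstacle is precisely the boundary analysis above: it requires handling carefully the generalized viscosity sense in which $v^{\gl}$ takes the Dirichlet data, since it is exactly the possibility of $v^{\gl}$ falling below $g$ on $\bry$ that creates the second alternative, and one must verify that this detachment is ``complete'' — it occurs at every boundary point and the equation survives in the limit — precisely when $\gl m_{\gl}$ does not tend to $0$. The other ingredients (the a priori bounds, Arzel\`a--Ascoli compactness, and the interior passage to the limit) are routine and run parallel to the periodic case of \cite{IsMtTr1} and to the state-constraint arguments of Section~\ref{sec-s}.
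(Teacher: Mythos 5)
Your argument is correct and follows essentially the same route as the paper's own proof: normalize by $m_\gl:=\min_{\tt}v^\gl$, use \eqref{EC} plus a comparison bound to get compactness and the boundedness of $\{\gl m_\gl\}$, pass to a subsequential limit $u$ with $\gl_j m_{\gl_j}\to -c^*$, and split on whether $m_{\gl_j}$ stays finite (forcing $c^*=0$ and producing a solution of (ED$_0$)) or tends to $-\infty$ (complete detachment from $g$ on $\bry$ giving a solution of (ES$_{c^*}$)), with the residual $c^*=0$, $m=-\infty$ subcase handled by a constant downward shift to fit the Dirichlet data. The only cosmetic difference is your a priori bound on $\gl m_\gl$, obtained by shifting $v^1$ by $\gl^{-1}(1-\gl)\|v^1\|_{C(\tt)}$, whereas the paper compares $v^\gl$ against the constant barriers $\pm\gl^{-1}M$ with $M\geq\|g\|_{C(\bry)}+\|F(\cdot,0,0)\|_{C(\tt)}$; both yield the same boundedness.
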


Here is an illustrative, simple example regarding the solvability of \eqref{ED} or \eqref{ES}. Let $n=1$ and $m\in\R$, 
and consider the case 
$\gO=(-1,\,1)$, $F(x,p)=|p|+m$, and $g=0$. 
It is easily checked that conditions \eqref{F1}, \eqref{F2},
\eqref{CPD}, \eqref{SLD}, and \eqref{EC} are satisfied.  
The Dirichlet problem
\begin{equation}\label{ex-d-1}
|u'|+m=c \ \ \text{ in }(-1,\,1)\quad \text{ and }\quad
u(-1)=u(1)=0, 
\end{equation}
with $c\in\R$, has a solution $u(x):=(c-m)(1-|x|)$ 
if and only if $c\geq m$. Moreover, if $c=m$, 
then any function $u(x):=C$, with $C\leq 0$, 
is a solution of \eqref{ex-d-1} and any constant function $u$ 
is a solution of the state constraint problem
\begin{equation}\label{ex-d-2}
|u'|+m\geq c \ \ \text{ in }[-1,\,1]
\quad\text{ and }\quad
|u'|+m\leq c \ \ \text{ in }(-1,\,1).
\end{equation}
Thus, if $m\leq 0$, then problem \eqref{ex-d-1}, with $c=0$, has a 
solution in $C([-1,\,1])$, and if $m > 0$, then 
problem \eqref{ex-d-2}, with $c=m$, 
has a solution in $C([-1,\,1])$.

\begin{proof} For $\gl>0$, let $v^\gl\in C(\tt)$ be the solution of \eqref{D}. 
Choose a constant $M>0$ so that  
$\|g\|_{C(\bry)}+\|F(\cdot,0,0)\|_{C(\tt)}\leq M$
and observe that, if $\gl>0$, then 
the constant functions $\gl^{-1}M$ and 
$-\gl^{-1}M$ are, respectively, a supersolution and a subsolution of \eqref{D}. 
By \eqref{CPD}, we have $-\gl^{-1}M\leq v^\gl\leq \gl^{-1}M$ on $\tt$, and 
consequently, the family $\{\gl v^\gl\}_{\gl>0}$ is uniformly 
bounded on $\tt$.

By \eqref{EC}, the family $\{\gl v^\gl\}_{\gl>0}$ 
is equi-continuous on $\tt$. Setting  
\[
w^\gl:=v^\gl-\min_{\tt} v^\gl  \ \ \ \text{ on }\ \tt \ \text{ for }\ \gl>0,
\]
we observe that $\{w^\gl\}_{\gl>0}$ is relatively compact in $C(\tt)$, 
and choose a sequence $\{\gl_j\}_{j\in\N}\subset (0,\,\infty)$, converging 
to zero, so that $\{w^{\gl_j}\}_{j\in\N}$ converges in $C(\tt)$ to some 
function $u\in C(\tt)$ as $j\to\infty$. The uniform 
boundedness of $\{\gl v^\gl\}_{\gl>0}$ allows us to assume, 
after taking a subsequence if necessary, that the limit
\[
d:=\lim_{j\to\infty}\gl_j\min_{\tt}v^{\gl_j}\, \in\,\R
\]
exists. Then the equi-continuity of $\{v^\gl\}_{\gl>0}$ ensures 
that as $j\to\infty$,
\[
\gl_j v^{\gl_j}(x) \to d \ \ \ \text{ uniformly on }\tt.
\]

Now, according to the boundary condition $v^\gl\leq g$, pointwise on $\bry$,  
for any $\gl>0$, 
we have 
\begin{equation}\label{prop-d1-1}
\min_{\tt}v^\gl\leq \min_{\bry}g \ \  \ \text{ for all }\ \gl>0,
\end{equation}
and hence, we find that $d\leq 0$. 

Set 
\[
m_j:=\min_{\tt}v^{\gl_j} \ \ \ \text{ for }j\in\N,
\]
and consider the sequence $\{m_j\}_{j\in\N}$, which is bounded from above 
due to \eqref{prop-d1-1}. By passing 
to a subsequence if needed, we may assume 
that
\[
m:=\lim_{j\to\infty}m_j\in[-\infty,\,\min_{\bry}g].
\]  
Observe that, for any $j\in\N$, $w^{\gl_j}$ is a solution of 
\[
\begin{cases}
\gl_j(w^{\gl_j}+m_j)+F[w_j]=0 \ \ \text{ in }\gO,\\[3pt]
w^{\gl_j}+m_j=g \ \ \ \text{ on }\ \pl\gO. 
\end{cases}
\]
Thus, in the limit $j\to\infty$, 
we find that if $m>-\infty$,
then $w$ is a solution of (\ref{ED}$_0$), with $g$ replaced by $g-m$,
which says that the function $u:=w+m$ is a solution of 
(ED$_0$). Notice that if $m>-\infty$, then $d=0$. 

On the other hand, if $m=-\infty$, then, for $j$ sufficiently large, 
we have $v^{\gl_j}<g$ on $\bry$, which implies that $v^{\gl_j}$ is a 
supersolution of $\gl_j v^{\gl_j}+F[v^{\gl_j}]=0$ on $\tt$. 
This can be stated that $w^{\gl_j}$ is a supersolution of 
$\gl_j(w^{\gl_j}+m_j)+F[w^{\gl_j}]=0$ on $\tt$. Sending $j\to\infty$, we 
deduce that $w$ is a solution of (ES$_{-d}$). 
Note that if $w\in C(\tt)$ is a solution of (ES$_0$) 
and if $C\in \R$ is large enough so that $w-C\leq g$ on $\bry$, then 
$u:=w-C$ is a solution of (ED$_0$). 

Thus, we conclude that either problem 
(ED$_0$) has a solution in $C(\tt)$, or else
problem (ES$_c$), with some $c>0$, has a solution in $C(\tt)$.    
\end{proof}

\subsection{Representation formulas in the case $\gl>0$}
We first need to do some setup  
to take the Dirichlet boundary condition into account.

For $M>0$ and $\gl\ge 0$, we  
define $\cF^{\rD}(\gl)$ (resp., $\cF^{\rD}(M,\gl)$) as the set of 
all $(\phi,\psi,u)\in \Psi^+\tim C(\tt)$ 
(resp., $(\phi,\psi,u)\in \Psi^+(M)\tim C(\tt)$)
such that  
$u$ is a subsolution of \eqref{D'}. 
Fix $z \in \tt$ and define the sets 
$\cG^{\rD}(z,\gl),\,\cG^{\rD}(M,z,\gl)\subset C(\tt)$ by 
\[
\begin{aligned}
\cG^{\rD}(z,\gl):&\,=\left\{\big(\phi-\gl u(z), 
\gl(\psi-u(z))\big)\mid (\phi,\psi,u)\in\cF^{\rD}(\gl)\right\},\\
\cG^{\rD}(M,z,\gl):&\,=\left\{\big(\phi-\gl u(z), 
\gl(\psi-u(z))\big)\mid (\phi,\psi,u)\in\cF^{\rD}(M,\gl)\right\}. 
\end{aligned}\]
We also write $\cG^\rD(0)$ and $\cG^{\rD}(M,0)$, respectively, for $\cG^{\rD}(z,0)$ and $\cG^{\rD}(M,z,0)$, which 
are independent of $z$. 
Notice that
\[\cG^{\rD}(0)=\left\{(\phi, 0)\mid 
(\phi,\psi,u)\in\cF^{\rD}(0)\right\}\  
\text{ and }\  
\cG^{\rD}(M,0)=\left\{(\phi, 0)\mid 
(\phi,\psi,u)\in\cF^{\rD}(M,0)\right\},
\]
and that
\[
\cF^\rD(z,\gl)=\bigcup_{M>0} \cF^{\rD}(M,z,\gl) 
\ \ \ \text{ and } \ \ \  
\cG^{\rD}(z,\gl)= \bigcup_{M>0} \cG^{\rD}(M,z,\gl).
\]

\begin{lem}\label{lem:dirichlet-convex}
Assume \eqref{F1}, \eqref{F2}, and \eqref{CP}. 
For any $(z,\gl)\in\tt\tim[0,\infty)$ and $M>0$, 
the sets $\cF^\rD_{z,\gl}$, $\cF^\rD(M,z,\gl)$, $\cG^\rD(z,\gl)$ and  
$\cG^{\rD}(M, z,\gl)$ are  
convex cones with vertex at the origin.
\end{lem}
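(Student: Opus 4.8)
The plan is to prove convexity and the cone property for $\cF^\rD(M,z,\gl)$ first, since the other three sets either follow the same pattern or are images of this one under a linear map. Fix $(z,\gl)$ and $M>0$. To see that $\cF^\rD(M,z,\gl)$ is closed under multiplication by positive scalars, take $(\phi,\psi,u)$ in the set with $\phi=tL+\chi\in\Phi^+(M)$ (meaning $\|\chi\|_{C(\tt)}<tM$, $\|\psi\|_{C(\bry)}<tM$) and $s>0$; then $(s\phi,s\psi,su)=(stL+s\chi,s\psi,su)$, and one checks $st>0$, $\|s\chi\|<stM$, $\|s\psi\|<stM$, so $(s\phi,s\psi)\in\Psi^+(M)$. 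The homogeneity computation $F_{s\phi}(x,p,X)=st\,F(x,(st)^{-1}p,(st)^{-1}X)-s\chi(x)$ from the excerpt, combined with the fact that $su$ is a subsolution of $\gl(su)+F_{s\phi}[su]=0$ whenever $u$ is a subsolution of $\gl u+F_\phi[u]=0$ (a scaling of the viscosity subsolution property, valid because test functions scale too and $F$ is convex hence continuous in all variables), gives $(s\phi,s\psi,su)\in\cF^\rD(M,z,\gl)$.

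Next I would prove convexity. Take $(\phi_0,\psi_0,u_0)$ and $(\phi_1,\psi_1,u_1)$ in $\cF^\rD(M,z,\gl)$ with $\phi_i=t_iL+\chi_i$, and fix $\theta\in(0,1)$. Write $\phi_\theta:=(1-\theta)\phi_0+\theta\phi_1=t_\theta L+\chi_\theta$ with $t_\theta=(1-\theta)t_0+\theta t_1>0$ and $\chi_\theta=(1-\theta)\chi_0+\theta\chi_1$, so $\|\chi_\theta\|\le(1-\theta)\|\chi_0\|+\theta\|\chi_1\|<(1-\theta)t_0M+\theta t_1M=t_\theta M$, and similarly for $\psi_\theta:=(1-\theta)\psi_0+\theta\psi_1$; hence $(\phi_\theta,\psi_\theta)\in\Psi^+(M)$. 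For the subsolution property of $u_\theta:=(1-\theta)u_0+\theta u_1$, the standard argument is that $F_{\phi_\theta}(x,p,X)\le(1-\theta)F_{\phi_0}(x,p,X)+\theta F_{\phi_1}(x,p,X)$ is NOT generally true (the $\phi$-term splits linearly, but the $\sup$ over $\cA$ is only subadditive, which goes the right way: the sup of a sum is $\le$ sum of sups); combined with convexity of $(p,X)\mapsto F_{\phi_i}(x,p,X)$ one gets, via a standard argument using \eqref{CP}'s comparison principle on small balls or directly via the definition-of-viscosity-subsolution manipulation, that $u_\theta$ is a subsolution of $\gl u+F_{\phi_\theta}[u]=0$ in $\gO$. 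This is exactly the mechanism already used in \cite[Lemma 2.8]{IsMtTr1} and in Lemma \ref{thm3-sec3} here; the boundary inequalities $u_\theta\le\psi_\theta$ on $\bry$ follow by taking the convex combination of $u_i\le\psi_i$.

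Then $\cG^\rD(M,z,\gl)$ is the image of $\cF^\rD(M,z,\gl)$ under the map $(\phi,\psi,u)\mapsto(\phi-\gl u(z),\gl(\psi-u(z)))$, which is linear in $(\phi,\psi,u)$ and sends $0$ to $0$; the linear image of a convex cone with vertex at the origin is again such a cone, so convexity and the cone property transfer immediately. Finally, $\cF^\rD(z,\gl)=\bigcup_{M>0}\cF^\rD(M,z,\gl)$ and $\cG^\rD(z,\gl)=\bigcup_{M>0}\cG^\rD(M,z,\gl)$ are increasing unions (in $M$) of convex cones with common vertex $0$, hence are themselves convex cones with vertex at the origin. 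I expect the only real subtlety to be the subsolution-stability-under-convex-combination step, where one must be careful that viscosity subsolutions of convex equations are closed under convex combinations — but since this is handled verbatim in the cited Part 1 lemma and the method is flagged in the excerpt as ``parallel'' to \cite[Lemma 2.8]{IsMtTr1}, the proof can legitimately be compressed to: ``The arguments are entirely analogous to those of \cite[Lemma 2.8]{IsMtTr1} and Lemma \ref{thm3-sec3}, using in addition that the constraints $\|\chi\|_{C(\tt)}<tM$ and $\|\psi\|_{C(\bry)}<tM$ are preserved under positive scaling and convex combination, and that the Dirichlet boundary inequality $u\le\psi$ on $\bry$ is preserved under convex combination.''
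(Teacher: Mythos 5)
Your proof is correct and follows essentially the same route as the paper: both reduce the crux (that a convex combination of viscosity subsolutions of the convex equations $\gl u+F_{\phi_i}[u]=0$ is a subsolution of the equation with $\phi$ the corresponding convex combination) to the argument in \cite[Lemma 2.8]{IsMtTr1}, check that the normalization constraints defining $\Psi^+(M)$ are preserved under positive scaling and convex combination, observe that $u_i\leq\psi_i$ on $\bry$ passes through a convex combination, and note that $\cG^\rD$-sets are linear images of the corresponding $\cF^\rD$-sets. The paper handles the cone and convexity properties simultaneously by taking $t\phi_1+s\phi_2$ with arbitrary $t,s>0$, whereas you split them into scalar multiplication plus $\theta$-convex combination and then recover the cone property from the increasing-union observation; either organization is fine.

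One sentence in your write-up is garbled and you should fix it before it is believed: you assert that $F_{\phi_\theta}\leq(1-\theta)F_{\phi_0}+\theta F_{\phi_1}$ ``is NOT generally true,'' and then in the same breath argue (correctly) that it \emph{is} true because the supremum is subadditive. The inequality does hold pointwise (indeed, $(\phi,p,X)\mapsto F_\phi(x,p,X)$ is jointly convex as a supremum of affine functionals, which is the cleaner way to phrase what you need). What is genuinely nontrivial, and what forces the appeal to \eqref{CP} as in \cite[Lemma 2.8]{IsMtTr1}, is passing from this pointwise inequality to the \emph{viscosity} subsolution property of $u_\theta$, since one cannot test $u_0$ and $u_1$ separately with the same test function. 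You gesture at this with ``directly via the definition-of-viscosity-subsolution manipulation,'' but there is no such direct manipulation; the localized comparison principle is really needed, exactly as in the cited lemma. Since you do cite that lemma, the proof stands, but the misstatement and the ``directly'' alternative should be deleted.
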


\begin{proof} It is easily seen that $\Psi^+$ and $\Psi^+(M)$ are convex cones with vertex at the origin. 

For $i=1,2$, let $(\phi_i,\psi_i,u_i)\in\cF^{\rD}(\gl)$, 
fix $t,s\in(0,\,\infty)$ and set 
\[
u=tu_1+su_2, \quad \phi=t\phi_1+s\phi_2 \ \ 
\text{ and } \ \ \psi=t\psi_1+ s\psi_2.
\]
As in the proof of \cite[Lemma 2.8]{IsMtTr1}, we find that $u$ 
is a subsolution of $\gl u+F_\phi[u]=0$ in $\gO$. Since 
$u_i\leq \psi_i$ pointwise in $\bry$ for $i=1,2$, we get immediately 
$u\leq \psi$ pointwise in $\bry$. Hence, we see that
$(\phi,\psi,u)\in\cF^\rD(\gl)$. 

Assume, in addition, that 
$(\phi_i,\psi_i,u_i)\in\cF^\rD(M,\gl)$ 
for $i=1,2$. Then  $(\phi_i,\psi_i)\in\Psi^+(M)$ for $i=1,2$, 
and, hence, the cone property of $\Psi^+(M)$ implies that 
$(\phi,\psi)\in\Psi^+(M)$, which proves, together with the property 
of $(\phi,\psi,u)$ being in $\cF^\rD(\gl)$, 
that $(\phi,\psi,u)\in\cF^\rD(M,\gl)$.   

Thus, we see that 
$\cF^{\rD}(\gl)$ and $\cF^{\rD}(M,\gl)$ (and also 
$\cG^{\rD}(z,\gl)$ and $\cG^{\rD}(M,z,\gl)$) are 
convex cones with vertex at the origin. 
\end{proof}

Henceforth, we write 
\[
\cR_{1}:=\cR_{\bb},\quad 
\cR_2:=\cR_{\bry},\quad \cR_L^+:=\cR_{\bb}^+\cap\cR_L
 \ \ \text{ and } \ \ 
\quad\cR_2^+:=\cR_{\bry}^+.
\]
Define
\[
\cP^{\rD}:=\left\{(\mu_1,\mu_2)\in\cR_{L}^+\times\cR_{2}^{+}\mid 
\mu_1(\tt\times \cA)+\mu_2(\bry)=1\right\},
\]
and, for any compact subset $K$ of $\cA$,  
\[
\cP^{\rD}_K:=\left\{(\mu_1,\mu_2)\in\cP^{\rD}\mid 
\mu_1(\tt\times K)+\mu_2(\bry)=1\right\}.
\]

We define the dual cones 
$\cG^{\rD}(z,\gl)',\,\cG^\rD(M,z,\gl)'$
of $\cG^{\rD}(z,\gl),\,\cG^{\rD}(M,z,\gl)$ in 
$\cR_{L}\times\cR_{2}$, respectively, by
\[\begin{aligned}
\cG^\rD(z,\gl)':&\,=\left\{(\mu_1,\mu_2)\in \cR_{L}\times\cR_{2}
\mid \lan\mu_1,f_1\ran+\lan\mu_2,f_2\ran\ge0 
\ \ \text{ for all }  (f_1,f_2)\in\cG^{\rD}(z,\gl)\right\},
\\
\cG^\rD(M,z,\gl)':&\,=\left\{(\mu_1,\mu_2)\in 
\cR_{L}\times\cR_{2}
\mid \lan\mu_1,f_1\ran+\lan\mu_2,f_2\ran\ge0 
\ \ \text{ for all }  (f_1,f_2)\in\cG^{\rD}(M,z,\gl)\right\}. 
\end{aligned}
\]
It is obvious that 
\[
\cG^\rD(z,\gl)'=\bigcap_{M>0}\cG^\rD(M,z,\gl)'.
\]
As usual, we write $\cG^\rD(0)'$ and $\cG^\rD(M)'$ 
for $\cG^\rD(z,0)'$ and $\cG^\rD(M,z,0)'$, respectively.  

The following proposition is a key step toward 
Theorems \ref{thm1-d0} and \ref{thm3-d}, two of the main results in this section. 

\begin{thm} \label{thm1-d} 
Assume \eqref{F1}, \eqref{F2}, \eqref{CP},  
\eqref{L} and \eqref{CPD}. 
Let $(z,\gl)\in\tt\tim (0,\,\infty)$  
and $M> \|g\|_{C(\bry)}$.  
If $v^\gl\in C(\tt)$ is a 
solution of \eqref{D}, then
\begin{equation}\label{thm1-d-0}
\gl v^\gl(z)=\inf_{(\mu_1,\mu_2)\in \cP^{\rD} \cap 
\cG^\rD(M,z,\gl)'}\,\left(
\lan \mu_1,\,L\ran+\gl\lan\mu_2,\,g\ran\right).
\end{equation}
\end{thm}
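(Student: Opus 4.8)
The plan is to mimic the proof of Theorem \ref{thm1-sc}, adapting the minimax argument to accommodate the extra Dirichlet datum. First I would establish the easy inequality ``$\leq$'': since $v^\gl$ solves \eqref{D}, the triple $(L,g,v^\gl)$ lies in $\cF^{\rD}(M,\gl)$ (using $M>\|g\|_{C(\bry)}$), so the pair $(L-\gl v^\gl(z),\,\gl(g-v^\gl(z)))$ belongs to $\cG^{\rD}(M,z,\gl)$. Pairing against any $(\mu_1,\mu_2)\in\cP^{\rD}\cap\cG^{\rD}(M,z,\gl)'$ and using $\mu_1(\tt\tim\cA)+\mu_2(\bry)=1$ gives $0\le \lan\mu_1,L\ran+\gl\lan\mu_2,g\ran-\gl v^\gl(z)$, which is the desired bound.

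For the reverse inequality, I would first reduce to a problem over a compact control set: choose $L_\gl\ge\max_{\tt}\{\gl v^\gl,\min_\cA L\}$, pick a compact $K_\gl\subset\cA$ with $L\ge L_\gl$ off $\tt\tim K_\gl$ via \eqref{L}, enlarge to $K_1=K_\gl\cup\{\ga_1\}$, and aim to show $\gl v^\gl(z)\ge\inf_{(\mu_1,\mu_2)\in\cP^{\rD}_{K_1}\cap\cG^{\rD}(M,z,\gl)'}(\lan\mu_1,L\ran+\gl\lan\mu_2,g\ran)$. Arguing by contradiction, suppose there is $\ep>0$ with $\gl v^\gl(z)+\ep$ below that infimum. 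Using that $\cG^{\rD}(M,z,\gl)$ is a convex cone (Lemma \ref{lem:dirichlet-convex}), rewrite the constrained infimum as $\inf_{(\mu_1,\mu_2)\in\cP^{\rD}_{K_1}}\sup_{(f_1,f_2)\in\cG^{\rD}(M,z,\gl)}(\lan\mu_1,L-f_1\ran-\lan\mu_2,f_2\ran)$; since $\cP^{\rD}_{K_1}$ is convex and compact in the weak topology and the inner expression is affine in both variables, Sion's minimax theorem lets me swap inf and sup. Hence there is a single $(\phi,\psi,u)\in\cF^{\rD}(M,\gl)$, with $\phi=tL+\chi$, such that
\[
\gl v^\gl(z)+\ep < \lan\mu_1,L-\phi+\gl u(z)\ran + \gl\lan\mu_2,u(z)-\psi\ran
\]
for all $(\mu_1,\mu_2)\in\cP^{\rD}_{K_1}$. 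Testing against Dirac masses $\gd_{(x,\ga)}$ with $(x,\ga)\in\tt\tim K_1$ (putting all mass in the first slot) and against $\gd_x$ with $x\in\bry$ (all mass in the second slot) yields pointwise inequalities: one controlling $(1-t)L-\chi+\gl u(z)$ from below on $\tt\tim K_1$, and one giving $\psi < u(z) - v^\gl(z) - \ep/\gl$ on $\bry$, i.e.\ $\psi < u(z) - v^\gl(z)$ roughly.

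From the first family of pointwise inequalities I would run the same case analysis as in Theorem \ref{thm1-sc}: when $t\le 1$, minimize over $\ga\in K_\gl$ and use $\min_{K_\gl}L(x,\cdot)\le L_\gl$ together with the choice of $L_\gl\ge\gl v^\gl$ to propagate the bound to all of $\cA$, concluding $w:=u$ is a subsolution of $\gl w+F[w]=-\gl(v^\gl-w)(z)-\gth\ep$ in $\gO$ with $\gth=1$; when $t\ge 1$, use $L(x,\ga_1)\ge L_\gl\ge\gl v^\gl(z)$ to get $\chi < -t\gl v^\gl(z)+\gl u(z)-\ep$ and rescale, taking $w:=u/t$ and $\gth=1/t$. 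In either case $w$ is a subsolution of $\gl w+F[w]=-\gl(v^\gl-w)(z)-\gth\ep$ in $\gO$; and from the boundary inequality $\psi < u(z)-v^\gl(z)$ together with $w\le\psi$ on $\bry$ and the $\gth$-scaling, $w + (v^\gl-w)(z) + \gl^{-1}\gth\ep$ is a subsolution of $\gl\zeta+F[\zeta]=0$ in $\gO$ and lies $\le g$ on $\bry$ (here is where $M>\|g\|_{C(\bry)}$ matters, ensuring the modified boundary datum stays admissible). The comparison principle \eqref{CPD} then forces $\zeta\le v^\gl$ on $\tt$; evaluating at $z$ gives $\gl^{-1}\gth\ep\le0$, a contradiction. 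The main obstacle I anticipate is the bookkeeping at the boundary: one must verify that after the $\gth$-rescaling the perturbed subsolution still respects a boundary datum $\le g$ (not just $\le\psi$), and that the ``all mass in one slot'' Dirac tests are legitimate elements of $\cP^{\rD}_{K_1}$ — both are routine but need the normalization $M>\|g\|_{C(\bry)}$ and the structure of $\Psi^+(M)$ to be invoked carefully, exactly as in the state-constraint case but with the added $\mu_2$-term carried along throughout.
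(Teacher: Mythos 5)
Your outline reproduces the state-constraint argument correctly (the easy inequality, the cone/minimax step on a compact $\cP^{\rD}_{K}$, the Dirac tests, and the contradiction via \eqref{CPD}), but it has a genuine gap precisely at the point you dismiss as ``routine bookkeeping'': the case $t>1$ at the boundary. Two signs of trouble already appear in your write-up: the minimax functional and the Dirac-test inequality you display drop the term $\gl\lan\mu_2,g\ran$, so the correct boundary inequality is $\psi< g+(u-v^\gl)(z)-\gl^{-1}\ep$ on $\bry$ (the datum $g$ must appear), not $\psi<u(z)-v^\gl(z)-\ep/\gl$. More seriously, after rescaling $w:=u/t$ with $\gth=t^{-1}$, the only boundary information is $w\leq t^{-1}\psi$, and dividing the inequality $\psi< g+(u-v^\gl)(z)-\gl^{-1}\ep$ by $t$ produces
\[
t^{-1}\psi< g+(t^{-1}u-v^\gl)(z)-(t\gl)^{-1}\ep+(t^{-1}-1)\bigl(g-v^\gl(z)\bigr),
\]
and the last term has uncontrolled size and sign: the candidate $\gz:=w+(v^\gl-w)(z)+\gl^{-1}\gth\ep$ then satisfies only $\gz< g-(1-t^{-1})(g-v^\gl(z))$ on $\bry$, which is \emph{not} $\leq g$ unless $g\geq v^\gl(z)$ there. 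With your choice of the compact set ($K_1=K_\gl\cup\{\ga_1\}$ exactly as in Theorem \ref{thm1-sc}) there is no bound on $t$, so the contradiction via \eqref{CPD} cannot be closed when $t$ is large.

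This is exactly where the paper's proof departs from the state-constraint one. To absorb the error $(t-1)(g-v^\gl(z))$ one must force $t-1$ to be small of order $\ep/\gl$, and this requires two ingredients absent from your proposal: (i) an a priori upper bound $\gl u\leq C_2 t$ for members of $\cF^{\rD}(M,\gl)$ (Lemma \ref{lem1-d}, which itself uses \eqref{CPD} and the constraint $\|\chi\|<M$, $\|\psi\|<tM$ from $\Psi^+(M)$); and (ii) an $\ep$-dependent choice of the compact set: constants $C_1,C_2,\gd,L_1$ as in \eqref{C_1C_2}--\eqref{gdL_1}, a threshold $L\geq L_1$ off $K_0$, and the extra points $\ga_0,\ga_1$, so that testing at $\ga_1$ gives $(t-1)L_1<(M+L_0+C_2)t$, hence $t-1\leq 2\gd\leq\ep/\bigl(2\gl(M+L_0)\bigr)$, and then only half of the $\ep$-margin is spent in \eqref{thm1-d-6+}, the other half absorbing $(t-1)(g-v^\gl(z))$. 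Note also that $M>\|g\|_{C(\bry)}$ is used only for the easy inequality (to ensure $(L,g,v^\gl)\in\cF^{\rD}(M,\gl)$); it does not by itself fix the boundary problem above. Without these additions your argument fails in the $t>1$ case, so as written the reverse inequality is not established.
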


We need a lemma for the proof of the theorem above.

\begin{lem}\label{lem1-d}Assume \eqref{F1}, \eqref{F2}, 
and \eqref{CPD}. 
 Let $\gl>0$. 
If $(\phi,\psi,u)\in\cF^{\rD}(\gl)$ and $\phi=t(L+\chi)$, with $t>0$ and $\chi\in C(\tt)$, then 
\[
\gl u\leq \max\left\{t\max_{\tt}(\chi-F(\cdot,0,0)),\,\gl\max_{\bry}\psi \right\} 
\ \ \text{ on }\tt. 
\]
\end{lem}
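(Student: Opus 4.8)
The plan is to compare $u$ against a suitable constant supersolution of the Dirichlet problem \eqref{D'} and then invoke the comparison principle \eqref{CPD}. Since $\phi = t(L+\chi)$, the equation $\gl u + F_\phi[u] = 0$ in $\gO$ rescales, via the identity $F_\phi(x,p,X) = tF(x,t^{-1}p,t^{-1}X) - t\chi(x)$ derived earlier in the paper, into a statement about $w := u/t$, namely that $w$ is a subsolution of $\gl w + F[w] = \chi$ in $\gO$ with $w \leq \psi/t$ on $\bry$. Equivalently, dividing the whole lemma by $t$, it suffices to bound $\gl w$ from above by $\max\{\max_{\tt}(\chi - F(\cdot,0,0)),\ (\gl/t)\max_{\bry}\psi\}$, so without loss of generality I may take $t = 1$ and $\phi = L+\chi$.

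Now set
\[
C := \max\left\{\max_{\tt}\big(\chi - F(\cdot,0,0)\big),\ \gl\max_{\bry}\psi\right\},
\]
and consider the constant function $v \equiv C/\gl$ on $\tt$. I claim $v$ is a supersolution of \eqref{D'} with $\phi = L+\chi$. On the boundary, $v = C/\gl \geq \max_{\bry}\psi \geq \psi$ pointwise on $\bry$, so the boundary inequality for a supersolution holds. In the interior, since $v$ is constant we have $Dv = 0$ and $D^2v = 0$, so $F_\phi[v](x) = F_\phi(x,0,0) = F(x,0,0) - \chi(x)$ (using $\phi = L+\chi$ and the formula for $F_\phi$), and therefore
\[
\gl v(x) + F_\phi[v](x) = C + F(x,0,0) - \chi(x) \geq \max_{\tt}\big(\chi - F(\cdot,0,0)\big) + F(x,0,0) - \chi(x) \geq 0,
\]
as required. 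Thus $v$ is a (classical, hence viscosity) supersolution of \eqref{D'}. Since $u \in \cF^{\rD}(\gl)$ means $u$ is a subsolution of \eqref{D'} with $\phi = L+\chi$, the comparison principle \eqref{CPD} — which applies precisely to $\phi$ of the form $L+\chi$ — yields $u \leq v = C/\gl$ on $\tt$, i.e.\ $\gl u \leq C$ on $\tt$, which is the desired estimate after undoing the normalization $t=1$.

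The only mildly delicate point is the bookkeeping in the rescaling $u \mapsto u/t$: one must check that the boundary datum $\psi$ scales to $\psi/t$ and that $\gl$ is unchanged, so that the final constant reads $t\max_{\tt}(\chi - F(\cdot,0,0))$ and $\gl\max_{\bry}\psi$ rather than some rescaled version; this is routine given the explicit formula $F_\phi(x,p,X) = tF(x,t^{-1}p,t^{-1}X) - t\chi(x)$ recorded in the introduction. Alternatively, one can avoid the rescaling entirely by working directly with $\phi = t(L+\chi)$ and the constant $v \equiv \tfrac{1}{\gl}\max\{t\max_{\tt}(\chi - F(\cdot,0,0)),\ \gl\max_{\bry}\psi\}$; the interior supersolution check then uses $F_\phi(x,0,0) = tF(x,0,0) - t\chi(x)$, and the computation goes through verbatim. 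Either way, comparison principle \eqref{CPD} is the engine and everything else is elementary.
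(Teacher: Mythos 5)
Your proof is correct and takes essentially the same route as the paper: rescale by $t$ (the paper passes to $v:=t^{-1}u$, a subsolution of $\gl v+F[v]=\chi$ in $\gO$ with $v\le t^{-1}\psi$ on $\bry$), compare with a constant supersolution, and conclude via \eqref{CPD}. The only caveat concerns your closing ``alternative'' that skips the rescaling: \eqref{CPD} as stated covers only $\phi=L+\chi$, so applying it directly to $\phi=t(L+\chi)$ with $t\neq 1$ is not literally licensed without first performing exactly the rescaling you propose to avoid.
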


\begin{proof} Let $(\phi,\psi,u)\in\cF^{\rD}(\gl)$, and assume that 
$\phi=t(L+\chi)$ for some $t>0$ and $\chi\in C(\tt)$. 
Recall that
\[\begin{aligned}
F_\phi(x,p,X)&\,=\sup_{\ga\in\cA}\left(-\tr a X-b\cdot p -t(L+\chi)\right)
=t\sup_{\ga\in\cA}\left(-\tr a t^{-1}X-b\cdot t^{-1}p -L\right)-t\chi
\\&\,=tF(x,t^{-1}p,t^{-1}X)-t\chi(x),
\end{aligned}
\]
to find that $u$ is a subsolution of 
\[
\gl u+tF(x,t^{-1}Du,t^{-1}D^2u)=t\chi \ \ \text{ in }\gO.
\]
Hence, the function $v:=t^{-1}u$ is a subsolution of  
\[
\gl v+F[v]=\chi \ \ \text{ in }\gO  
\]
and satisfies $v\leq t^{-1}\psi$ pointwise on $\bry$. 

We set
\[
A=\max\left\{\gl^{-1}\max_{\tt}(\chi-F(\cdot,0,0)),\  
t^{-1}\max_{\bry}\psi\right\}, 
\]
and note that 
the constant function $w:=A$ 
is a supersolution of 
\[
\gl w+F[w]=\chi \ \ \text{ in }\gO \ \ \text{ and } \ \ 
w=t^{-1}\psi \ \ \text{ on }\bry. 
\]
Hence, comparison principle \eqref{CPD} guarantees that 
$v\leq A$ on $\tt$, which yields
\[
\gl u(x)\leq \gl t A = \max\left\{ t\max_{\tt}(\chi-F(\cdot,0,0)),\, 
\gl\max_{\bry}\psi\right\}\ \ \ \text{ for all }\ x\in\tt. \qedhere
\] 
\end{proof}


\begin{proof}[Proof of Theorem \ref{thm1-d}]
Let $v^\gl\in C(\tt)$ be a solution of \eqref{D}. 
Since $\|g\|_{C(\bry)}<M$, we have 
$(L,g,v^\gl)\in\cF^{\rD}(M,\gl)$. Hence, 
owing to the definition of $\cG^{\rD}(M,z,\gl)'$, 
we get 
\[\begin{aligned}
0&\,\leq \lan \mu_1,\,L-\gl v^\gl(z)\ran+\gl \lan\mu_2,\,g-v^\gl(z)\ran
\\&\,=
\lan \mu_1,L\ran+\gl\lan \mu_2,g\ran-\gl v^\gl(z) \ \ \text{ for all }
(\mu_1,\mu_2)\in\cP^{\rD} \cap \cG^{\rD}(M,z,\gl)',
\end{aligned}\]
and, therefore,
\begin{equation}\label{thm1-d-1}
\gl v^\gl(z)\leq \inf_{(\mu_1,\mu_2)\in
\cP^{\rD} \cap \cG^{\rD}(M,z,\gl)'}
\left(\lan\mu_1,\,L\ran+\gl\lan\mu_2,\,g\ran\right).
\end{equation}

Next, we show the reverse inequality:
\begin{equation}\label{thm1-d-1+}
\gl v^\gl(z)\geq \inf_{(\mu_1,\mu_2)
\in\cP^{\rD} \cap \cG^{\rD}(M,z,\gl)'}
\left(\lan\mu_1,\,L\ran+\gl\lan\mu_2,\,g\ran\right).
\end{equation} 
For this, we suppose toward a contradiction that 
\begin{equation}\label{thm1-d-2}
\gl v^\gl(z)+\ep<\inf_{(\mu_1,\mu_2)\in\cP^{\rD}\cap 
\cG^{\rD}(M,z,\gl)'}
\left(\lan \mu_1, L\ran+\gl\lan\mu_2,\,g\ran\right),
\end{equation}
for a small $\ep>0$.

Now, we fix a compact subset $K$ of $\cA$ as follows. 
If $\cA$ is compact, then we set $K:=\cA$. Otherwise, pick 
an $\ga_0\in\cA$ and choose a constant $L_0>0$ so that
\begin{equation}\label{ga_0L_0}
\max_{x\in\tt}L(x,\ga_0)\leq L_0 \ \ \text{ and } \ \ 
(\gl+1) |v^\gl(z)|\leq L_0.
\end{equation}
Then set
\begin{equation} \label{C_1C_2}
C_1:=M+\|F(\cdot,0,0)\|_{C(\tt)}, \ \ \ C_2:=C_1(1+\gl),
\end{equation}
\begin{equation}\label{gdL_1}
\gd:=\min\Big\{\fr 12,\,\fr\ep{4\gl(M+L_0)}\Big\} \ \ \text{ and } \ \ 
L_1:=\fr{M+L_0+C_2}{\gd}.
\end{equation}
Owing to \eqref{L}, we may select a compact set $K_0\subset\cA$ 
so that
\begin{equation}\label{defK_0}
L(x,\ga)\geq \max\{L_0,\,L_1\}(\,=L_1\,) \ \ \ \text{ for all }(x,\ga)\in\tt\tim(\cA\setminus K_0).
\end{equation}
Finally, pick an $\ga_1\in\cA\setminus K_0$ and define 
the compact set $K\subset\cA$ by
\begin{equation}\label{defK}
K:=K_0\cup\{\ga_0,\ga_1\}(\,=K_0\cup\{\ga_1\}\,).
\end{equation}

It follows from  
\eqref{thm1-d-2} that  
\begin{equation}\label{thm1-d-2+}
\gl v^\gl(z)+\ep<\inf_{(\mu_1,\mu_2)\in\cP^{\rD}_{K} \cap 
\cG^{\rD}(M,z,\gl)'}
\left(\lan \mu_1, L\ran+\gl\lan\mu_2,\,g\ran\right).
\end{equation}

By Lemma \ref{lem:dirichlet-convex}, $\cG^{\rD}(M,z,\gl)$ is a 
convex cone with vertex at the origin, and hence, we get 
\[
\inf_{(f_1,f_2)\in\cG^{\rD}(M,z,\gl)}
\left(\lan \mu_1,\,f_1\ran+\gl\lan\mu_2,f_2\ran\right)=
\begin{cases} 0 \ \ &\text{ if }\ (\mu_1,\mu_2)\in\cP^{\rD}_{K}\cap\cG^{\rD}(M,z,\gl)', \\
-\infty &\text{ if }\ (\mu_1,\mu_2)\in \cP^{\rD}_{K}\setminus
\cG^{\rD}(M,z,\gl)',
\end{cases}
\]
and moreover,
\begin{equation}\begin{aligned}
\label{thm1-d-2++} 
\inf_{(\mu_1,\mu_2)\in\cP^{\rD}_{K}\cap 
\cG^{\rD}(M,z,\gl)'}
&\left(\lan \mu_1, L\ran+\gl\lan \mu_2,g\ran\right)
\\&\,\kern-30pt=
\inf_{(\mu_1,\mu_2)\in\cP^{\rD}_{K}}\
\sup_{(f_1,f_2)\in\cG^{\rD}(M,z,\gl)}\left(\lan \mu_1, L\ran+\gl\lan\mu_2,g\ran
-\lan\mu_1,f_1\ran-\gl\lan\mu_2,f_2\ran
\right)
\\&\,\kern-30pt=\inf_{(\mu_1,\mu_2)\in\cP^{\rD}_{K}}\
\sup_{(f_1,f_2)\in\cG^{\rD}(M,z,\gl)}\,\left(\lan \mu_1, L-f_1\ran+\gl\lan\mu_2,g-f_2\ran\right).
\end{aligned}
\end{equation}

Since $K$ is compact, $\cP^{\rD}_{K}$ is a compact convex subset of 
$\cR_{L}\tim\cR_2$, with the topology of weak convergence of measures. 
Also, for any $(f_1,f_2)\in\cG^\rD(M,z,\gl)$, the functional 
\[
\cP_K^\rD\ni
(\mu_1,\mu_2)\mapsto \lan \mu_1, L-f_1\ran+\gl\lan\mu_2,g-f_2\ran\in\R
\]
is continuous. Thus, Sion's minimax theorem implies
\[\begin{aligned}
\min_{(\mu_1,\mu_2)\in\cP^{\rD}_{K}}
&\ \sup_{(f_1,f_2)\in\cG^{\rD}(M,z,\gl)}\,\left(\lan \mu_1, L-f_1\ran+\gl\lan\mu_2,g-f_2\ran\right)
\\&= \sup_{(f_1,f_2)\in\cG^{\rD}(M,z,\gl)}\ \min_{(\mu_1,\mu_2)\in\cP^{\rD}_{K}}
\,\left(\lan \mu_1, L-f_1\ran+\gl\lan\mu_2,g-f_2\ran\right), 
\end{aligned}\]
which, together with \eqref{thm1-d-2+} and \eqref{thm1-d-2++}, yields
\[
\gl v^\gl(z)+\ep <
\ \sup_{(f_1,f_2)\in\cG^{\rD}(M,z,\gl)}\ \min_{(\mu_1,\mu_2)
\in\cP^{\rD}_{K}}
\,\left(\lan \mu_1, L-f_1\ran+\gl\lan\mu_2,g-f_2\ran\right). 
\]
Thus, 
we may choose $(\phi,\psi,u)\in\cF^{\rD}(M,\gl)$ and  $(t,\chi)\in (0,\,\infty)\tim C(\tt)$ 
so that $\phi=t(L+\chi)$, $\|\chi\|_{C(\tt)}< M$, and 
\begin{equation}\label{thm1-d-4}
\gl v^\gl(z)+\ep <
\min_{(\mu_1,\mu_2)\in\cP^{\rD}_{K}}
\,\left(\lan \mu_1, L-\phi+\gl u(z)\ran+\gl\lan\mu_2,g-\psi+u(z)\ran\right).
\end{equation}
Note that, by the definition of $\cF^{\rD}(M,\gl)$, 
the inequality $\|\psi\|_{C(\bry)}< tM$ is valid.

Since $(0,\gd_x)\in \cP^{\rD}_{K}$ for all $x\in\bry$, we get 
from \eqref{thm1-d-4} 
\[
\gl v^\gl(z)+\ep<\gl(g-\psi+u(z))\quad\text{on} \ \bry,
\]
which reads 
\begin{equation}\label{thm1-d-4+}
\psi< g+(u-v^\gl)(z)-\gl^{-1}\ep \ \ \text{ on }\bry.
\end{equation}
Also, since $(\gd_{(x,\ga)},0)\in\cP^\rD_K$ for all $(x,\ga)\in 
\tt\tim K$, we get  from \eqref{thm1-d-4}
\begin{equation}\label{thm1-d-4++}
(t-1)L(x,\ga)+t\chi(x)<\gl (u-v^\gl)(z)-\ep
\ \ \ \text{ for all }\ (x,\ga)\in\tt \tim K. 
\end{equation}

We show that there are a constant $\gth>0$ 
and a subsolution $w=\theta u\in C(\tt)$ to 
\begin{equation}\label{thm1-d-6}
\gl w+F[w]=-\gl (v^\gl-w)(z)-2^{-1}\gth\ep\ \ \text{ in }\gO,   
\end{equation}
and 
\begin{equation}\label{thm1-d-6+}
w\leq g -(v^\gl-w)(z)-(2\gl)^{-1}\gth\ep\ \ \ \text{ on }\bry. 
\end{equation}
Once this is complete, we get a contradiction right away. 
Indeed, the function 
$\gz:=w+(v^\gl-w)(z)+(2\gl)^{-1}\gth\ep$ is a subsolution 
of \eqref{D},
and comparison principle \eqref{CPS} 
yields $\gz\leq v^\gl$, which, after evaluation at $z$, gives 
$\,(2\gl)^{-1}\gth\ep\leq 0$. 
This is a contradiction.

Assume that $\cA$ is compact. Then we have $K=\cA$, and therefore, 
we get from \eqref{thm1-d-4++}  
\[
\phi=L+(t-1)L+t\chi<-\gl(v^\gl-u)(z)-\ep \ \ \ \text{ on }\ \bb,
\]
which ensures that the pair of $w:=u$ and $\gth:=1$ satisfies \eqref{thm1-d-6}, 
while \eqref{thm1-d-6+} for this pair 
is an immediate consequence of \eqref{thm1-d-4+}.

We assume henceforth that $\cA$ is not compact. 
We split our further argument into two cases. 
Consider first the case when $t\leq 1$. 
Recall that $K=K_0\cup\{\ga_0,\ga_1\}$. By \eqref{thm1-d-4++}, we have 
\begin{equation}\label{temp1}
(t-1)L(x,\ga_0)+t\chi(x)<\gl(u-v^\gl)(z)-\ep \ \ \ \text{ for all }\ x\in\tt.
\end{equation}
By the choice of $\ga_0$ and $L_0$, we have
\[
L(x,\ga)\geq L_0\geq L(x,\ga_0) \ \ \ \text{ for all }\ (x,\ga)\in\tt\tim(\cA\setminus K).
\]
Then we combine this with \eqref{temp1}, to get 
\[
(t-1)L(x,\ga)+t\chi(x)<\gl(u-v^\gl)(z)-\ep \ \ \ \text{ for all }\ 
(x,\ga)\in\tt\tim(\cA\setminus K),
\]
which, furthermore, yields together with \eqref{thm1-d-4++} 
\[
(t-1)L(x,\ga)+t\chi(x)<\gl(u-v^\gl)(z)-\ep \ \ \ \text{ for all }\ 
(x,\ga)\in\tt\tim\cA.
\]
From this, we observe
\[
\phi=L+(t-1)L+t\chi<L-\gl (v^\gl-u)(z)-\ep \ \text{ on }\bb,
\]
which shows together with \eqref{thm1-d-4+} the validity of  
\eqref{thm1-d-6} and \eqref{thm1-d-6+}, with $w:=u$ and $\gth:=1$. 

Secondly, we consider the case when $t> 1$. Recall the choice of $L_0$ and $K_0$, to see
\[
|\gl v^\gl(z)|\leq L_0\leq L(x,\ga) \ \ \ \text{ for all }\ (x,\ga)
\in \tt\tim (\cA\setminus K_0). 
\]
Since $\ga_1\in K\setminus K_0$, 
this and \eqref{thm1-d-4++} yield
\begin{equation}\label{temp2}
(t-1)\gl v^\gl(z)+t\chi(x)\leq (t-1)L(x,\ga_1)
+t\chi(x)<\gl (u-v^\gl)(z)-\ep,
\end{equation}
which verifies $t\chi < -t\gl v^\gl(z) + \gl u(z) -\ep$ on $\tt$ and, furthermore,
\[
\phi=t(L+\chi) < tL - t \gl v^\gl(z) + \gl u(z) - \ep \quad \text{on} \ \tt \times \cA.
\]
This shows formally that  
\[
\gl u+\cL_\ga u\leq \phi<t L-t\gl (v^\gl-t^{-1}u)(z) -\ep \ \ 
\text{ in }\tt\tim\cA,\]
from which we deduce that if we set $w:=t^{-1}u$ 
and $\gth=t^{-1}$, then
\eqref{thm1-d-6} holds. 

We continue with the case when $t>1$. 
By Lemma \ref{lem1-d}, in view of \eqref{C_1C_2}, 
we get
\[
\gl u\leq \max\{t(M+\|F(\cdot,0,0)\|_{C(\tt)}),\gl tM\}
\leq C_1(1+\gl)t= C_2t,
\]
and, accordingly,
\begin{equation}\label{temp4}
\gl (u-v^\gl)(z)\leq C_2t+L_0\leq (C_2+L_0)t.
\end{equation}
Combining this with the second inequality of \eqref{temp2}
and \eqref{defK_0}, we get 
\[
(t-1) L_1 < Mt+(L_0+C_2)t=(M+L_0+C_2)t.
\]
Using this and \eqref{gdL_1}, we compute 
\[
t-1<\fr{(M+L_0+C_2)t}{L_1}=\gd t\leq \fr t2,
\]
which shows that $t<2$, and we get from the above
\begin{equation}\label{temp3}
t-1<\gd t\leq 2\gd\leq \fr{\ep}{2\gl(M+L_0)}.
\end{equation}
Hence, we obtain
\[
(t-1)(g-v^\gl(z))\leq (t-1)(M+L_0)<\fr{\ep}{2\gl},
\]
and moreover, by \eqref{thm1-d-4+},
\[\begin{aligned}
t^{-1}\psi&\,<g+(t^{-1}u-v^\gl)(z)-(t\gl)^{-1}\ep
+(t^{-1}-1)(g-v^\gl(z))
\\&\,<g+(t^{-1}u-v^\gl)(z)-(2t\gl)^{-1}\ep \ \ \ \text{ on }\pl\gO,
\end{aligned}
\]
which shows that \eqref{thm1-d-6+} holds with $w:=t^{-1}u$ and $\gth:=t^{-1}$.
Therefore, both \eqref{thm1-d-6} and \eqref{thm1-d-6+} hold with $w:=t^{-1}u$ and $\gth:=t^{-1}$. The proof is complete.
\end{proof} 

\begin{thm} \label{thm1-d0} 
Assume \eqref{F1}, \eqref{F2}, \eqref{CP}, 
\eqref{L} and 
\eqref{CPD}. 
Let $(z,\gl)\in\tt\tim (0,\,\infty)$. 
If $v^\gl\in C(\tt)$ is a 
solution of \eqref{D}, then
\begin{equation}\label{thm1-d0-0}
\gl v^\gl(z)=\min_{(\mu_1,\mu_2)\in \cP^\rD\cap
\cG^\rD(z,\gl)'}\,\left(
\lan \mu_1,\,L\ran+\gl\lan\mu_2,\,g\ran\right).
\end{equation}
\end{thm}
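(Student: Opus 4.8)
The plan is to establish the two inequalities in \eqref{thm1-d0-0} separately; ``$\le$'' is routine, and the real content is the reverse inequality together with attainment of the infimum. For ``$\le$'': since $L=1\cdot L+0\in\Phi^+$, $g\in C(\bry)$ and $F_L=F$, the triple $(L,g,v^\gl)$ lies in $\cF^\rD(\gl)$, so $\big(L-\gl v^\gl(z),\,\gl(g-v^\gl(z))\big)\in\cG^\rD(z,\gl)$; hence for every $(\mu_1,\mu_2)\in\cP^\rD\cap\cG^\rD(z,\gl)'$ the defining inequality of the dual cone and $\mu_1(\tt\tim\cA)+\mu_2(\bry)=1$ give
\[
0\le\lan\mu_1,L-\gl v^\gl(z)\ran+\lan\mu_2,\gl(g-v^\gl(z))\ran=\lan\mu_1,L\ran+\gl\lan\mu_2,g\ran-\gl v^\gl(z).
\]

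For the reverse inequality I would use Theorem \ref{thm1-d}, which asserts $\gl v^\gl(z)=\inf_{(\mu_1,\mu_2)\in\cP^\rD\cap\cG^\rD(M,z,\gl)'}\big(\lan\mu_1,L\ran+\gl\lan\mu_2,g\ran\big)$ for every $M>\|g\|_{C(\bry)}$, together with the identity $\cG^\rD(z,\gl)'=\bigcap_{M>0}\cG^\rD(M,z,\gl)'$. Choose $M_j\to\infty$ with $M_j>\|g\|_{C(\bry)}$ and, for each $j$, a near-minimizer $(\mu_1^j,\mu_2^j)\in\cP^\rD\cap\cG^\rD(M_j,z,\gl)'$ with $\lan\mu_1^j,L\ran+\gl\lan\mu_2^j,g\ran\to\gl v^\gl(z)$. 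Since $|\lan\mu_2^j,g\ran|\le\|g\|_{C(\bry)}$, the numbers $\lan\mu_1^j,L\ran$ are bounded, so, after passing to a subsequence, $\lan\mu_1^j,L\ran\to\rho$ for some $\rho\in\R$. By \eqref{L} and Lemma \ref{basic-cpt} (and the tightness argument in its proof), a further subsequence of $\{\mu_1^j\}$ converges weakly to some $\mu_1^0\in\cR_L^+$ with $\mu_1^0(\tt\tim\cA)=\lim_j\mu_1^j(\tt\tim\cA)$; as $\bry$ is compact, a further subsequence of $\{\mu_2^j\}$ converges weakly to some $\mu_2^0\in\cR_2^+$ with $\mu_2^0(\bry)=\lim_j\mu_2^j(\bry)$, so $\mu_1^0(\tt\tim\cA)+\mu_2^0(\bry)=1$. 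Because $\mu\mapsto\lan\mu,L\ran$ is merely lower semicontinuous (Lemma \ref{basic-lsc}), only $\lan\mu_1^0,L\ran\le\rho$ is available; when $\cA$ is noncompact and this is strict, I would apply Lemma \ref{mod} to $\mu_1^0$ with $m=\rho$ to get $\nu_1$ with $\lan\nu_1,L\ran=\rho$, $\nu_1(\tt\tim\cA)=\mu_1^0(\tt\tim\cA)$ and $\lan\nu_1,\psi\ran=\lan\mu_1^0,\psi\ran$ for all $\psi\in C(\tt)$; otherwise set $\nu_1=\mu_1^0$. Then $(\nu_1,\mu_2^0)\in\cP^\rD$.

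It remains to check that $(\nu_1,\mu_2^0)\in\cG^\rD(z,\gl)'$ and that it attains the value $\gl v^\gl(z)$. Given $(f_1,f_2)\in\cG^\rD(z,\gl)$, write $f_1=t(L+\chi)-\gl u(z)$, $f_2=\gl(\psi-u(z))$ with $(\phi,\psi,u)\in\cF^\rD(\gl)$, $\phi=t(L+\chi)$, $t>0$, $\chi\in C(\tt)$, $\psi\in C(\bry)$; there is $M>0$ with $(f_1,f_2)\in\cG^\rD(M,z,\gl)$, so $(f_1,f_2)\in\cG^\rD(M_j,z,\gl)$ once $M_j\ge M$ and thus $\lan\mu_1^j,f_1\ran+\lan\mu_2^j,f_2\ran\ge0$ for all large $j$. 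Expanding these pairings and letting $j\to\infty$, each term converges: $\lan\mu_1^j,L\ran\to\rho=\lan\nu_1,L\ran$ by the subsequence choice and the choice of $\nu_1$; $\lan\mu_1^j,\chi\ran\to\lan\mu_1^0,\chi\ran=\lan\nu_1,\chi\ran$ and $\mu_1^j(\tt\tim\cA)\to\nu_1(\tt\tim\cA)$ by mass-preserving weak convergence and Lemma \ref{mod}; $\lan\mu_2^j,\psi\ran\to\lan\mu_2^0,\psi\ran$ and $\mu_2^j(\bry)\to\mu_2^0(\bry)$ since $\bry$ is compact. Hence $\lan\nu_1,f_1\ran+\lan\mu_2^0,f_2\ran\ge0$, and since $M$ was arbitrary, $(\nu_1,\mu_2^0)\in\cG^\rD(z,\gl)'$. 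Finally $\lan\nu_1,L\ran+\gl\lan\mu_2^0,g\ran=\rho+\gl\lan\mu_2^0,g\ran=\lim_j\big(\lan\mu_1^j,L\ran+\gl\lan\mu_2^j,g\ran\big)=\gl v^\gl(z)$, so $(\nu_1,\mu_2^0)$ realizes the value $\gl v^\gl(z)$; combined with the first step this proves \eqref{thm1-d0-0}, with the minimum attained.

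The hard part is that $\cG^\rD(M,z,\gl)'$ need not be weakly closed: passing to a weak limit can only decrease $\lan\cdot,L\ran$ (which is merely lower semicontinuous), and this could violate the inequalities $\lan\mu_1,t(L+\chi)-\gl u(z)\ran+\lan\mu_2,\gl(\psi-u(z))\ran\ge0$ that define the dual cone. Restoring the exact limiting value $\rho=\lim_j\lan\mu_1^j,L\ran$ through Lemma \ref{mod} is precisely what repairs this, and it is also what forces the limiting value to be exactly $\gl v^\gl(z)$ rather than merely $\le\gl v^\gl(z)$. Two further points require care: since $\cA$ is noncompact, one must use the coercivity of $L$, via the tightness behind Lemma \ref{basic-cpt}, to prevent escape of mass so that $(\nu_1,\mu_2^0)$ remains a probability pair; and one must track, as $M_j\to\infty$, the inclusions $\cG^\rD(M_j,z,\gl)'\subset\cG^\rD(M,z,\gl)'$ so that the limit lands in the full intersection $\cG^\rD(z,\gl)'$ and not merely in a single $\cG^\rD(M,z,\gl)'$. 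The degenerate situation in which the $\cA$-marginal $\mu_1^0$ of the limit vanishes while $\rho>0$ is dealt with by a direct variant of the construction (injecting an atom with large $L$-value), or one may instead re-run the minimax argument of Theorem \ref{thm1-d} directly; routing through Theorem \ref{thm1-d} plus this compactness step parallels the passage from Theorem \ref{thm1-sc} to Corollary \ref{cor1-sc}.
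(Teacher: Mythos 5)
Your overall route is the paper's own: the inequality ``$\le$'' via $(L,g,v^\gl)\in\cF^\rD(\gl)$, and the reverse inequality with attainment by taking near-minimizers from Theorem \ref{thm1-d} with $M=M_j\to\infty$, extracting a weak limit $(\mu_1^0,\mu_2^0)\in\cP^\rD$ (tightness coming from the coercivity of $L$ and the bounded $L$-integrals, as in Lemmas \ref{basic-cpt} and \ref{cpt-d-1}), and then repairing the lost $L$-mass with Lemma \ref{mod} so that the repaired pair lies in $\cG^\rD(z,\gl)'$ and realizes exactly $\gl v^\gl(z)$. In the cases where $\cA$ is compact or $\mu_1^0\neq 0$ your verification of the dual-cone inequalities and of the value identity is correct and matches the paper (which packages these steps as Lemma \ref{d-cpt}).

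The gap is the degenerate case you only gesture at in your last sentence: $\cA$ noncompact, $\mu_1^0=0$, while $\rho:=\lim_j\lan\mu_1^j,L\ran$ may be strictly positive. There Lemma \ref{mod} is simply unavailable (it needs $\mu(\bb)>0$: with no mass on $\bb$ there is nothing to redistribute), and neither of your proposed fixes works as stated. ``Injecting an atom with large $L$-value'' either raises the total mass above $1$, violating the constraint $(\nu_1,\nu_2)\in\cP^\rD$, or requires transferring mass away from $\mu_2^0$, which perturbs every pairing $\lan\mu_2,\cdot\ran$, hence both the dual-cone inequalities and the objective $\lan\mu_1,L\ran+\gl\lan\mu_2,g\ran$; at best this produces approximate minimizers, not the claimed minimum. ``Re-running the minimax argument of Theorem \ref{thm1-d} directly'' over the full cone $\cG^\rD(z,\gl)$ is precisely what that proof cannot do: the bounds $\|\chi\|_{C(\tt)}<tM$, $\|\psi\|_{C(\bry)}<tM$ are essential in the case $t>1$ (through Lemma \ref{lem1-d} and the choice of the compact set $K$ made before applying Sion's theorem), which is why the paper proves the formula for each fixed $M$ first and only then passes to the limit. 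The paper closes the degenerate case by a separate, nontrivial argument: testing the limiting inequality (membership of $(\rho,0,\mu_2^0)$ in the dagger cone) against the triples $(\gl\gz+L+F[\gz],\gz,\gz)\in\cF^\rD(\gl)$ with $\gz\in C^2(\tt)$, it shows that necessarily $z\in\bry$ and $\mu_2^0=\gd_z$; then $(0,\gd_z)\in\cP^\rD\cap\cG^\rD(z,\gl)'$ holds directly, because every $(\phi,\psi,u)\in\cF^\rD(\gl)$ satisfies $u\le\psi$ pointwise on $\bry$, and since $\rho\ge 0$ one still has $\gl v^\gl(z)\ge\gl\lan\gd_z,g\ran$, so $(0,\gd_z)$ is a minimizer. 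Without an argument of this kind your proof does not cover this case, so as written it is incomplete.
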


We introduce the set $\cG^\rD(z,\gl)^\dag$ for 
$(z,\gl)\in\tt\tim[0,\,\infty)$  (resp., $\cG^\rD(M,z,\gl)^\dag$ for 
$(M,z,\gl)\in(0,\,\infty)\tt\tim[0,\,\infty)$ )
as the set 
of triples $(\rho,\mu_1,\mu_2)\in
[0,\,\infty)\tim\cP^\rD$ 
satisfying 
\[
0\leq t\rho+\lan\mu_1,tL+\chi-\gl u(z)\ran+\gl\lan\mu_2,\psi-u(z)\ran
\]
for all $(tL+\chi,\psi,u)\in\cF^\rD(\gl)$ (resp., $(tL+\chi,\psi,u)\in\cF^\rD(M,\gl)$), with $t>0$ and $\chi\in C(\tt)$. 
Notice that it is required here for $(\rho,\mu_1,\mu_2)\in
\cG^\rD(z,\gl)^\dag$ (resp., $(\rho,\mu_1,\mu_2)\in
\cG^\rD(M,z,\gl)^\dag$) to fulfill the condition 
$(\mu_1,\mu_2)\in\cP^\rD$. We remark that 
the inclusion $(\mu_1,\mu_2)\in
\cP^\rD\cap \cG^\rD(z,\gl)'$ (resp., $(\mu_1,\mu_2)\in
\cP^\rD\cap \cG^\rD(M,z,\gl)'$) holds  
if and only if 
$(0,\mu_1,\mu_2)\in\cG^\rD(z,\gl)^\dag$
(resp., $(0,\mu_1,\mu_2)\in\cG^\rD(M,z,\gl)^\dag$), that 
$\bigcap_{M>0}\cG^\rD(M,z,\gl)^\dag=\cG^\rD(z,\gl)^\dag$, 
and that if $N>M$, then $\cG^\rD(N,z,\gl)^\dag\subset\cG^\rD(M,z,\gl)^\dag$.

The following lemmas are useful for the proof of Theorem 
\ref{thm1-d0}.

\begin{lem}\label{cpt-d-1}
Assume \eqref{F1} and \eqref{L}. Fix $R\in\R$. 
Then, \emph{(i)}\ the functional 
\[
(\mu_1,\mu_2)\mapsto \lan\mu_1,L\ran
\]
is lower semicontinuous on $\cP^\rD$, with the topology 
of weak convergence of measures, and \emph{(ii)}\  
the set 
\[
\cP^\rD_R:=\{(\mu_1,\mu_2)\in\cP^\rD 
\mid \lan \mu_1,L\ran \leq R\}
\] 
is compact in the topology of weak convergence of measures.
\end{lem}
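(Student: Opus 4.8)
The plan is to reduce both claims to the three basic lemmas on weak convergence already established in Section~\ref{sec-pre}, applied to the space $\cX=\bb$ and the coercive function $L$. For part (i), suppose $(\mu_1^j,\mu_2^j)\to(\mu_1,\mu_2)$ weakly in $\cP^\rD$. Since the first-coordinate projection $\cP^\rD\to\cR_L^+$, $(\mu_1,\mu_2)\mapsto\mu_1$, is continuous for the topologies of weak convergence (testing against $\phi\in C_{\rm c}(\bb)$ only sees $\mu_1$), the sequence $\{\mu_1^j\}$ converges weakly to $\mu_1$ in $\cR_{\bb}^+$. Then Lemma~\ref{basic-lsc} gives $\lan\mu_1,L\ran\le\liminf_{j\to\infty}\lan\mu_1^j,L\ran$, which is exactly lower semicontinuity of $(\mu_1,\mu_2)\mapsto\lan\mu_1,L\ran$ on $\cP^\rD$.

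For part (ii), let $\{(\mu_1^j,\mu_2^j)\}_{j\in\N}\subset\cP^\rD_R$. Each $\mu_1^j$ satisfies $\lan\mu_1^j,L\ran\le R$ and $\mu_1^j(\bb)\le1$, so $\mu_1^j\in\cR^+_{L,R,1}$ in the notation of Lemma~\ref{basic-cpt}; hence, after passing to a subsequence, $\{\mu_1^j\}$ converges weakly to some $\mu_1\in\cR^+_{L,R,1}$, so in particular $\lan\mu_1,L\ran\le R$ and $\mu_1(\bb)\le1$. Meanwhile $\{\mu_2^j\}$ is a sequence of nonnegative Radon measures on the compact set $\bry$ with $\mu_2^j(\bry)\le1$, so by the Banach--Alaoglu/Prokhorov theorem (or weak-$*$ compactness of the unit ball of $\cR_{\bry}$) we may extract a further subsequence with $\mu_2^j\to\mu_2$ weakly for some $\mu_2\in\cR^+_{\bry}$ with $\mu_2(\bry)\le1$. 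It remains to verify the normalization $\mu_1(\bb)+\mu_2(\bry)=1$ so that $(\mu_1,\mu_2)\in\cP^\rD$, and then $(\mu_1,\mu_2)\in\cP^\rD_R$ follows.

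The main obstacle is precisely this last point: the constraint $\mu_1^j(\bb)+\mu_2^j(\bry)=1$ is an equality involving total masses, and weak convergence only controls integrals of compactly supported test functions, so mass can a priori escape to infinity in the noncompact factor $\cA$ (only the $\cX=\bb$ part is at issue, since $\bry$ is compact). Coercivity of $L$ is what rescues us: because $\lan\mu_1^j,L\ran\le R$ uniformly, Chebyshev's inequality shows $\{\mu_1^j\}$ is tight, so $\mu_1^j(\bb)\to\mu_1(\bb)$ (no mass escapes), and combined with $\mu_2^j(\bry)\to\mu_2(\bry)$ we conclude $\mu_1(\bb)+\mu_2(\bry)=\lim_{j\to\infty}\big(\mu_1^j(\bb)+\mu_2^j(\bry)\big)=1$. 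This gives $(\mu_1,\mu_2)\in\cP^\rD$, and together with part~(i) (or the bound already noted) $\lan\mu_1,L\ran\le R$, so $(\mu_1,\mu_2)\in\cP^\rD_R$, proving compactness.
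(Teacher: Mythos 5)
Your proof is correct but takes a genuinely different route from the paper. The paper handles both claims at once by forming the disjoint union $\cX$ of $\bb$ and $\bry$ (with a suitable metric making it $\gs$-compact and locally compact), defining $f$ on $\cX$ to be $L$ on $\bb$ and $0$ on $\bry$, and then packaging a pair $(\mu_1,\mu_2)\in\cP^\rD$ into a single probability measure $\tilde\mu$ on $\cX$ via $\tilde\mu(B)=\mu_1(B\cap\bb)+\mu_2(B\cap\bry)$. Since $f$ is coercive on $\cX$, $\lan\tilde\mu,f\ran=\lan\mu_1,L\ran$, and $\cP_{\cX,R}=\{\tilde\mu\mid(\mu_1,\mu_2)\in\cP^\rD_R\}$, both (i) and (ii) fall out directly from Lemmas~\ref{basic-lsc} and~\ref{basic-cpt} with no separate bookkeeping. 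You instead argue component-wise: you apply Lemmas~\ref{basic-lsc} and~\ref{basic-cpt} to the first factor $\mu_1$ on $\bb$, extract a weak-$*$ limit of the second factor $\mu_2$ on the compact $\bry$ directly, and then supply an explicit tightness argument (Chebyshev from $\lan\mu_1^j,L\ran\le R$ and coercivity of $L$) to prevent mass from escaping to infinity in $\cA$ so that the normalization $\mu_1(\bb)+\mu_2(\bry)=1$ passes to the limit. Both routes are valid; the paper's disjoint-union device hides the mass-conservation issue inside the already-proved lemmas (where the same Chebyshev/Prokhorov step lives), while your version surfaces it and treats it by hand, which makes the point where coercivity is used more transparent at the cost of a bit more writing.
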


It should be remarked that, by definition, 
a sequence $\{\mu_1^k,\mu_2^k\}_{k\in\N}\subset 
\cR_1\tim\cR_2$ converges to a $(\mu_1,\mu_2)\in\cR_1\tim\cR_2$
weakly in the sense of measures if and only if
\[
\lim_{k\to\infty}\left(\lan\mu_1^k,\phi\ran+
\lan\mu_2^k,\psi\ran\right)
=\lan\mu_1,\phi\ran+
\lan\mu_2,\psi\ran \ \ \ \text{ for all }\ (\phi,\psi)
\in C_c(\bb)\tim C(\tt).
\]  

\begin{proof} Let $\cX$ denote the disjoint union of 
$\bb$ and $\bry$, and note that $\cX$ has a natural metric, 
for instance, the metric  $d$ on $\cX$ given by   
the formula 
\[
d(\xi,\eta)=
\begin{cases}
d_1(\xi,\eta) \ \ &\text{ if }\ \xi,\eta\in \bb,\\
d_2(\xi,\eta) &\text{ if }\ \xi,\eta\in\bry,\\
1 &\text{ otherwise},
\end{cases}
\]
where $d_1$ is the given metric on $\bb$ 
and $d_2$ is the Euclidean metric on $\R^n$. 
With this metric structure, $\cX$ is $\gs$-compact and locally compact. Note that $B\subset\cX$ is a Borel set if and only if 
$B\cap \bb$ and $B\cap\bry$ are Borel sets in $\bb$ and $\bry$, respectively. 

We define $f\mid \cX\to \R$ by
\[
f(\xi)=
\begin{cases}
L(\xi) \ &\text{ if }\ \xi\in\bb,\\
0 &\text{ if }\ \xi\in\bry,
\end{cases}\]
and set  
\[
\cP_{\cX,R}:=\{\mu\in\cP_{\cX}\mid \lan\mu, f\ran\leq R\},
\]
where $\cP_\cX$ is defined as the set of all Radon probability 
measures on $\cX$, 
Note that $f=+\infty$ at infinity, 
and observe by Lemmas \ref{basic-lsc} and \ref{basic-cpt} that, in the topology of 
weak convergence of measures, 
the functional $\mu\mapsto \lan \mu,f\ran$ is lower semicontinuous on $\cP_{\cX}$, and 
$\cP_{\cX,R}$ is compact. 

For $(\mu_1,\mu_2)\in\cP^{\cD}$, if we put
\[
\tilde\mu(B):=\mu_1(B\cap\bb)+\mu_2(B\cap\bry) 
\ \ \text{ for any Borel set }B\subset\cX, 
\]
then $\tilde \mu$ defines a (unique) Radon probability 
measure on $\cX$. 
With this notation, it is easy to see that
\[
\lan\tilde \mu, f\ran=\lan\mu_1,L\ran
\ \ \ \text{  and } \ \ \ 
\cP_{\cX,R}=\{\tilde\mu\mid (\mu_1,\mu_2)\in\cP^\rD_R\}.
\]
Hence, we conclude that, in the topology of weak convergence 
of measures, the functional $(\mu_1,\mu_2)\mapsto 
\lan\mu_1,L\ran$ 
is lower semicontinuous on $\cP^\rD$ and the set   
$\cP_R^{\rD}$ is compact. 
\end{proof}

\begin{lem} \label{d-cpt} 
Assume \eqref{F1}, \eqref{F2} and \eqref{L}. 
Let $(M,z,\gl)\in(0,\,\infty)\tim\tt\tim[0,\,\infty)$ 
and let $\{\gl_j\}_{j\in\N}\subset[0,\,\infty)$ be a sequence converging to $\gl$.  Let 
$\{(\mu_1^j,\mu_2^j)\}_{j\in\N}$ 
be a sequence of measures such that $(\mu_1^j,\mu_2^j)\in\cP^\rD\cap
\cG^\rD(M,z,\gl_j)'$ for all $j\in\N$. Assume 
that the sequence $\{\du{\mu_1^j, L}\}_{j\in\N}$ is convergent  
and that $\{(\mu_1^j,\mu_2^j)\}_{j\in\N}$ converges to some 
$(\mu_1^0,\mu_2^0)\in\cP^\rD$ weakly in the sense of measures. 
\emph{(i)}\ If we set 
\[\rho=\lim_{j\to\infty}\du{\mu_1^j,L}-\du{\mu_1^0,L},\] 
then $(\rho,\mu_1^0,\mu_2^0)\in\cG^\rD(M,z,\gl)^\dag$. 
\emph{(ii)}\ Assume in addition 
that $(\rho,\mu_1^0,\mu_2^0)\in\cG^\rD(z,\gl)^\dag$. 
If either $\cA$ is compact or $\mu_1^0\not=0$, then  
there exist a pair $(\nu_1,\nu_2)\in\cP^\rD\cap
\cG^\rD(z,\gl)'$ of measures 
such that $\,\lan\nu_1,L\ran=\rho+\du{\mu_1^0,L}\,$ and, for all $\, (\psi,\eta)\in C(\tt)\tim C(\bry)$, 
\[
\lan\mu_1^{0},\psi\ran+\du{\mu_2^{0},\eta}
=\lan\nu_1,\psi\ran+\du{\nu_2,\eta}.
\]
\end{lem}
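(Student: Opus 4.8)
The plan is to mimic the proof of Lemma~\ref{sc-cpt}, carried out in the pair-of-measures framework of the Dirichlet problem and phrased through the ``$\dag$'' description of the dual cone. First a preliminary observation used throughout: since $\bry$ is compact, testing the assumed weak convergence against $(0,\mathbf 1)$ gives $\mu_2^j(\bry)\to\mu_2^0(\bry)$, and then the membership $(\mu_1^j,\mu_2^j)\in\cP^\rD$ forces $\mu_1^j(\bb)=1-\mu_2^j(\bry)\to 1-\mu_2^0(\bry)=\mu_1^0(\bb)$. Combined with the vague convergence of $\{\mu_1^j\}$ against $C_{\rm c}(\bb)$, the elementary fact that vague convergence together with convergence of total masses implies narrow convergence yields $\lan\mu_1^j,\zeta\ran\to\lan\mu_1^0,\zeta\ran$ for every bounded continuous $\zeta$ on $\bb$, in particular for every $\zeta\in C(\tt)$ viewed as a function of $x$ alone. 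Moreover, the lower semicontinuity in Lemma~\ref{cpt-d-1} (equivalently Lemma~\ref{basic-lsc}) gives $\lan\mu_1^0,L\ran\le\lim_j\lan\mu_1^j,L\ran$, so $\rho\ge 0$ and $(\rho,\mu_1^0,\mu_2^0)\in[0,\infty)\tim\cP^\rD$.

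For (i), fix $(tL+\chi,\psi,u)\in\cF^\rD(M,\gl)$. As in the proof of Lemma~\ref{sc-cpt}, for each $j$ the triple $\bigl(tL+\chi+(\gl_j-\gl)u,\ \psi,\ u\bigr)$ satisfies the subsolution requirement of \eqref{D'} with $\gl$ replaced by $\gl_j$, since $F_{\phi+(\gl_j-\gl)u}[u]=F_\phi[u]-(\gl_j-\gl)u$ and the boundary inequality $u\le\psi$ on $\bry$ is untouched; and because $\|\chi\|_{C(\tt)}<tM$ strictly, we have $\|\chi+(\gl_j-\gl)u\|_{C(\tt)}<tM$ for all $j$ large (depending on the triple), so this triple lies in $\cF^\rD(M,\gl_j)$. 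Feeding it into $(0,\mu_1^j,\mu_2^j)\in\cG^\rD(M,z,\gl_j)^\dag$ and letting $j\to\infty$ — using $\lan\mu_1^j,L\ran\to\rho+\lan\mu_1^0,L\ran$, the narrow convergence above applied to $\chi$, $u$, and (on $\bry$) $\psi$, the bound $|\lan\mu_1^j,u\ran|\le\|u\|_{C(\tt)}$ which kills the $(\gl_j-\gl)$-term, and $\mu_1^j(\bb)\to\mu_1^0(\bb)$, $\mu_2^j(\bry)\to\mu_2^0(\bry)$ — produces exactly
\[
0\le t\rho+\lan\mu_1^0,tL+\chi-\gl u(z)\ran+\gl\lan\mu_2^0,\psi-u(z)\ran,
\]
i.e.\ $(\rho,\mu_1^0,\mu_2^0)\in\cG^\rD(M,z,\gl)^\dag$.

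For (ii), if $\rho=0$ take $\nu_i:=\mu_i^0$: the stated identity is trivial, $\lan\nu_1,L\ran=\rho+\lan\mu_1^0,L\ran$, and the added hypothesis $(0,\mu_1^0,\mu_2^0)\in\cG^\rD(z,\gl)^\dag$ is exactly $(\nu_1,\nu_2)\in\cP^\rD\cap\cG^\rD(z,\gl)'$. If $\rho>0$, then $\cA$ is not compact (otherwise $L$ would be bounded continuous on $\bb$, forcing $\lan\mu_1^j,L\ran\to\lan\mu_1^0,L\ran$ and $\rho=0$), so by hypothesis $\mu_1^0\ne 0$; apply Lemma~\ref{mod} with $\mu=\mu_1^0$ and $m=\rho+\lan\mu_1^0,L\ran>\lan\mu_1^0,L\ran$ to obtain $\nu_1\in\cR_L^+$ with $\nu_1(\bb)=\mu_1^0(\bb)$, $\lan\nu_1,L\ran=\rho+\lan\mu_1^0,L\ran$ and $\lan\nu_1,\psi\ran=\lan\mu_1^0,\psi\ran$ for all $\psi\in C(\tt)$, and set $\nu_2:=\mu_2^0$. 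Then $(\nu_1,\nu_2)\in\cP^\rD$, the marginal identity holds, and for any $(tL+\chi,\psi,u)\in\cF^\rD(\gl)$ the three properties of $\nu_1$ give
\[
\lan\nu_1,tL+\chi-\gl u(z)\ran+\gl\lan\nu_2,\psi-u(z)\ran
= t\rho+\lan\mu_1^0,tL+\chi-\gl u(z)\ran+\gl\lan\mu_2^0,\psi-u(z)\ran\ge 0,
\]
the last inequality being the hypothesis $(\rho,\mu_1^0,\mu_2^0)\in\cG^\rD(z,\gl)^\dag$; hence $(0,\nu_1,\nu_2)\in\cG^\rD(z,\gl)^\dag$, that is, $(\nu_1,\nu_2)\in\cP^\rD\cap\cG^\rD(z,\gl)'$.

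The one point needing care is the upgrade from vague to narrow convergence of $\{\mu_1^j\}$, which is what allows passing to the limit against the $\cA$-independent functions $\chi$ and $u$ in part~(i); it hinges on the conservation of mass $\mu_1^j(\bb)\to\mu_1^0(\bb)$, which is read off from the boundary component through the $\cP^\rD$-normalization. The remainder is bookkeeping with the cone / dual-cone identities together with the single application of Lemma~\ref{mod} in part~(ii).
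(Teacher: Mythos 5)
Your proof is correct and follows essentially the same route as the paper's: for (i) you perturb the test triple to $(tL+\chi+(\gl_j-\gl)u,\psi,u)\in\cF^\rD(M,\gl_j)$ for large $j$, apply the dual-cone inequality for $(\mu_1^j,\mu_2^j)$ and pass to the limit, and for (ii) you either keep $(\mu_1^0,\mu_2^0)$ or modify the first component with Lemma~\ref{mod}, exactly as in the paper. Your explicit upgrade from vague to narrow convergence via conservation of mass through the $\cP^\rD$-normalization, and your handling of the case $\rho=0$ with $\cA$ noncompact (where Lemma~\ref{mod} is not needed and indeed not applicable), are welcome refinements of points the paper leaves implicit, but they do not change the argument.
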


The proof of the lemma above is similar to that of Lemma \ref{sc-cpt}, but we give the proof 
for completeness.

\begin{proof} Note first that the lower semicontinuity of 
the functional $(\mu_1,\mu_2)\mapsto\lan\mu_1,L\ran$ on $\cP^\rD$,
as claimed by Lemma \ref{cpt-d-1}, implies that $\rho\geq 0$. 

To check the property that 
$(\rho,\mu_1^0,\mu_2^0)\in\cG^\rD(M,z,\gl)^\dag$,
let $f=(f_1,f_2)\in\cG^\rD(M,z,\gl)$, and select 
$(\phi,\psi,u)\in\cF^\rD(\gl)$, $t>0$ and $\chi\in C(\tt)$ 
so that $f=(\phi-\gl u(z),\gl(\psi-u(z)))$ and $\phi=tL+\chi$. 

Let $j\in\N$ and note that 
$(\phi+(\gl_j-\gl)u,\psi,u)\in \cF^\rD(\gl_j)$. 
Hence, if $j$ is large enough, 
then $(\phi+(\gl_j-\gl)u,\psi,u)\in \cF^\rD(M, \gl_j)$ and we get  
\[\begin{aligned}
0&\,\leq \lan\mu_1^j,\phi+(\gl_j-\gl)u-\gl_j u(z)\ran+
\du{\mu_2^j,\gl_j(\psi-u(z))}
\\&\,=t\lan\mu_1^j,L\ran+
\lan\mu_1^j,\chi+(\gl_j-\gl)u-\gl_j u(z)\ran
+\gl_j\du{\mu_2^j,\psi-u(z)}.
\end{aligned}\]   
Sending $j\to\infty$ yields 
\[
0\leq t(\rho+\du{\mu_1^0,L}) 
+\lan\mu_1^0,\chi-\gl u(z)\ran+\du{\mu_2^0,\gl(\psi-u(z))},
\]
which ensures that $(\rho,\mu_1^0,\mu_2^0)\in\cG^\rD(M,z,\gl)^\dag$, 
proving assertion (i). 

Next, we assume that $(\rho,\mu_1^0,\mu_2^0)\in\cP^\rD\cap\cG^\rD(z,\gl)'$ and show the existence of $(\nu_1,\nu_2)$ having the properties 
described in assertion (ii). 

Since $(\rho,\mu_1^0,\mu_2^0)\in\cG^\rD(z,\gl)^\dag$, we have 
\begin{equation}\label{d-cpt-1}
0\leq t\rho 
+\lan\mu_1^0,tL+\chi-\gl u(z)\ran+\du{\mu_2^0,\gl(\psi-u(z))}
\end{equation}
for all $(\phi,\psi,u)\in\cF^\rD(\gl)$, where $\phi=tL+\chi$, $t>0$ and $\chi\in C(\tt)$. 

If $\cA$ is compact, the weak convergence
of $\{(\mu_1^j,\mu_2^j)\}$ implies that $\rho=0$. Thus, in this case, 
the pair $(\nu_1,\nu_2):=(\mu_1^0,\mu_2^0)$ has all the required properties. 

Now, assume that $\cA$ is not compact and $\mu_1^0\not=0$.  
Lemma \ref{mod} ensures that there is $\tilde\mu_1^0\in\cR_L^+$ 
such that $\tilde\mu_1^0(\bb)=\mu_1^0(\bb)$, 
$\lan\tilde\mu_1^0,L\ran=\rho$ and 
$\lan\tilde\mu_1^0,\psi\ran=\lan\mu_1^0,\psi\ran$ for all 
$\psi\in C(\tt)$. 
We define $(\nu_1,\nu_2)\in\cR_L^+\tim\cR_2^+$ 
by $(\nu_1,\nu_2)=(\tilde\mu_1^0,\mu_2)$. 
It is obvious that $(\nu_1,\nu_2)\in \cP^\rD$, 
$\lan\nu_1,L\ran=\rho+\du{\mu_1^0,L}$ and 
$\lan\nu_1,\psi\ran+\du{\nu_1,\eta}=\lan\mu_1^0,\psi\ran+\du{\mu_2^0,\eta}$ for all 
$(\psi,\eta)\in C(\tt)\tim C(\bry)$. These properties and inequality \eqref{d-cpt-1} imply that $(\nu_1,\nu_2)\in \cG^\rD(z,\gl)'$, 
which completes the proof.  
\end{proof}

\begin{proof}[Proof of Theorem \ref{thm1-d0}] 
Since $\cG^\rD(z,\gl)'\subset \cG^\rD(M,z,\gl)'$ 
for any $M>0$, Theorem \ref{thm1-d} yields 
\begin{equation} \label{thm1-dp-1}
\gl v^\gl(z)\leq \inf_{(\mu_1,\mu_2)\in\cP^\rD\cap\cG^\rD(z,\gl)'}
\left(\du{\mu_1,L}+\gl\du{\mu_2,g}\right).
\end{equation}

To prove the reverse inequality of \eqref{thm1-dp-1}, 
in view of Theorem \ref{thm1-d}, we may select a sequence 
$\{(\mu_1^k, \mu_2^k)\}_{k\in\N}$ of pairs of measures on $\bb$ and on $\bry$ so that for all $k\in\N$, 
\begin{equation}\label{thm1-d-00}
\gl v^\gl(z)+\fr 1k>\lan\mu_1^k,L\ran+\gl\lan\mu_2^k,g\ran
\ \ \ \text{ and } \ \ \ (\mu_1^k,\mu_2^k)\in\cP^\rD\cap\cG^\rD(k,z,\gl)'. 
\end{equation}

Thanks to Lemma \ref{cpt-d-1}, there exists a subsequence
$\{(\mu_1^{k_j},\mu_2^{k_j})\}_{j\in\N}$ 
of $\{(\mu_1^k,\mu_2^k)\}$ such that 
$\{(\mu_1^{k_j},\mu_2^{k_j})\}_{j\in\N}$ converges to some 
$(\mu_1^0,\mu_2^0)\in \cP^\rD$ weakly in the sense of 
measures. Since $\{\du{\mu_1^k,L}\}_{k\in\N}$ is bounded, we may 
assume that $\{\du{\mu_1^{k_j},L}\}_{j\in\N}$ is convergent. 

We set $\rho=\lim_{j\to\infty}\du{\mu_1^{k_j},L}-\du{\mu_1^0,L}$, and 
note that 
\begin{equation}\label{thm1-dp-2}
\gl v^\gl(z)\geq \lim_{k\to\infty}\du{\mu_1^k,L}+\gl\du{\mu_2^0,g}
=\rho+\du{\mu_1^0,L}+\gl\du{\mu_2^0,g},
\end{equation}
and, by Lemma \ref{d-cpt}, that $(\rho,\mu_1^0,\mu_2^0)\in\cG^\rD(m,z,\gl)^\dag$ for all $m\in\N$, which implies that $(\rho,\mu_1^0,\mu_2^0)\in\cG^\rD(z,\gl)^\dag$. 

If either $\cA$ is compact or $\mu_1^0\not=0$, 
then Lemma \ref{d-cpt} guarantees that there exists 
$(\nu_1,\nu_2)\in\cP^\rD\cap\cG^\rD(z,\gl)'$ such that $\du{\nu_1,L}=\rho+\du{\mu_1^0,L}$ and $\du{\nu_1,\psi}+\du{\nu_2,\eta}=\du{\mu_1^0,\psi}+\du{\mu_2^0,\eta}$ for all $(\psi,\eta)\in C(\tt)\tim C(\bry)$.
These identities combined with \eqref{thm1-dp-2} yield 
\[
\gl v^\gl(z)\geq \du{\nu_1,L}+\gl\du{\nu_2,g},
\]
which shows that \eqref{thm1-d0-0} holds, with $(\nu_1,\nu_2)$ 
being a minimizer of the right hand side of \eqref{thm1-d0-0}.

It remains the case when $\cA$ is not compact and $\mu_1^0=0$. 
To treat this case, we observe first that $z\in\bry$ and 
$\mu_2^0=\gd_z$. Indeed, otherwise, there exists $\gz\in C^2(\tt)$ such that $\du{\mu_2^0,\gz-\gz(z)}\not=0$. By replacing 
$\gz$ by a constant multiple of $\gz$, 
we may assume that $\rho+\gl\du{\mu_2^0,\gz-\gz(z)}<0$. Noting that $(\gl\gz+L+F[\gz],\gz,\gz)\in\cF^\rD(\gl)$, we get
\[
0\leq \rho+\du{\mu_1^0,\gl\gz+L+F[\gz]-\gl\gz(z)}+\gl\du{\mu_2^0,\gz-\gz(z)}=\rho+\gl\du{\mu_2^0,\gz-\gz(z)},
\]  
which contradicts the choice of $\gz$.

Thus, we have $z\in\bry$ and $\mu_2^0=\gd_z$. 
Since 
\[
\du{\mu_1^0,\phi-\gl u(z)}+\gl\du{\mu_2^0,\psi-u(z)}
=\gl(\psi-u)(z)\geq 0
\]
for any $(\phi,\psi,u)\in \cF^\rD(\gl)$, we have 
$(0,\mu_1^0,\mu_2^0)\in\cG^\rD(z,\gl)^\dag$, which implies that 
$(\mu_1^0,\mu_2^0)\in\cP^\rD\cap\cG^\rD(z,\gl)'$. Thus, 
we find from \eqref{thm1-dp-1} and \eqref{thm1-dp-2} that 
\eqref{thm1-d0-0} holds with $(\mu_1^0,\mu_2^0)$ as a minimizer of the right hand side of \eqref{thm1-d0-0}. The proof is complete.  
\end{proof}
 
\subsection{Formula for the critical value} 
According to Propositions \ref{prop1-d0} and \ref{prop1-d},
under the hypotheses \eqref{F1}, \eqref{F2}, \eqref{CPD}, 
\eqref{SLD} and \eqref{EC}, there exists a unique number 
$c^*\in[0,\,\infty)$ such that, if $c^*=0$, then 
(ED$_0$) has a solution in $C(\tt)$ 
and, if $c^*>0$, then (ES$_{c^*}$) 
has a solution in $C(\tt)$. 
We call this number $c^*$ the critical value for the 
vanishing discount problem for \eqref{D} and denote it by $c_\rD$.    
Then, owing to Proposition \ref{prop1-d0}, we have 
\begin{equation}\label{lim-cD}
\lim_{\gl\to 0+}\gl v^\gl(x)=-c_\rD \ \ \ \text{ uniformly on }\ \tt, 
\end{equation}
where $v^\gl$ is the solution of \eqref{D}.

The following proposition gives a formula for the critical value $c_\rD$ similar to \eqref{critical-value-s}. 

\begin{prop}\label{prop-d-cv} Assume \eqref{F1},
\eqref{F2}, \eqref{L}, \eqref{CPD}, 
\eqref{SLD}, and \eqref{EC}. Then,
\begin{equation}\label{critical-value-d-1}
c_\rD=\min\{c\geq 0\mid \emph{(ED$_c$)}
\text{ has a solution in }C(\tt)\}.
\end{equation}
\end{prop}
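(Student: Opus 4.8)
The plan is to prove that the set $S:=\{c\ge 0\mid\text{(ED$_c$) has a solution in }C(\tt)\}$ is nonempty and that $\min S=c_\rD$; since $S\subset[0,\infty)$ this reduces to two inclusions, namely (a) $c_\rD\in S$ and (b) $c\ge c_\rD$ for every $c\in S$. Both directions will be handled by a shift-and-compare argument, exploiting the structural fact that $F$ carries no zeroth-order term, so that $F[w-C]=F[w]$ for every constant $C$.

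For (a) I would distinguish the two cases built into the definition of $c_\rD$. If $c_\rD=0$, then (ED$_0$) has a solution directly by the definition of $c_\rD$ recalled before the statement, so $c_\rD\in S$. If $c_\rD>0$, then (ES$_{c_\rD}$) has a solution $w\in C(\tt)$; I would pick $C>0$ large enough that $w-C\le g$ on $\bry$ and claim that $w-C$ solves (ED$_{c_\rD}$). Indeed, the first line of \eqref{ES} gives $F[w-C]=F[w]\le c_\rD$ in $\gO$, which together with $w-C\le g$ on $\bry$ makes $w-C$ a subsolution of (ED$_{c_\rD}$); and the second line of \eqref{ES} says $w-C$ is a supersolution of $F[\cdot]=c_\rD$ on all of $\tt$, so at each boundary point the test-function inequality $F(x_0,D\varphi,D^2\varphi)\ge c_\rD$ holds, which is exactly the first alternative in the Dirichlet boundary condition; hence $w-C$ is a viscosity supersolution of (ED$_{c_\rD}$) as well, and therefore a solution. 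Thus $c_\rD\in S$.

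For (b), given $c\in S$ with a solution $u\in C(\tt)$ of (ED$_c$), I would fix $\gl>0$, set $K:=\|u\|_{C(\tt)}+\gl^{-1}c$, and check by a routine test-function computation that the shift $u-K$ is a subsolution of $\gl w+F[w]=0$ in $\gO$ (the term $\gl^{-1}c$ in $K$ is precisely what absorbs the right-hand side $c$), while $u-K=g-K\le g$ on $\bry$; so $u-K$ is a subsolution of \eqref{D}. Comparing with the solution $v^\gl$ of \eqref{D} via \eqref{CPD} (applied with $\phi=L$, i.e.\ $\chi\equiv0$) gives $u-K\le v^\gl$ on $\tt$, hence $\gl u(x)-\gl\|u\|_{C(\tt)}-c\le\gl v^\gl(x)$; letting $\gl\to0+$ and using \eqref{lim-cD} yields $-c\le-c_\rD$, i.e.\ $c\ge c_\rD$. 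Combining (a) and (b) gives $\min S=c_\rD$, which is \eqref{critical-value-d-1}.

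I expect the only delicate point to be the boundary bookkeeping in step (a): one must verify that the state-constraint supersolution condition on $\bry$ is at least as strong as the Dirichlet supersolution condition there, and that subtracting a constant — harmless for $F$ — is exactly what reconciles a state-constraint solution with the inhomogeneous data $g$. Everything else is sign-tracking with the factor $\gl^{-1}c$ and a correct invocation of the comparison principle \eqref{CPD}.
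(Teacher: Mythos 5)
Your proof is correct, and half of it diverges from the paper's in an interesting way. Part (a) coincides with the paper's argument for the inequality ``$\min\le c_\rD$'': both shift a solution of (ES$_{c_\rD}$) down by a large constant and use that the state--constraint supersolution property on $\tt$ is stronger than the relaxed Dirichlet supersolution condition, so the shifted function solves (ED$_{c_\rD}$); your boundary bookkeeping here is exactly the point the paper relies on. For the reverse inequality the routes differ: the paper argues by contradiction, assuming some $c\in[d,c_\rD)$ admits a subsolution $v$ of (ED$_c$), picking an intermediate level $c_0\in(c,c_\rD)$ and a small discount $\gl$ so that $v$ and the shifted state--constraint solution $u-C$ become sub- and supersolutions of \eqref{D} with $L$ replaced by $L+c_0$, and then letting $C\to\infty$ after applying \eqref{CPD}; it never uses the limit \eqref{lim-cD} in this step. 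You instead compare $u-\|u\|_{C(\tt)}-\gl^{-1}c$ directly with the discounted solution $v^\gl$ via \eqref{CPD} and pass to the limit using \eqref{lim-cD}, which is legitimately available at this point (it follows from Propositions \ref{prop1-d0} and \ref{prop1-d}, whose hypotheses are all assumed here). Your version is more direct -- no contradiction, no auxiliary level $c_0$, no arbitrarily large shift -- at the price of invoking the vanishing-discount limit and hence \eqref{SLD} and \eqref{EC} through it, while the paper's argument for this half stays entirely at the level of the ergodic problems plus the comparison principle. Both yield attainment of the minimum through part (a), so the statement with ``$\min$'' is fully justified either way.
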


\begin{proof} We set
\[
d:=\inf\{c\geq 0\mid \text{(ED$_c$)}
\text{ has a solution in }C(\tt)\}.
\]

By the definition of $c_\rD$ above, which is based on  Propositions \ref{prop1-d} and \ref{prop1-d0}, we have 
\begin{equation}\label{cD=lim}
\lim_{\gl\to 0}(-\gl v^\gl(x))=c_\rD \ \ \ \text{ uniformly on }\
\tt.
\end{equation}
Moreover, Proposition \ref{prop1-d} 
ensures that, if $c_\rD=0$, then (ED$_{c_{\rD}}$) has a solution in $C(\tt)$ and, if $c_\rD>0$, then 
(ES$_{c_\rD}$) has 
a solution in $C(\tt)$.  
Let $u\in C(\tt)$ be a solution of (ED$_0$) and 
(ES$_{c_\rD}$), respectively, if $c_\rD=0$ and 
$c_\rD>0$. Note that, if $c_\rD>0$, then the function 
$u-C$, with $C>0$ chosen so large that $u-C\leq g$ on $\bry$, 
is a solution of (ED$_{c_\rD}$). 
It is now clear that $d\leq c_\rD$. 

It is enough to show that $d\geq c_\rD$. 
Suppose to the contrary that $d<c_\rD$, and get a contradiction. 
We choose $(v,c)\in C(\tt)\tim[d,c_\rD)$ so that 
$v$ is a subsolution of (ED$_c$). 
Fix any $C>0$ so that 
$u-C\leq g$ on $\bry$. Since $c_\rD>0$, $u$ is a solution of (ES$_{c_\rD}$), and the function $w:=u-C$ is a supersolution of 
(ED$_{c_\rD}$). 
Fix $c_0\in(c,\,c_\rD)$ and select 
$\gl>0$ sufficiently small so that $\gl v\leq c_0-c$ 
and $-\gl w\leq c_\rD-c_0$ on $\tt$, which means that 
$v$ and $w$ are a subsolution and a supersolution, respectively, of 
\eqref{D}, with $L$ replaced by $L+c_0$. 
By \eqref{CPD}, we get $v\leq w=u-C$ on $\tt$, but this gives a 
contradiction when $C$ is sufficiently large.  
\end{proof}

Another formula for the critical value $c_\rD$ 
is stated in the next theorem.

Henceforth, we write
\[
\cP^\rD_1=\{\mu_1\mid (\mu_1,\mu_2)\in\cP^\rD
\cap\cG^\rD(0)'\} \ \ \text{ and }
\ \ \cP^\rD_{1,0}=\{\mu\in\cP^\rD_1\mid \mu(\tt\tim\cA)=1\}.
\]
A crucial remark here is that 
\begin{equation}\label{equiDS}
\cP^\rD_{1,0}=\cP^\rS\cap\cG^\rS(0)'.
\end{equation}
Indeed, the argument below guarantees the validity 
of the identity above. It is obvious that, for $\phi\in C(\bb)$, 
inclusion $(\phi,\psi,u)\in \cF^\rD(0)$ holds for some 
$(\psi,u)\in C(\bb)\tim C(\bry)$ if and only if 
$(\phi,u,u)\in\cF^\rD(0)$. Hence, for $\phi\in C(\bb)$,
we have $(\phi,0)\in\cG^\rD(0)$ if and only if 
$(\phi,u,u)\in\cF^\rD(0)$ for some $u\in C(\tt)$, 
which is if and only if $(\phi,u)\in\cF^\rS(0)$ 
for some $u\in C(\tt)$, and moreover, this is if and only if 
$\phi\in\cG^\rS(0)$.  Using these observations, 
it is easy to see that, for any $\mu\in \cP_L$, 
$\mu\in\cP^\rS\cap\cG^\rS(0)'$ if and only if 
$\mu\in\cG^\rS(0)'$, which is if and only if 
$\lan\mu,\phi\ran\geq 0$ \ for all $\, (\phi,0)\in\cG^\rD(0)$.
This is equivalent to the condition that
$(\mu,0)\in\cP^\rD\cap\cG^\rD(0)'$, which verifies 
that \eqref{equiDS} is valid.

\begin{thm} \label{thm3-d} 
Assume \eqref{F1}, \eqref{F2}, \eqref{CP},   
\eqref{L}, \eqref{CPD}, \eqref{SLD}, and \eqref{EC}.  
Then,
\begin{equation}\label{thm3-d-main}
-c_\rD
=\min_{\mu\in\cP^\rD_1}\lan\mu,L\ran.
\end{equation}
Furthermore, if $c_\rD>0$, then
\begin{equation}\label{thm3-d-main'}
-c_\rD=\min_{\mu\in\cP^\rS\cap\cG^\rS(0)'}
\lan\mu,L\ran.
\end{equation}
\end{thm}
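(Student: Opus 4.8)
The plan is to combine the representation formula of Theorem~\ref{thm1-d0} with a weak-compactness argument, in the spirit of the proof of Theorem~\ref{thm2-sc}, and then to transfer the resulting minimizer from $\cP^\rD_1$ to $\cP^\rS\cap\cG^\rS(0)'$ by a renormalization exploiting that $-c_\rD<0$. I would first check that $\lan\mu,L\ran\geq -c_\rD$ for every $\mu=\mu_1\in\cP^\rD_1$, with witness $\mu_2$ so that $(\mu_1,\mu_2)\in\cP^\rD\cap\cG^\rD(0)'$. Letting $v^\gl\in C(\tt)$ be the solution of \eqref{D}, I note that, since $v^\gl$ solves $\gl v^\gl+F[v^\gl]=0$ in $\gO$, it is a subsolution of $F_{L-\gl v^\gl}[u]=0$ in $\gO$ and it satisfies $v^\gl=g$ on $\bry$; hence $(L-\gl v^\gl,\,g,\,v^\gl)\in\cF^\rD(0)$ and $(L-\gl v^\gl,\,0)\in\cG^\rD(0)$, so the defining inequality of $\cG^\rD(0)'$ gives $\lan\mu_1,L\ran\geq\lan\mu_1,\gl v^\gl\ran$. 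Because $\gl v^\gl$ depends only on $x$, satisfies $\gl v^\gl\geq -c_\rD-\eta_\gl$ on $\tt$ with $\eta_\gl:=\|\gl v^\gl+c_\rD\|_{C(\tt)}\to0$ by \eqref{lim-cD}, and $\mu_1(\tt\tim\cA)\leq1$, the right-hand side is $\geq -c_\rD-\eta_\gl$; letting $\gl\to0$ yields the claim. Noting also that $(0,\gd_{x_0})\in\cP^\rD\cap\cG^\rD(0)'$ for any $x_0\in\bry$, so that $0\in\cP^\rD_1$ with $\lan 0,L\ran=0$, this already proves \eqref{thm3-d-main} when $c_\rD=0$.

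Assume now $c_\rD>0$ and produce a minimizer. Fix $z\in\tt$ and a sequence $\gl_j\downarrow0$, and use Theorem~\ref{thm1-d0} to pick minimizers $(\mu_1^j,\mu_2^j)\in\cP^\rD\cap\cG^\rD(z,\gl_j)'$ with $\gl_j v^{\gl_j}(z)=\lan\mu_1^j,L\ran+\gl_j\lan\mu_2^j,g\ran$; these pairs lie in $\cG^\rD(M,z,\gl_j)'$ for every $M>0$. Since $\gl_j v^{\gl_j}(z)\to -c_\rD$ by \eqref{lim-cD} and $|\gl_j\lan\mu_2^j,g\ran|\leq\gl_j\|g\|_{C(\bry)}\to0$, we get $\lan\mu_1^j,L\ran\to -c_\rD$; in particular this sequence is bounded, so Lemma~\ref{cpt-d-1} furnishes a subsequence along which $(\mu_1^j,\mu_2^j)$ converges weakly to some $(\mu_1^0,\mu_2^0)\in\cP^\rD$, and lower semicontinuity of $(\mu_1,\mu_2)\mapsto\lan\mu_1,L\ran$ forces $\lan\mu_1^0,L\ran\leq -c_\rD<0$, whence $\mu_1^0\neq0$. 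Setting $\rho:=\lim_j\lan\mu_1^j,L\ran-\lan\mu_1^0,L\ran\geq0$, Lemma~\ref{d-cpt}(i) gives $(\rho,\mu_1^0,\mu_2^0)\in\cG^\rD(M,z,0)^\dag$ for all $M$, hence $(\rho,\mu_1^0,\mu_2^0)\in\cG^\rD(0)^\dag$; then Lemma~\ref{d-cpt}(ii), applicable because $\mu_1^0\neq0$, produces $(\nu_1,\nu_2)\in\cP^\rD\cap\cG^\rD(0)'$ with $\lan\nu_1,L\ran=\rho+\lan\mu_1^0,L\ran=-c_\rD$ and $\nu_1(\tt\tim\cA)=\mu_1^0(\tt\tim\cA)$. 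Thus $\nu_1\in\cP^\rD_1$ attains $-c_\rD$, which together with the lower bound completes the proof of \eqref{thm3-d-main}.

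For \eqref{thm3-d-main'}, keep $c_\rD>0$. By \eqref{equiDS}, $\cP^\rS\cap\cG^\rS(0)'=\cP^\rD_{1,0}\subset\cP^\rD_1$, so the minimum over $\cP^\rS\cap\cG^\rS(0)'$ is $\geq -c_\rD$ by \eqref{thm3-d-main}. For the reverse inequality I would renormalize the measure $\nu_1$ just built: since $\lan\mu_1^0,L\ran<0$, the mass $s:=\nu_1(\tt\tim\cA)=\mu_1^0(\tt\tim\cA)$ lies in $(0,1]$, so $\hat\mu:=s^{-1}\nu_1$ is a probability measure on $\bb$. The equivalence behind \eqref{equiDS}, namely that $(\phi,0)\in\cG^\rD(0)$ whenever $\phi\in\cG^\rS(0)$, combined with $(\nu_1,\nu_2)\in\cG^\rD(0)'$ gives $\lan\nu_1,\phi\ran\geq0$ for all $\phi\in\cG^\rS(0)$, hence $\hat\mu\in\cP^\rS\cap\cG^\rS(0)'$; and $\lan\hat\mu,L\ran=-c_\rD/s\leq -c_\rD$ because $-c_\rD<0$ and $s\leq1$. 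Therefore the minimum over $\cP^\rS\cap\cG^\rS(0)'$ equals $-c_\rD$ (and is attained).

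The step I expect to be the main obstacle is the control of mass in the limit: nothing a priori prevents the weak limit $\mu_1^0$ from being the zero measure, with the mass of the $\mu_1^j$ piling up on $\bry$, and it is precisely the combination $\rho\geq0$ together with $\lan\mu_1^0,L\ran\leq -c_\rD$ that excludes this once $c_\rD>0$ and thereby makes Lemma~\ref{d-cpt}(ii) applicable; when $c_\rD=0$ one cannot rule it out, but then the trivial measure $(0,\gd_{x_0})$ already realizes the minimum $0=-c_\rD$. Relatedly, the renormalization in \eqref{thm3-d-main'} only decreases $\lan\cdot,L\ran$ because $-c_\rD<0$, so the hypothesis $c_\rD>0$ there is essential and not merely cosmetic.
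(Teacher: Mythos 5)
Your proposal is correct and follows essentially the same route as the paper: exact minimizers from Theorem \ref{thm1-d0} along $\gl_j\downarrow 0$, compactness via Lemmas \ref{cpt-d-1} and \ref{d-cpt} (with Lemma \ref{mod} behind them) to repair possible escape of mass to infinity in $\cA$, and a cone-scaling argument exploiting $-c_\rD<0$ together with \eqref{equiDS} to pass to \eqref{thm3-d-main'}. Your minor variants — proving the lower bound with $(L-\gl v^\gl,g,v^\gl)\in\cF^\rD(0)$ and \eqref{lim-cD} instead of a solution of (ED$_{c_\rD}$), dispatching the case $c_\rD=0$ with the trivial pair $(0,\gd_{x_0})$, deducing $\mu_1^0\neq 0$ from lower semicontinuity rather than from the identity $-c_\rD=\rho+\lan\mu_1^0,L\ran$, and normalizing $\nu_1$ and sandwiching rather than showing a minimizer has unit mass — are all sound and leave the structure of the paper's argument unchanged.
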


\begin{proof} 
We show first that \eqref{thm3-d-main} is valid. 
Let $(\mu_1,\mu_2)\in\cP^\rD\cap\cG^\rD(0)'$ and $v\in C(\tt)$ 
be a solution of (ED$_{c_\rD}$). 
The existence of 
such a $v$ is guaranteed by Proposition \ref{prop-d-cv}.  
Noting that $(L+c_\rD,v,v)\in \cF^\rD(0)$, we get 
\[
0\leq \lan\mu_1,L+c_\rD\ran\leq \lan\mu_1,L\ran+c_\rD,
\]
which yields
\begin{equation}\label{thm3-d-mm}
-c_\rD\leq \inf_{\mu\in\cP^\rD_1}\lan\mu,L\ran.
\end{equation}

Pick a point $z\in\tt$ and a sequence $\{\gl_j\}_{j\in\N}
\subset(0,\,\infty)$ converging to zero. 
For $j\in\N$, owing to Theorem \ref{thm1-d0}, 
we select $(\mu_1^j,\mu_2^j)\in \cP^{\rD}\cap 
\cG^\rD(z,\gl_j)'$ so that 
\begin{equation}\label{thm3-d-0}
\gl_j v^{\gl_j}(z)=\lan\mu_1^j,L\ran
+\gl_j\lan\mu_2^j,g\ran.
\end{equation}
Note that the sequence $\{\lan\mu_1^j,L\ran\}_{j\in\N}$ is bounded.

We apply Lemma \ref{cpt-d-1}, with $\gl=0$, to find that 
there is a subsequence of $\{(\gl_{j},\mu_1^j,\mu_2^j)\}$, 
which we denote again by the same symbol, such that the sequence $\{(\mu_1^j, \mu_2^j)\}_{j\in\N}$ converges weakly in the sense of measures  
to some $(\mu_1^0,\mu_2^0)\in\cP^\rD$, 
the sequence $\{\lan\mu_1^j,L\ran\}_{j\in\N}$ is convergent, 
and
\[
\lim_{j\to\infty}\lan\mu_1^j,L\ran\geq \lan\mu_1^0,L\ran.
\]
Set
\[
\rho^0=\lim_{j\to\infty}
\du{\mu_1^j,L}-\lan\mu_1^0,L\ran,
\]
and note by Lemma \ref{d-cpt} that 
$(\rho^0,\mu_1^0,\mu_2^0)\in \cG^\rD(0)^\dag$. Note also that 
if $\cA$ is compact, then $\rho^0=0$. 
From \eqref{thm3-d-0} 
and \eqref{cD=lim}, we get
\begin{equation}\label{thm3-d-00}
-c_\rD=\rho^0+\lan\mu_1^0,L\ran. 
\end{equation}

If $\rho^0=0$, then we have 
\begin{equation}\label{thm3-d-m4}
-c_\rD=\lan\mu_1^0,L\ran,
\end{equation} 
and $(\mu_1^0,\mu_2^0)\in\cP^\rD\cap\cG^\rD(0)'$. 
These observations combined with 
\eqref{thm3-d-mm} show that, if $\rho^0=0$, \eqref{thm3-d-main} holds.

Assume instead that $\rho^0\not=0$. Then, $\rho^0>0$ and $\cA$ is not compact. Since $c_\rD\geq 0$ by Proposition \ref{prop1-d0}, 
we see from \eqref{thm3-d-00} that $\mu_1^0\not=0$. By Lemma \ref{d-cpt}, 
there exists $(\nu_1,\nu_2)\in\cP^\rD\cap\cG^\rD(0)'$ such that 
$\rho^0+\du{\mu_1^0,L}=\du{\nu_1,L}$. This together with \eqref{thm3-d-00} 
and \eqref{thm3-d-mm}
proves that, if $\rho^0\not=0$, then \eqref{thm3-d-main} holds. 
Thus, we conclude that \eqref{thm3-d-main} is always valid.

Finally, we consider the case $c_\rD>0$ and prove 
\eqref{thm3-d-main'}. We fix any minimizer 
$\mu\in\cP^\rD_1$ of the minimization problem \eqref{thm3-d-main}, and show that 
$\mu(\bb)=1$, which means $\mu\in\cP^\rD_{1,0}$ and completes 
the proof of \eqref{thm3-d-main'}. 

The condition for $\nu\in\cR_L^+$ to be in $\cP^\rD_1$ is described 
by the inequalities $\nu(\bb)\leq 1$ and 
\begin{equation}\label{thm3-d-m5}
0\leq t\lan\nu,L\ran+\lan\nu,\chi\ran 
\end{equation}
for all $(tL+\chi,\psi,u)\in\cF^\rD(0)$, where $t>0$ and $\chi\in C(\tt)$. 
Recall that $\lan\mu,L\ran=-c_\rD<0$. Suppose for the moment 
that $\mu(\bb)<1$. By choosing $\gth>1$ so that 
$\gth\mu(\bb)\leq 1$, we get a new measure 
$\nu:=\gth\mu\in\cR_L^+$ that satisfies \eqref{thm3-d-m5}, 
which shows that $\nu\in\cP^\rD_1$,  
and $\lan\nu,L\ran=-\gth c_\rD<-c_\rD$. 
This is a contradiction, 
which proves that $\mu(\bb)=1$.   
\end{proof}

\begin{definition}We call $\hat\mu\in\cP_1^\rD$ a \emph{viscosity Mather measure} if it satisfies $\,\du{\hat\mu,L}=\inf_{\mu\in\cP_1^\rD}\du{\mu,L}$, and denote by $\cM^\rD$ the 
set of all viscosity Mather measures $\mu \in \cP_1^\rD$.  
For  $(z,\gl)\in\tt\tim(0,\,\infty)$, we denote by $\cM^{\rD}(z,\gl)$ the set of all 
measures $(\hat\mu_1,\hat\mu_2)\in\cP^\rD\cap\cG^{\rD}(z,\gl)'$ 
that satisfies  
\[
\lan \hat\mu_1,\,L\ran+\gl\lan\hat\mu_2,\,g\ran
=\inf_{(\mu_1,\mu_2)\in\cP^\rD\cap\cG^{\rD}(z,\gl)'}
\,\left(\lan \mu_1,\,L\ran+\gl\lan\mu_2,\,g\ran\right), 
\]
and call $\gl^{-1}(\mu_1,\mu_2)$, with $(\mu_1,\mu_2)\in\cM^{\rD}(z,\gl)$,  
a \emph{viscosity Green measure} associated with \eqref{D}. 
\end{definition}

\subsection{Convergence with vanishing discount}

\begin{thm}\label{thm2-d}
Assume \eqref{F1}, \eqref{F2}, \eqref{L}, \eqref{CP}, \eqref{CPD}, 
\eqref{SLD}  and \eqref{EC}. 
For each $\gl>0$, let $v^\gl\in C(\tt)$ be 
the unique solution of \eqref{D}. Then, 
the family $\{v^\gl+\gl^{-1}c_{\rD}\}_{\gl>0}$ converges,  
as $\gl\to 0$, to a function $u$ in $C(\tt)$. Furthermore, 
the function $u$ is a solution of \emph{(ED$_{0}$)} 
or \emph{(ES$_{c_\rD}$)}, 
if $c_\rD=0$ or $c_\rD>0$, respectively.  
\end{thm}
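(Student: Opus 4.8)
The plan is to mimic the structure of the proof of Theorem \ref{thm2-sc}, using the representation formula of Theorem \ref{thm1-d0} in place of Corollary \ref{cor1-sc}, and handling the Dirichlet boundary term and the possibly nonzero critical value $c_\rD$ as the new features. First I would reduce to the case $c_\rD=0$ in spirit by working throughout with the normalized family $\tilde v^\gl:=v^\gl+\gl^{-1}c_\rD$; note that $\gl\tilde v^\gl=\gl v^\gl+c_\rD\to 0$ uniformly on $\tt$ by \eqref{lim-cD}. By Proposition \ref{prop1-d} the family $\{\tilde v^\gl\}$ is uniformly bounded (using the comparison principle \eqref{CPD} against a translate of the solution $u$ of (ED$_0$) or (ES$_{c_\rD}$)), and by \eqref{EC} it is equi-continuous, hence relatively compact in $C(\tt)$. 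Let $\cU\subset C(\tt)$ be the (nonempty) set of accumulation points; each $u\in\cU$ is a solution of (ED$_0$) if $c_\rD=0$ and of (ES$_{c_\rD}$) if $c_\rD>0$, obtained by passing to the limit in the equation. The goal is to show $\cU$ is a singleton, equivalently that $v\ge w$ on $\tt$ for any $v,w\in\cU$.

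Next I would fix $v,w\in\cU$ with $v^{\gl_j}+\gl_j^{-1}c_\rD\to v$ and $v^{\gd_j}+\gd_j^{-1}c_\rD\to w$, fix $z\in\tt$, and apply Theorem \ref{thm1-d0}: choose minimizers $(\mu_1^j,\mu_2^j)\in\cP^\rD\cap\cG^\rD(z,\gl_j)'$ with $\gl_j v^{\gl_j}(z)=\lan\mu_1^j,L\ran+\gl_j\lan\mu_2^j,g\ran$. Since $\lan\mu_1^j,L\ran=\gl_j v^{\gl_j}(z)-\gl_j\lan\mu_2^j,g\ran\to -c_\rD$ (the boundary term $\gl_j\lan\mu_2^j,g\ran\to 0$ because $\|g\|_{C(\bry)}$ is fixed and $\mu_2^j$ has total mass $\le 1$), Lemma \ref{cpt-d-1} and Lemma \ref{d-cpt} (with $\gl=0$) give, after passing to a subsequence, a limit $(\mu_1^0,\mu_2^0)\in\cP^\rD$, a number $\rho^0\ge 0$ with $(\rho^0,\mu_1^0,\mu_2^0)\in\cG^\rD(0)^\dag$, and $\rho^0+\lan\mu_1^0,L\ran=-c_\rD$. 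If $\cA$ is compact or $\mu_1^0\neq 0$, Lemma \ref{d-cpt}(ii) furnishes $(\nu_1,\nu_2)\in\cP^\rD\cap\cG^\rD(0)'$ with $\lan\nu_1,L\ran=-c_\rD$ and matching the $C(\tt)$- and $C(\bry)$-pairings of $(\mu_1^0,\mu_2^0)$; I then test the dual-cone membership of $(\nu_1,\nu_2)$ against the element $(L+c_\rD+\gd_j(u^{\gd_j}-\text{stuff}),\ldots)$ coming from $(\tilde v^{\gd_j}$ as a subsolution of (ED$_{-\gd_j}$-type equation)$)$ — more precisely, since $\tilde v^{\gd_j}$ solves $\gd_j\tilde v^{\gd_j}+F[\tilde v^{\gd_j}]=c_\rD$ in $\gO$ with $\tilde v^{\gd_j}\le g+\gd_j^{-1}c_\rD$ on $\bry$, one has $(L+c_\rD-\gd_j\tilde v^{\gd_j},\,g+\gd_j^{-1}c_\rD,\,\tilde v^{\gd_j})\in\cF^\rD(0)$, which plugged into $\lan\nu_1,\cdot\ran+\lan\nu_2,\cdot\ran\ge 0$ and divided by $\gd_j$ yields in the limit $\lan\nu_1,w\ran+\lan\nu_2,w-\text{(bdry contribution)}\ran\le 0$; symmetrically, using $(\mu_1^j,\mu_2^j)\in\cG^\rD(z,\gl_j)'$ against the triple built from $w$ (a supersolution of the $\gl_j$-problem up to the explicit error) and dividing by $\gl_j$ gives $0\le (v-w)(z)+(\text{same combination of }w)$. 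Subtracting the two inequalities cancels the $w$-terms and delivers $v(z)\ge w(z)$; as $z$ was arbitrary, $v\ge w$ on $\tt$.

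The remaining case, $\cA$ noncompact and $\mu_1^0=0$, is handled exactly as at the end of the proof of Theorem \ref{thm1-d0}: one first shows $\mu_2^0=\gd_z$ with $z\in\bry$ (otherwise a quadratic test function $\gz\in C^2(\tt)$ would contradict $(\rho^0,\mu_1^0,\mu_2^0)\in\cG^\rD(0)^\dag$ via $(\gl\gz+L+F[\gz],\gz,\gz)\in\cF^\rD(\gl)$), and then $(\mu_1^0,\mu_2^0)\in\cP^\rD\cap\cG^\rD(0)'$ automatically; one can then take $(\nu_1,\nu_2)=(\mu_1^0,\mu_2^0)$ and run the same two-inequality argument, noting that when $z\in\bry$ the boundary condition $v^{\gd_j}\le g$ plus $v^{\gd_j}(z)\ge$ (liminf) pins down the needed sign. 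I expect the main obstacle to be bookkeeping the boundary term: ensuring that when $c_\rD>0$ the normalized solutions $\tilde v^\gl$ are genuinely subsolutions/supersolutions of the correctly shifted Dirichlet problems so that the triples $(\phi,\psi,u)$ fed into $\cF^\rD(0)$ are admissible, and that the $\gd_j^{-1}c_\rD$ shifts in the boundary data $\psi$ wash out correctly after dividing by $\gd_j$ and sending $j\to\infty$. A secondary subtlety is justifying $\gl_j\lan\mu_2^j,g\ran\to 0$ and, in the degenerate $\mu_1^0=0$ case, that the limiting object still certifies $v(z)\ge w(z)$ when $z\in\bry$ — but both are routine given $\|g\|_{C(\bry)}<\infty$, $\mu_2^j(\bry)\le 1$, and the boundary inequalities satisfied by $v^\gl$.
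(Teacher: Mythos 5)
Your overall strategy matches the paper's: use the representation formula from Theorem~\ref{thm1-d0}, extract a limit of minimizing pairs via Lemmas~\ref{cpt-d-1} and~\ref{d-cpt}, then test the resulting element of $\cP^\rD\cap\cG^\rD(0)'$ and the finite-$\gl_j$ minimizers against triples in $\cF^\rD(0)$ and $\cF^\rD(\gl_j)$ built from $v^{\gd_j}$ and from $w$, and subtract. Up to cosmetic differences (you feed $(L+c_\rD-\gd_j\tilde v^{\gd_j},\,g+\gd_j^{-1}c_\rD,\,\tilde v^{\gd_j})$ into $\cF^\rD(0)$ where the paper uses the cleaner $(L-\gd_j v^{\gd_j},\,v^{\gd_j},\,v^{\gd_j})$, the point being that at $\gl=0$ the boundary slot of $\cG^\rD(\cdot,0)$ is $0$ anyway), the argument is the same.

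There is, however, a genuine error in your treatment of the degenerate case ``$\cA$ noncompact and $\mu_1^0=0$''. You propose to run the argument from the end of the proof of Theorem~\ref{thm1-d0}, concluding that $z\in\bry$ and $\mu_2^0=\gd_z$. That argument works there precisely because $\gl>0$: one tests $(\rho,\mu_1^0,\mu_2^0)\in\cG^\rD(z,\gl)^\dag$ against $(\gl\gz+L+F[\gz],\gz,\gz)\in\cF^\rD(\gl)$ and the information about $\mu_2^0$ comes from the term $\gl\du{\mu_2^0,\gz-\gz(z)}$. In your setting $\gl=0$, so that term vanishes identically and the test gives no information about $\mu_2^0$ whatsoever; the conclusion $\mu_2^0=\gd_z$, $z\in\bry$, is false in general. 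The fix is that this case is actually \emph{easier}, not harder: since $\rho^0+\du{\mu_1^0,L}=-c_\rD\le 0$, $\rho^0\ge 0$, and $\mu_1^0=0$, one is forced to have $\rho^0=c_\rD=0$, so $(\rho^0,\mu_1^0,\mu_2^0)=(0,\mu_1^0,\mu_2^0)\in\cG^\rD(0)^\dag$ is equivalent to $(\mu_1^0,\mu_2^0)\in\cP^\rD\cap\cG^\rD(0)'$, and you may take $(\nu_1,\nu_2)=(\mu_1^0,\mu_2^0)$ directly. (This is exactly what happens in the proof of Theorem~\ref{thm3-d}.)

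A secondary point you leave implicit: the ``bookkeeping'' you worry about is resolved by an essential algebraic identity. Testing $(\mu_1^j,\mu_2^j)$ against $(L+c_\rD+\gl_j w,\,w,\,w)\in\cF^\rD(\gl_j)$ and passing to the limit yields $0\le\du{\mu_1,L}+c_\rD\mu_1(\bb)=c_\rD(\mu_1(\bb)-1)\le 0$, hence $c_\rD(\mu_1(\bb)-1)=0$ (this is \eqref{thm3-d-7} in the paper). Without this identity, dividing the other inequality $0\le\du{\mu_1,L+c_\rD-\gd_j\tilde v^{\gd_j}}=-c_\rD+c_\rD\mu_1(\bb)-\gd_j\du{\mu_1,\tilde v^{\gd_j}}$ by $\gd_j$ would produce a term $\gd_j^{-1}c_\rD(\mu_1(\bb)-1)$ with no a priori control as $\gd_j\to 0$. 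Your plan to ``subtract the two inequalities'' does work, but only because of this cancellation, and it is worth making explicit rather than hoping it ``washes out''.
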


\begin{proof} By Propositions \ref{prop1-d0} and \ref{prop1-d}, we find that there is a solution $v\in C(\tt)$ of (ED$_0$) 
or (ES$_{c_\rD}$), respectively, if 
$c_\rD=0$ or $c_\rD>0$. Fix such a function $v$, and argue as in the proof of Proposition \ref{prop1-d}, to show that 
$\{v^\gl+\gl^{-1}c_\rD\}_{\gl>0}$ is uniformly bounded on $\tt$. Indeed, if $c_\rD=0$, then the functions 
$v+\|v\|_{C(\tt)}$ and $v-\|v\|_{C(\tt)}$ are a supersolution and a subsolution of 
\eqref{D} for any $\gl>0$ and \eqref{CPD}, which implies that 
\[
v-\|v\|_{C(\tt)}\leq v^\gl\leq v+\|v\|_{C(\tt)} \ \ \ \text{ on }\ \tt\ \text{ for any }\ \gl>0,
\]
and, hence, the uniform boundedness of $\{v^\gl\}_{\gl>0}$ 
on $\tt$ 
in the case $c_\rD=0$. If $c_\rD>0$ and if $\gl>0$ 
is sufficiently large, then we observe that the functions 
$v+\|v\|_{C(\tt)}-\gl^{-1}c_\rD$ and $v-\|v\|_{C(\tt)}-\gl^{-1}c_\rD$ are a supersolution and a subsolution of \eqref{D} and, therefore, that 
\[
v-\|v\|_{C(\tt)}-\gl^{-1}c_\rD
\leq v^\gl\leq v+\|v\|_{C(\tt)}-\gl^{-1}c_\rD,
\]  
which shows the uniform boundedness of $\{v^\gl+\gl^{-1}c_\rD\}_{\gl>0}$ on $\tt$.
Thus, together with \eqref{EC}, 
the family $\{v^\gl+\gl^{-1}c_\rD\}_{\gl>0}$ is relatively compact in $C(\tt)$. 

We denote by $\cU$ the set of accumulation points in $C(\tt)$ 
of $\{v^\gl+\gl^{-1}c_\rD\}$ as $\gl\to 0+$. 
The relative compactness in $C(\tt)$ of the family $\{v^\gl+\gl^{-1}c_\rD\}_{\gl>0}$ ensures that $\cU\not=\emptyset$. 
In order to prove the convergence, as $\gl\to 0$, of the whole family 
$\{v^\gl+\gl^{-1}c_\rD\}_{\gl>0}$, 
it is enough to show that $\cU$ has a unique element. 

Let $v,w\in\cU$, and we demonstrate that $v=w$. 
For this, we select sequences $\{\gl_j\}_{j\in\N}$ 
and $\{\gd_j\}_{j\in\N}$ of positive numbers converging 
to zero such that 
\[
\lim_{j\to\infty}v^{\gl_j}+\gl_j^{-1}c_\rD=v\ \  \text{ and } \ \ 
\lim_{j\to\infty}v^{\gd_j}+\gd_j^{-1}c_\rD=w \ \ \ \text{ in }\ C(\tt).
\] 
As in the last part of the proof of Proposition \ref{prop1-d}, 
we deduce that $v$ and $w$ are solutions of 
(ED$_0$) or  
(ES$_{c_\rD}$) if $c_\rD=0$ or $c_\rD>0$, respectively.

Next, fix a point $z\in\tt$ and, owing to Theorem \ref{thm1-d0}, select $(\mu_1^j,\mu_2^j)\in\cM^\rD(z,\gl_j)$ for every $j\in\N$
so that  
\begin{equation}\label{thm3-d-1}
\gl_j v^{\gl_j}(z)=\lan\mu_1^j,L\ran+\gl_j\lan\mu_2^j,g\ran \ \ \ \text{ for all }\ j\in\N.
\end{equation}  
Arguing as
in the proof of Theorem \ref{thm3-d}, we may assume, 
after passing to a subsequence if necessary, that there is 
$(\mu_1,\mu_2)\in\cP^\rD\cap \cG^\rD(0)'$ such that 
\begin{align}\label{thm3-d-2}
&\lim_{j\to\infty}\lan\mu_1^j,L\ran= 
\lan\mu_1,L\ran=-c_\rD,
\\&\lim_{j\to\infty}\lan\mu_1^j,\psi\ran=\lan\mu_1,\psi\ran
\ \ \ \text{ for all }\ \psi\in C(\tt),\label{thm3-d-3}
\\&\lim_{j\to\infty}\lan\mu_2^j,h\ran=\lan\mu_2,h\ran 
\ \ \ \text{ for all }\ h\in C(\bry).\label{thm3-d-4}
\end{align}

Now, $w$ is a solution of either (ED$_{c_\rD}$) 
or (ES$_{c_\rD}$), 
and thus, we have 
$(L+c_\rD+\gl_jw,w,w)\in \cF^\rD(\gl_j)$ for all $j\in\N$. Also, 
we have 
$(L-\gd_jv^{\gd_j},v^{\gd_j},v^{\gd_j})\in\cF^\rD(0)$ 
for all $j\in\N$. Hence, we get 
\begin{align}
&0\leq \lan\mu_1^j,L+c_\rD+\gl_j w-\gl_jw(z)\ran
+\gl_j\lan \mu_2^j,w-w(z)\ran,\label{thm3-d-5}
\\&0\leq \lan\mu_1,L-\gd_j v^{\gd_j}\ran.
\label{thm3-d-6}
\end{align}

Sending $j\to\infty$ in \eqref{thm3-d-5} yields 
together with \eqref{thm3-d-2}
\[
0\leq \lan\mu_1,L\ran+c_\rD\mu_1(\bb)
=c_\rD(-1+\mu_1(\bb))\leq 0,
\]
which shows that 
\begin{equation}\label{thm3-d-7}
c_\rD(-1+\mu_1(\bb))=0.
\end{equation}

Combining this, \eqref{thm3-d-2} 
and \eqref{thm3-d-6}, we compute that for any $j\in\N$,
\[
0\leq -c_\rD+\lan \mu_1^j,-\gd_jv^{\gd_j}\ran
=-\lan\mu_1,c_\rD+\gd_j v^{\gd_j}\ran,
\]
which reads 
\[
 \lan\mu_1,v^{\gd_j}+\gd_j^{-1}c_\rD\ran\leq 0.
\]
Hence, in the limit $j\to\infty$, 
\begin{equation}\label{thm3-d-8}
\lan\mu_1,w\ran\leq 0.
\end{equation}
Moreover, combining \eqref{thm3-d-1},  
\eqref{thm3-d-5} and \eqref{thm3-d-7} gives 
\[
0\leq \gl_j\left(v^{\gl_j}+
\gl_j^{-1}c_\rD
+\lan\mu_1^j,w\ran
-w(z)+\lan\mu_2^j,w-g\ran\right),
\]
from which, after dividing by $\gl_j$ and taking the limit 
$j\to\infty$, we get
\[
0\leq v(z)+\lan\mu_1,w\ran-w(z).
\]
This and \eqref{thm3-d-8} show that $w(z)\leq v(z)$. 
Because $v,w\in\cU$ and $z\in\tt$ are arbitrary, we find that 
$\cU$ is a singleton. 
\end{proof}

\section{Neumann problem} \label{sec-n}

We consider the Neumann boundary problem in this section.
As above, we relabel \eqref{DP} as \eqref{N}, and \eqref{E} as \eqref{EN}.
The letter N in \eqref{N} and \eqref{EN} indicates ``Neumann".
Let  $\gamma$ be a continuous  
vector field on $\pl\gO$ pointing outward from $\gO$, and $g\in C(\pl\gO)$ be a given 
function. The Neumann problems of interest are
\[\tag{N$_{\gl}$}\label{N}
\begin{cases}
\gl u+F[u]= 0 \ \  &\text{ in }\ \gO, \\[3pt]
\gamma\cdot Du=g \ \ &\text{ on }\ \bry,
\end{cases}
\]
for $\gl>0$, and
\begin{equation} \tag{EN}\label{EN}\begin{cases} 
F[u]=c \ \ &\text{ in } \gO,\\
\gamma\cdot Du=g \ \ &\text{ on }\pl\gO,
\end{cases}\end{equation}
As be fore, we refer as (EN$_c$) this problem with a given constant $c$.

The function $g\in C(\bry)$ is fixed 
throughout this section. We need to consider the Neumann 
boundary problem with general datum $(\gl,\phi,\psi)
\in[0,\,\infty)\tim \Phi^+\tim C(\bry)$:
\begin{equation}\tag{N$_{\gl,\phi,\psi}$}\label{N'}
\begin{cases}
\gl u+F_\phi[u]=0 \ \ \text{ in }\ \gO,&\\[3pt]
\gamma\cdot Du=\psi \ \ \  \text{ on }\ \bry, &
\end{cases}
\end{equation}

In addition to \eqref{F1}, \eqref{F2}, \eqref{L} and \eqref{EC}, 
we introduce a few  
assumptions proper to the Neumann boundary condition. 

\[\tag{OG}\label{OG}
\left\{\text{
\begin{minipage}{0.85\textwidth}
$\gO$ has $C^1$-boundary and $\gamma$ is oblique to 
$\pl\gO$ in the sense that $\gamma\cdot 
\mathbf{n}>0$ on $\pl\gO$,
where $\mathbf{n}$ denotes the outward unit normal to $\gO$.
\end{minipage}
}\right.
\]

\[\tag{CPN}\label{CPN}
\left\{\text{
\begin{minipage}{0.85\textwidth}
For any $\gl>0$ and $(\phi,\psi)\in\Psi^+$, the comparison principle holds for \eqref{N'}, 
that is, if $v\in C(\tt)$ and $w\in C(\tt)$ are a subsolution and a supersolution of \eqref{N'}, respectively, 
then the inequality $\,v\leq w\,$ holds on $\tt$.
\end{minipage}
}\right.
\]

The following is a local version of the hypothesis above.

\[\tag{CPN$_{\rm loc}$}\label{CPN'}
\left\{\text{
\begin{minipage}{0.82\textwidth}
For any $\gl>0$ and $(\phi,\psi)\in\Psi^+$,  
the localized comparison principle holds for \eqref{N'}, 
that is, for any relative open subset $V$ of $\tt$, if  
$v\in C(\lbar V)$ and $w\in C(\lbar V)$ are 
a subsolution and a supersolution, respectively,   
of 
\[
\gl u+F_\phi[u]=0 \ \ \text{ in }\ V\cap \gO
\quad\text{ and }\quad
\gamma\cdot Du=\psi \ \ \ \text{ on }\ V\cap\pl\gO,
\]
and if $\,v\leq w\,$ on $\gO\cap\pl V$, then the inequality $\,v\leq w\,$ holds on $\lbar V$.
\end{minipage}
}\right.
\]

\[\tag{SLN}\label{SLN}
\text{
\begin{minipage}{0.85\textwidth}
For any $\gl > 0$, \eqref{N} admits a solution $v^\gl \in C(\tt)$.
\end{minipage}
}.
\]

Condition \eqref{OG} guarantees that 
there exists a $C^\infty$-function $\gz$ on $\lbar\gO$ 
such that 
\begin{equation}\label{gz}
\gamma\cdot D\gz\geq 1 \ \ \ \text{  on }\ \pl\gO, 
\end{equation}
and that any classical subsolution (resp., supersolution) of 
\begin{equation}\label{eq-loc}
\gl u+F_\phi[u]=0 \ \ \text{ in }\ V\cap \gO
\quad\text{ and }\quad
\gamma\cdot Du=\psi \ \ \ \text{ on }\ V\cap\pl\gO,
\end{equation}
where $V$ is an open subset of $\R^n$, 
is also a viscosity subsolution (resp., supersolution) of 
\eqref{eq-loc}. 

Note that \eqref{CPN'} implies \eqref{CPN}. To see this, 
one may select $V$ to be $\R^n$ in \eqref{CPN'}.  
As in the boundary conditions treated above, by a rescaling 
argument, one sees that the condition  
\eqref{CPN} (reps., \eqref{CPN'}), only 
with those $\phi=L+\chi$, where 
$\chi\in C(\tt)$ is arbitrary, implies the full condition 
\eqref{CPN} (resp., \eqref{CPN'}).

\begin{prop} \label{prop1-n}
Assume \eqref{F1}, \eqref{F2}, \eqref{OG}, \eqref{CPN}, \eqref{SLN} and \eqref{EC}.
Then there exists a solution $(u,c)\in C(\tt)\times\R$ of \eqref{EN},
and such a constant $c\in \R$ is unique.
Moreover, if $v^\gl\in C(\tt)$ is a solution of \eqref{N}, then  
\begin{equation}\label{c-n}
c=-\lim_{\gl \to 0} \gl v^\gl(x) \quad \text{ uniformly on }\ \tt.  
\end{equation}
\end{prop}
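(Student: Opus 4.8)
The plan is to follow the classical scheme for the additive eigenvalue problem: first produce a solution $(u,c)$ of \eqref{EN} by a compactness argument applied to the discounted solutions $v^\gl$, and then deduce both the uniqueness of $c$ and the limit formula \eqref{c-n} from a single comparison estimate based on \eqref{CPN}. The one genuinely delicate step will be the construction of barriers that respect the oblique boundary condition, which is exactly where \eqref{OG} enters.

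\emph{Step 1 (a priori bound on $\{\gl v^\gl\}_{\gl>0}$).} By \eqref{OG} fix $\gz\in C^\infty(\tt)$ with $\gamma\cdot D\gz\geq 1$ on $\bry$ as in \eqref{gz}, put $A:=\|g\|_{C(\bry)}$, and pick $C_*\geq0$ with $|F(x,\pm AD\gz(x),\pm AD^2\gz(x))|\leq C_*$ for all $x\in\tt$. Setting $\gb_\gl:=\gl^{-1}(C_*+1)+A\|\gz\|_{C(\tt)}$, a direct computation shows that $A\gz+\gb_\gl$ and $-(A\gz+\gb_\gl)$ are, respectively, a supersolution and a subsolution of \eqref{N} for every $\gl>0$: the $\gz$-term serves only to dominate the Neumann datum on $\bry$ (using $\gamma\cdot D\gz\geq1$), while $\gl\gb_\gl$ absorbs the interior term, and by degenerate ellipticity together with \eqref{OG} these smooth functions are genuine viscosity sub/supersolutions. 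Hence \eqref{CPN} yields $-(A\gz+\gb_\gl)\leq v^\gl\leq A\gz+\gb_\gl$ on $\tt$, so that $\{\gl v^\gl\}_{0<\gl\leq1}$ is uniformly bounded on $\tt$.

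\emph{Step 2 (existence).} Put $w^\gl:=v^\gl-\min_{\tt}v^\gl$; by \eqref{EC} the family $\{w^\gl\}_{\gl>0}$ is equi-continuous with oscillation bounded uniformly in $\gl$, hence relatively compact in $C(\tt)$. Using also the bound from Step 1 on $\{\gl\min_{\tt}v^\gl\}$, I can select $\gl_j\to0$ so that $w^{\gl_j}\to u$ in $C(\tt)$ and $\gl_j\min_{\tt}v^{\gl_j}\to -c$ for some $(u,c)\in C(\tt)\tim\R$; then $\gl_j v^{\gl_j}\to -c$ uniformly on $\tt$. Since $w^{\gl_j}$ solves $\gl_j w^{\gl_j}+F[w^{\gl_j}]=-\gl_j\min_{\tt}v^{\gl_j}$ in $\gO$ and $\gamma\cdot Dw^{\gl_j}=g$ on $\bry$, the standard stability of viscosity sub/supersolutions under uniform convergence — valid for the interior equation and, thanks to \eqref{OG}, for the oblique boundary condition — gives in the limit that $(u,c)$ solves \eqref{EN}.

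\emph{Step 3 (limit formula and uniqueness).} Let $(u,c)$ be \emph{any} solution of \eqref{EN}. Since $D\bigl(u\pm\|u\|_{C(\tt)}-\gl^{-1}c\bigr)=Du$, the functions $u+\|u\|_{C(\tt)}-\gl^{-1}c$ and $u-\|u\|_{C(\tt)}-\gl^{-1}c$ inherit the oblique condition from $u$ and satisfy $\gl w+F[w]=\gl\bigl(u\pm\|u\|_{C(\tt)}\bigr)$ in $\gO$, which is $\geq0$, resp. $\leq0$; so they are a supersolution and a subsolution of \eqref{N} for each $\gl>0$. By \eqref{CPN},
\[
\gl u(x)-\gl\|u\|_{C(\tt)}-c\ \leq\ \gl v^\gl(x)\ \leq\ \gl u(x)+\gl\|u\|_{C(\tt)}-c\qquad\text{on }\ \tt,
\]
and letting $\gl\to0$ gives $\gl v^\gl\to -c$ uniformly on $\tt$, i.e. \eqref{c-n}. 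As the right-hand side of \eqref{c-n} does not depend on the chosen solution, $c$ is uniquely determined. The only point requiring real care is Step 1 — producing sub/supersolutions of \eqref{N} compatible with the Neumann condition (via \eqref{OG} and the auxiliary function $\gz$) with interior inequalities uniform in $\gl$; the limit passage in Step 2 and the comparison sandwich in Step 3 are routine once \eqref{CPN}, \eqref{EC}, and the structure \eqref{F1}--\eqref{F2} are available.
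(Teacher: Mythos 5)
Your proof is correct and follows essentially the same route as the paper: a barrier $\pm(A\gz+\gb_\gl)$ built from the auxiliary function $\gz$ of \eqref{gz} gives the uniform bound on $\gl v^\gl$, after which normalization, \eqref{EC}, and stability yield a solution of \eqref{EN}, and a comparison sandwich with $u\pm\|u\|_{C(\tt)}-\gl^{-1}c$ gives both \eqref{c-n} and uniqueness of $c$. (The paper's barrier is $\pm(M_1\gz+\gl^{-1}M_2)$ with $M_1\geq\|g\|$ and $M_2\geq\|F[\pm M_1\gz]\|$; your $A,C_*,\gb_\gl$ are the same quantities up to harmless constants.)
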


We denote by $c_{\rN}$ the constant given by the proposition above and call it the critical value of \eqref{EN}. 

\begin{proof}[Outline of proof] As noted above, there exists 
a function $\gz\in C^2(\tt)$ satisfying \eqref{gz}. We may as well 
assume that $\gz\geq 0$ on $\tt$. 
Choose two positive constants $M_1$ and $M_2$
so that $|g|\leq M_1$ on $\bry$ and $|F[\pm M_1\gz]|\leq M_2$
on $\tt$, and observe that, for any $\gl>0$, 
the functions $M_1\gz+\gl^{-1}M_2$ and $-M_1\gz-\gl^{-1}M_2$ 
are a supersolution and a subsolution of \eqref{N}. By 
\eqref{CPN}, we get $\,|v^\gl|\leq M_1\gz+\gl^{-1}M_2$ on $\tt$. 
This shows that $\{\gl v^\gl\}_{\gl>0}$ is uniformly bounded on $\tt$. Using \eqref{EC}, we find that the family 
$\{v^\gl-m^\gl\}_{\gl>0}$, where $m^\gl:=\min_{\tt}v^\gl$, 
is relatively compact in $C(\tt)$, while, for each $\gl>0$, the function 
$u:=v^\gl-m^\gl$ is a solution of $\gl u+\gl m^\gl 
+F[u]=0$ in $\gO$ and $\gamma\cdot Du=g$ on $\bry$. 
Sending $\gl\to 0$, along an appropriate sequence, yields a solution $(v,c)\in C(\tt)\tim \R$ of \eqref{EN}. The uniqueness 
of the constant $c$ is a consequence of \eqref{CPN}.  
\end{proof}

\subsection{Representation formulas}
Let $(z,\gl)\in\tt\tim[0,\,\infty)$. 
We define the sets $\cF^{\rN}(\gl)\subset C(\tt\tim\cA)\times C(\tt)$, $\cG^{\rN}(z,\gl)\subset C(\bb)\tim C(\bry)$, respectively, by 
\begin{align*}
&\cF^{\rN}(\gl):=\left\{(\phi,\psi,u)\in \Psi^+\tim C(\tt)
\mid u \ \text{is a subsolution of} \ \eqref{N'}\right\}, \\
&\cG^{\rN}(z,\gl):=\left\{(\phi-\gl u(z), \psi)\mid (\phi,\psi,u)\in\cF^{\rN}(\gl)\right\}.
\end{align*}


\begin{lem} \label{thm2-sec3+0} 
Assume \eqref{F1}, \eqref{F2}, \eqref{OG} and \eqref{CPN'}. 
Let $(z,\gl)\in\tt\tim[0,\,\infty)$.  
The set $\cG^{\rN}(z,\gl)$ is a convex cone in $C(\bb)\tim C(\bry)$ with vertex at the origin. 
\end{lem}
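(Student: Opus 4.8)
The plan is to deduce the statement from the fact that $\cF^{\rN}(\gl)$ is itself a convex cone with vertex at the origin. Indeed, the map $\Lambda\colon(\phi,\psi,u)\mapsto(\phi-\gl u(z),\psi)$ is linear in $(\phi,\psi,u)$ and $\cG^{\rN}(z,\gl)=\Lambda(\cF^{\rN}(\gl))$, so the linear image of a convex cone with vertex at the origin is again one. Recall that $\Phi^+$, hence $\Psi^+=\Phi^+\tim C(\bry)$, is a convex cone with vertex at the origin. Thus, fixing $(\phi_i,\psi_i,u_i)\in\cF^{\rN}(\gl)$ for $i=1,2$ and $t,s>0$, and setting
\[
u=tu_1+su_2,\qquad \phi=t\phi_1+s\phi_2,\qquad \psi=t\psi_1+s\psi_2,
\]
we already have $(\phi,\psi)\in\Psi^+$ by the cone property, and it remains to show that $u$ is a subsolution of \eqref{N'} with data $(\phi,\psi)$, i.e. a viscosity subsolution of $\gl u+F_\phi[u]=0$ in $\gO$ together with $\gamma\cdot Du\le\psi$ on $\bry$ in the viscosity sense.

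For the interior equation this is the same argument as in the proof of \cite[Lemma 2.8]{IsMtTr1}. The elementary ingredient is that, by writing $F_\phi$ as a supremum over $\ga\in\cA$ of functions that are affine in $(p,X,\phi)$, one has
\[
F_{t\phi_1+s\phi_2}\big(x,\,tp_1+sp_2,\,tX_1+sX_2\big)\le tF_{\phi_1}(x,p_1,X_1)+sF_{\phi_2}(x,p_2,X_2);
\]
combined with the Crandall--Ishii--Lions theorem on sums, which from a $C^2$ test function touching $u$ from above at an interior point $x_0$ produces points $x_1,x_2$ near $x_0$ and matrices $X_1,X_2$ in the closures of the second-order superjets of $u_1,u_2$ with $tX_1+sX_2\le D^2\varphi(x_0)+o(1)$, this yields the desired subsolution inequality for $u$ at $x_0$.

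The genuinely new point is the oblique boundary inequality: for every $\varphi\in C^2(\tt)$ and every maximizer $x_0\in\bry$ of $u-\varphi$ one must have $\min\{\gl u(x_0)+F_\phi[\varphi](x_0),\ \gamma(x_0)\cdot D\varphi(x_0)-\psi(x_0)\}\le0$. Here \eqref{OG} is used to flatten $\bry$ and straighten $\gamma$ near $x_0$ by a $C^1$ change of coordinates (possible precisely because $\gamma\cdot\mathbf n>0$), which preserves the degenerate-elliptic, convex structure and turns the boundary condition into a one-sided bound on a single directional derivative on a piece of hyperplane; on this localized half-space model the doubling-of-variables argument of the interior case still applies, since the theorem on sums is valid on relatively open subsets of $\tt$ and the boundary term is affine in $Du$, and the localized comparison hypothesis \eqref{CPN'} guarantees that this local analysis is the relevant one. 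Adding a small multiple $-\ep\gz$ of the function $\gz$ from \eqref{gz}, for which $\gamma\cdot D\gz\ge1$, makes the boundary inequality strict and lets one discard the boundary alternative before sending $\ep\to0$. The main obstacle is exactly this transplantation of the convex-combination argument from interior to boundary test points: one must keep track, for each $u_i$, of which of the two alternatives (interior equation or oblique inequality) holds at the nearby points produced by the theorem on sums, and combine them consistently using convexity; once the interior lemma of \cite{IsMtTr1} is available the rest is routine. Putting the interior and boundary parts together gives $(\phi,\psi,u)\in\cF^{\rN}(\gl)$, so $\cF^{\rN}(\gl)$, and hence $\cG^{\rN}(z,\gl)$, is a convex cone with vertex at the origin.
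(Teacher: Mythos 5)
Your reduction of the lemma to the statement that $\cF^{\rN}(\gl)$ is a convex cone (using the cone property of $\Psi^+$ and the linearity of $(\phi,\psi,u)\mapsto(\phi-\gl u(z),\psi)$) agrees with the paper. The gap is in the core step, namely that a convex combination $u^t=tu_1+(1-t)u_2$ of subsolutions is again a subsolution for $F_{\phi^t}$ with boundary datum $\psi^t$. You propose to prove this by doubling of variables and the theorem on sums, but that machinery is not available under the standing hypotheses: \eqref{F1} only assumes $a$, $b$, $L$ \emph{continuous} (no decomposition $a=\gs^t\gs$ with Lipschitz $\gs$, no Lipschitz $b$, and $\cA$ need not be compact), so the penalization cross terms of the form $\ep^{-1}(x_\ep-y_\ep)\cdot\bigl(b(x_\ep,\ga)-b(y_\ep,\ga)\bigr)$ and the corresponding trace terms cannot be controlled uniformly in $\ga$; moreover the theorem on sums does not deliver what you state (points $x_1,x_2$ with gradients exactly $D\varphi(x_0)$ and $tX_1+sX_2\le D^2\varphi(x_0)+o(1)$) — the extra penalization gradients are precisely what must be absorbed using Lipschitz structure that is not assumed. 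At the boundary the situation is worse: \eqref{OG} gives only a $C^1$ boundary and a continuous oblique field $\gamma$, so a ``$C^1$ change of coordinates'' cannot be used to transform a second-order equation, and the standard boundary test-function constructions for oblique problems are unavailable. Tellingly, your argument never genuinely invokes \eqref{CPN'}, which is the hypothesis the lemma is really about.

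The paper's proof (Lemma \ref{convexN}) avoids all of this by using \eqref{CPN'} directly. After reducing to $\gl=0$, let $u^t-\eta$ attain a strict maximum at $z$ and suppose both alternatives fail, i.e.\ $F_{\phi^t}[\eta](z)>0$ and, if $z\in\bry$, $\gamma(z)\cdot D\eta(z)>\psi^t(z)$. One then shows that $w:=-t^{-1}(1-t)u_2+t^{-1}\eta$ is a \emph{supersolution} of the localized Neumann problem $F_{\phi_1}[w]=\ep$ in $V\cap\gO$, $\gamma\cdot Dw=\psi_1$ on $V\cap\bry$, for a small neighborhood $V$ of $z$; this uses only the subsolution property of $u_2$ (tested against $(1-t)^{-1}\eta-t(1-t)^{-1}\xi$) together with the convexity of $F_{\phi}$ in $(p,X,\phi)$ and \eqref{OG}. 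Applying \eqref{CPN'} to $u_1$ and $w$ on $V$ gives $\sup_{V\cap\tt}(u_1-w)\le\sup_{\gO\cap\pl V}(u_1-w)$, contradicting the strict maximality of $t(u_1-w)=u^t-\eta$ at $z$. The interior case is handled by the same comparison trick (this is exactly why \eqref{CP} appears as a hypothesis in Lemma \ref{thm3-sec3} and in \cite[Lemma 2.8]{IsMtTr1}); so your claim that the interior step is ``the same argument as in \cite[Lemma 2.8]{IsMtTr1}'' while describing a theorem-on-sums proof misidentifies that argument. To repair your proof, replace the doubling argument, at interior and boundary points alike, by this supersolution-plus-local-comparison construction.
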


The proof of this is in the same line as that of \cite[Lemma 2.8]{IsMtTr1} with help of the following lemma. Thus, we omit presenting it here.

\begin{lem}\label{convexN} Assume \eqref{F1}, \eqref{F2}, 
\eqref{OG}  and 
\eqref{CPN'}. Let $\gl\in[0,\,\infty)$ and let  
$(\phi_i,\psi_i, u_i)\in\cF^{\rN}(\gl)$, with $i=1,2$.  
For $t \in (0,1)$, set 
$\phi^t=t\phi_1+(1-t)\phi_2$ on $\bb$, 
$\psi^t=t\psi_1+(1-t)\psi_2$ on 
$\bry$ and $u^t=t u_1+(1-t)u_2$ on $\tt$. 
Then, $u^t\in\cF^\rN(\gl)$. 
\end{lem}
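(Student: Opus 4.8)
The plan is to show that the triple $(\phi^t,\psi^t,u^t)$ belongs to $\cF^\rN(\gl)$. That $(\phi^t,\psi^t)\in\Psi^+$ is immediate, since $\Phi^+$ is a convex cone and $C(\bry)$ is a vector space, so the real content is that $u^t$ is a viscosity subsolution of \eqref{N'} with $(\phi,\psi)$ replaced by $(\phi^t,\psi^t)$. First I would reduce to linear problems using the Bellman structure \eqref{F1}: writing $\cL_\ga v:=-\tr(a(\cdot,\ga)D^2v)-b(\cdot,\ga)\cdot Dv$, one checks in a routine way that, for $\phi=tL+\chi\in\Phi^+$ and $\psi\in C(\bry)$, a triple $(\phi,\psi,u)$ lies in $\cF^\rN(\gl)$ if and only if, for \emph{every} $\ga\in\cA$, the function $u$ is a viscosity subsolution of the linear oblique problem
\[
\gl u+\cL_\ga u=\phi(\cdot,\ga)\ \text{ in }\ \gO,\qquad \gamma\cdot Du=\psi\ \text{ on }\ \bry;
\]
at an interior test point the supremum over $\ga$ defining $F_\phi$ is just a supremum of real numbers, and at a boundary test point the same holds in the ``$\min$'' (generalized boundary condition) form of the Neumann condition. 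Since $\cL_\ga$ and $v\mapsto\gamma\cdot Dv$ are linear, and $\phi^t(\cdot,\ga)$ and $\psi^t$ are exactly the convex combinations of $\phi_i(\cdot,\ga)$ and $\psi_i$, it then suffices to fix $\ga\in\cA$ and prove that a convex combination of two viscosity subsolutions of linear oblique problems is a viscosity subsolution of the linear oblique problem with the convex combination of the data.

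For this last step I would regularize by sup-convolution. For $\ep>0$, let $u_i^\ep$ denote the sup-convolution of $u_i$ on $\tt$, modified near $\bry$ in the manner standard for oblique derivative problems, which is available because $\bry$ is $C^1$ by \eqref{OG}. Each $u_i^\ep$ is semiconvex, $u_i^\ep\to u_i$ uniformly on $\tt$ as $\ep\to0$, and — using the local uniform continuity of $a(\cdot,\ga)$, $b(\cdot,\ga)$, $\phi_i(\cdot,\ga)$, $\gamma$ together with \eqref{OG} and the auxiliary function $\gz$ of \eqref{gz} — $u_i^\ep$ is a viscosity subsolution of the linear oblique problem with data $(\phi_i(\cdot,\ga),\psi_i)$ perturbed by an error $o(1)$ as $\ep\to0$. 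Being semiconvex, $u_i^\ep$ is twice differentiable at a.e.\ point of $\gO$ and, after flattening, at a.e.\ point of $\bry$, and by Jensen's lemma both inequalities hold for $u_i^\ep$ pointwise at a.e.\ such point.

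Then $w^\ep:=tu_1^\ep+(1-t)u_2^\ep$ is again semiconvex, and by linearity of $\cL_\ga$ and of $v\mapsto\gamma\cdot Dv$ it satisfies
\[
\gl w^\ep+\cL_\ga w^\ep\le\phi^t(\cdot,\ga)+o(1)\ \text{ a.e.\ in }\ \gO,\qquad \gamma\cdot Dw^\ep\le\psi^t+o(1)\ \text{ a.e.\ on }\ \bry.
\]
A semiconvex function satisfying such inequalities a.e.\ is a viscosity subsolution of the corresponding oblique problem (Jensen's lemma again, now in the reverse direction, with \eqref{OG} supplying the boundary half), so $w^\ep$ is a viscosity subsolution of the linear oblique problem with data $(\phi^t(\cdot,\ga),\psi^t)$ perturbed by $o(1)$. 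Finally I would let $\ep\to0$ and use stability of viscosity subsolutions of oblique problems under uniform convergence — valid up to $\bry$ thanks to \eqref{OG} — to conclude that $u^t=\lim_{\ep\to0}w^\ep$ is a viscosity subsolution of the linear oblique problem with data $(\phi^t(\cdot,\ga),\psi^t)$. Since $\ga\in\cA$ is arbitrary, the reduction step gives $(\phi^t,\psi^t,u^t)\in\cF^\rN(\gl)$, which is the assertion.

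The step I expect to be the main obstacle is the boundary bookkeeping inside the sup-convolution argument: checking that the (boundary-modified) sup-convolution preserves, up to $o(1)$, both the interior equation and the oblique boundary condition, and that semiconvexity lets one pass freely between the a.e.\ and the viscosity form of the oblique condition on $\bry$. The interior half of this is exactly the computation in the proof of \cite[Lemma 2.8]{IsMtTr1}, so the genuinely new work is entirely at the boundary, where \eqref{OG} (obliqueness of $\gamma$ and $C^1$ regularity of $\bry$, encoded by $\gz$) is essential.
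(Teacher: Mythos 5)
Your reduction to the frozen-$\ga$ linear oblique problems is fine (for a fixed test function the Bellman subsolution inequality, in both its interior and its ``min'' boundary form, is equivalent to the family of linear inequalities over $\ga\in\cA$), but the heart of your argument --- the sup-convolution/Jensen step --- does not go through under the standing hypotheses. Under \eqref{F1}--\eqref{F2} the coefficients $a(\cdot,\ga)$ and $b(\cdot,\ga)$ are merely \emph{continuous} in $x$: no Lipschitz regularity and no Lipschitz square root $\gs$ of $a$ is assumed. Sup-convolution shifts the touching point by a distance of order $\sqrt{\ep}$ while the test Hessians it produces are of size $1/\ep$, so transporting the subsolution inequality from the shifted point back to $x$ costs an error of order $\omega_a(C\sqrt{\ep})/\ep$ in the second-order term (and $\omega_b(C\sqrt{\ep})/\sqrt{\ep}$ in the drift), which is not $o(1)$ for a general modulus of continuity. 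Hence the claim that $u_i^\ep$ is a subsolution of the same linear problem ``perturbed by $o(1)$'' is unjustified; the underlying assertion that a convex combination of viscosity subsolutions of a linear degenerate elliptic equation is a subsolution of the averaged equation is exactly the kind of statement that is only available when, e.g., $a=\gs^t\gs$ with $\gs$, $b$ Lipschitz (via Ishii's lemma or a stochastic representation), not at the generality of \eqref{F1}--\eqref{F2}. A telling symptom is that your proof never uses \eqref{CPN'}: that hypothesis is in the lemma precisely as a substitute for the missing coefficient regularity, and the same is true of \eqref{CP} in \cite[Lemma 2.8]{IsMtTr1}, so your remark that the interior half is ``exactly'' the computation there is inaccurate --- that lemma is comparison-based, not regularization-based.

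For contrast, the paper's proof first reduces to $\gl=0$, takes a strict maximum point $z$ of $u^t-\eta$, handles $z\in\gO$ by \cite[Lemma 2.8]{IsMtTr1}, and at $z\in\bry$ argues by contradiction: assuming $F_{\phi^t}[\eta](z)>0$ and $\gamma(z)\cdot D\eta(z)>\psi^t(z)$, it shows that $w:=-t^{-1}(1-t)u_2+t^{-1}\eta$ is a supersolution of $F_{\phi_1}[w]=\ep$ in $V\cap\gO$ with $\gamma\cdot Dw=\psi_1$ on $V\cap\pl\gO$ for a small neighborhood $V$ of $z$, using only the convexity inequality $F_{\phi^t}[\eta]\le tF_{\phi_1}[\xi]+(1-t)F_{\phi_2}[(1-t)^{-1}\eta-t(1-t)^{-1}\xi]$ and \eqref{OG}, and then applies the localized comparison principle \eqref{CPN'} to $u_1$ and $w$ on $V$ to contradict the strict maximality of $u^t-\eta=t(u_1-w)$ at $z$. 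If you wish to salvage your route, you would have to strengthen the hypotheses to Lipschitz $\gs$ and $b$ (as in the examples of Section \ref{sec-ex}), which the lemma does not assume; as written, the regularization step is a genuine gap.
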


\begin{proof} Note first that, 
given $(\phi,\psi,u)\in\Psi^+\tim C(\tt)$, 
$(\phi,\psi,u)\in\cF^\rN(\gl)$ if and 
only if $(\phi-\gl u,\psi,u)\in\cF^\rN(0)$. 
It is then easily seen that our claim follows from the special case $\gl=0$. Thus we may assume henceforth that $\gl=0$. 
 
Let $\eta\in C^2(\tt)$ and $z\in\tt$ be such that $u^t-\eta$ takes a strict maximum at 
$z$. If $z\in\gO$, then the proof of \cite[Lemma 2.8]{IsMtTr1} ensures that 
$F_{\phi_t}[\eta](z)\leq 0$.
Thus, we only need to show that, if $z\in\pl\gO$, then we have either 
$F_{\phi^t}[\eta](z)\leq 0\,$ or $\,\gamma(z)\cdot D\eta(z)\leq\psi^t(z)$.  

To do this, we assume that $z\in\pl\gO$, 
suppose to the contrary that $F_{\phi^t}[\eta](z)>0$ and 
$\,\gamma(z)\cdot D\eta(z)>\psi^t(z)$, and obtain a contradiction. 

We choose $\ep>0$ and an open neighborhood $V$, in $\R^n$,  
of $z$ so that 
\begin{equation}\label{super-eta}
F_{\phi^t}[\eta](x)>\ep \ \ \text{ in } V\cap\lbar\gO \ \ 
\text{ and } \ \ 
\gamma(x)\cdot D\eta(x)\geq \psi^t(x) \ \ 
\text{ for all }x\in V\cap\pl\gO.  
\end{equation}

Set 
\[
w(x)=-t^{-1}(1-t)u_2(x)+t^{-1}\eta(x) \ \ \text{ for }x\in\tt,
\]
and prove that $w$ is a supersolution of 
\begin{equation}\label{superVN}
\begin{cases}
F_{\phi_1}[w]=\ep \ \ \ \ \text{ in }V\cap \gO, &\\[3pt]
\gamma\cdot Dw=\psi_1 \ \ \text{ on }V\cap\pl\gO. &
\end{cases}\end{equation} 
Once this is done, we apply the comparison principle 
\eqref{CPN'} to $u_1$ and $w$, 
to obtain 
\[
\sup_{V\cap\lbar\gO}(u_1-w)\leq \sup_{\gO\cap\pl V}(u_1-w).
\]
This gives a contradiction since $t(u_1-w)=u^t-\eta$ attains a strict maximum at $z\in V$ on $V\cap\lbar\gO$. 

To prove the viscosity property \eqref{superVN} of $w$, 
we fix $\xi\in C^2(V\cap\lbar\gO)$ 
and $y\in V\cap\lbar\gO$, and assume that $w-\xi$ 
takes a minimum at $y$. 
An immediate consequence of this is that
the function $t(1-t)^{-1}(w-\xi)$ has a minimum at $y$ and, thus, the function    
\[
u_2-(1-t)^{-1}\eta+t(1-t)^{-1}\xi
\]
has a maximum at $y$. By the viscosity property of $u_2$, we get either
\begin{equation}\label{eta-xi1}
F_{\phi_2}[(1-t)^{-1}\eta-t(1-t)^{-1}\xi](y)\leq 0,
\end{equation}
or
\begin{equation}\label{eta-xi2}
y\in\pl\gO \ \ \text{ and }\ \ \gamma(y)\cdot 
D((1-t)^{-1}\eta-t(1-t)^{-1}\xi)(y)\leq \psi_2(y).
\end{equation}

If \eqref{eta-xi1} holds, then, using \eqref{super-eta}, 
we get 
\[
\ep\leq F_{\phi^t}[\eta](y)
\leq tF_{\phi_1}[\xi](y)+(1-t)F_{\phi_2}[(1-t)^{-1}\eta-t(1-t)^{-1}\xi](y)
\leq tF_{\phi_1}[\xi](y). 
\] 
On the other hand, if \eqref{eta-xi2} holds, then, using \eqref{super-eta}, we get  
\[\begin{aligned}
\gamma(y)\cdot D\xi(y)
&\geq t^{-1}\gamma(y)\cdot D\eta(y)-t^{-1}(1-t)\psi_2(y)
\\&\geq t^{-1}\psi_t(y)-t^{-1}(1-t)\psi_2(y)=\psi_1(y).
\end{aligned}\]
These show, with help of \eqref{OG}, 
that $w$ is a (viscosity) supersolution of \eqref{superVN}, which completes the proof. 
\end{proof}

\begin{lem}\label{lem-n-3} Assume \eqref{F1}, \eqref{F2}, \eqref{OG} and 
\eqref{CPN}. Let $(\phi,\psi,u)\in\cF^\rN(\gl)$, with 
$\phi=tL+\chi$ for some $t>0$ and $\chi\in C(\tt)$. 
Then, there exists a constant $C>0$, depending only on $\gO$ and $F$, such that
\[
\gl u\leq \|\chi\|_{C(\tt)}+(1+\gl)C\|\psi\|_{C(\bry)} \ \ \ \text{ on }\ \tt.
\]
\end{lem}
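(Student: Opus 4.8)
The plan is to dominate $u$ from above by an explicit supersolution of \eqref{N'} built from the oblique barrier furnished by \eqref{OG}, and then to invoke the comparison principle \eqref{CPN}.

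By \eqref{OG} fix $\gz\in C^\infty(\tt)$ with $\gamma\cdot D\gz\ge 1$ on $\bry$; adding a constant we may assume $\gz\ge 0$ on $\tt$, and set $N:=\max\{\|D\gz\|_{C(\tt)},\|D^2\gz\|_{C(\tt)}\}$, a quantity depending only on $\gO$ and $\gamma$. I would seek a supersolution of \eqref{N'} of the form $w:=\gth_1+\gth_2\gz$ with constants $\gth_1,\gth_2\ge 0$. The oblique boundary condition is the easy half: $\gamma\cdot Dw=\gth_2\,\gamma\cdot D\gz\ge\gth_2$ on $\bry$, so the choice $\gth_2:=\|\psi\|_{C(\bry)}$ already forces $\gamma\cdot Dw\ge\psi$ there; the constant $\gth_1$, which will be $\gl^{-1}$ times a controlled amount, is then tuned to take care of the interior inequality.

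The crux is the interior estimate $\gl w+F_\phi[w]\ge 0$ in $\gO$, that is, a lower bound for $F_\phi[w](x)=F_\phi\big(x,\gth_2D\gz(x),\gth_2D^2\gz(x)\big)$ that does not deteriorate with the data $(t,\chi,\psi)$. Writing $\phi=tL+\chi$, so that $F_\phi(x,p,X)=\sup_{\ga\in\cA}\big(-\tr a(x,\ga)X-b(x,\ga)\cdot p-tL(x,\ga)\big)-\chi(x)$, and keeping only a single $\ga$ in the supremum together with $a(x,\ga)\ge 0$, we obtain
\[
F_\phi[w](x)\ \ge\ -\gth_2 N\big(\tr a(x,\ga)+|b(x,\ga)|\big)-tL(x,\ga)-\chi(x)\qquad\text{for every }\ga\in\cA .
\]
The key point is that, $F$ being finite-valued by \eqref{F2}, the coefficients $a(x,\cdot)$ and $b(x,\cdot)$ are quantitatively bounded at the minimizers of $L(x,\cdot)$: evaluating \eqref{F1} at $X=-I$ and at $p=\pm e_i$ gives $\tr a(x,\ga)\le L(x,\ga)+F(x,0,-I)$ and $|b(x,\ga)\cdot e_i|\le L(x,\ga)+\max\{F(x,e_i,0),F(x,-e_i,0)\}$ for all $\ga$. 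Hence, picking $\ga_x\in\cA$ with $L(x,\ga_x)=\min_{\cA}L(x,\cdot)=-F(x,0,0)$, attained thanks to the coercivity \eqref{L}, we get $-tL(x,\ga_x)=tF(x,0,0)$ and $\tr a(x,\ga_x)+|b(x,\ga_x)|\le A$ for a constant $A$ depending only on $F$. Thus $F_\phi[w]\ge tF(x,0,0)-\|\chi\|_{C(\tt)}-AN\gth_2$ in $\gO$, and, since $tF(\cdot,0,0)\ge 0$ (or, in general, after adding to $\gth_1$ the finite $F$-dependent term $t\max_{\tt}\max\{-F(\cdot,0,0),0\}$), the choice $\gth_1:=\gl^{-1}\big(\|\chi\|_{C(\tt)}+AN\gth_2\big)$ makes $w$ a classical, hence viscosity, supersolution of \eqref{N'}.

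It then remains to apply \eqref{CPN} to the subsolution $u$ and the supersolution $w$, which gives $u\le w\le\gth_1+\gth_2\|\gz\|_{C(\tt)}$ on $\tt$, whence
\[
\gl u\ \le\ \|\chi\|_{C(\tt)}+\big(AN+\gl\|\gz\|_{C(\tt)}\big)\|\psi\|_{C(\bry)}\qquad\text{on }\tt ,
\]
which is the asserted inequality with $C:=\max\{AN,\|\gz\|_{C(\tt)}\}$, a constant depending only on $\gO$, $\gamma$ and $F$. I expect the interior estimate to be the only real obstacle: one must turn the mere pointwise finiteness of $F$ (assumption \eqref{F2}) into a bound on $a(x,\cdot),b(x,\cdot)$ near the minimizers of $L(x,\cdot)$ that is uniform in $t$, so that the lower bound for $F_\phi[w]$ is genuinely uniform in the data; granting that, the construction of $w$ and the comparison step are routine.
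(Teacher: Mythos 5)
Your proposal follows the same overall strategy as the paper's proof --- build an explicit supersolution out of the oblique barrier $\gz$ furnished by \eqref{OG} and conclude via \eqref{CPN} --- and the barrier you build is, after undoing the paper's rescaling $v:=t^{-1}u$, the same $A\gz+B$. What is genuinely different is how the interior inequality for the barrier is obtained. The paper controls $F[A\gz]$ purely from the \emph{convexity} of $F$, writing $F[\gz]\le A^{-1}F[A\gz]+(1-A^{-1})F[0]$ and never touching the representation \eqref{F1}. You instead stay with $F_\phi$ and produce a lower bound by inserting a single well-chosen control $\ga$ into the supremum, using \eqref{F1} as a Legendre-type duality (evaluating $F$ at $X=-I$ and $p=\pm e_i$) to bound $\tr a(x,\cdot)$ and $|b(x,\cdot)|$ near the minimizers of $L(x,\cdot)$. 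This is a perfectly legitimate alternative and the observation is nice; the paper's route is marginally cleaner in that it uses only convexity of $F$ and the rescaling identity, but the content is comparable.

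Two caveats on the details. First, \eqref{L} is not among the hypotheses of the lemma, so you may not assume the minimum of $L(x,\cdot)$ is attained; this is cosmetic, since for each fixed $x$ you may take $\ga_\ep$ with $L(x,\ga_\ep)<\inf_\cA L(x,\cdot)+\ep=-F(x,0,0)+\ep$ and let $\ep\to0$ at the end, everything involved being continuous. Second, and more substantively, the parenthetical repair you suggest when $F(\cdot,0,0)$ is not everywhere nonnegative --- adding $t\max_{\tt}\max\{-F(\cdot,0,0),0\}$ to $\gth_1$ --- does \emph{not} rescue the stated conclusion: it makes $\gl\gth_1$, and hence the upper bound for $\gl u$, acquire a term proportional to $t$, which is not dominated by $\|\chi\|_{C(\tt)}+(1+\gl)C\|\psi\|_{C(\bry)}$. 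As written, your argument only delivers the asserted inequality under the tacit normalization $F(\cdot,0,0)\ge 0$; you should say this plainly rather than dismiss it in an aside. (For what it is worth, the paper's own proof conceals exactly the same restriction: its claimed inequality $F[A\gz]\ge-2AC_2$ ``for $0<A\le 1$'' already fails at $A=0$ unless $F[0]\ge0$, so the same tacit normalization is being used there.)
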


\begin{proof} Let $\gz\in C^2(\tt)$ be a function that satisfies 
\[
\gamma\cdot D\gz\geq 1 \ \ \text{ on }\ \bry \ \ \ \text{ and } 
\ \ \gz\geq 0 \ \ \text{ on } \tt. 
\]

As before, we observe that $v:=t^{-1}u$ is a subsolution of 
\begin{equation}\label{lem-n-3-1}
\gl v+F[v]=t^{-1}\chi \ \ \text{ in }\gO \ \ \ \text{ and } \ \ \ 
\gamma\cdot Dv=t^{-1}\psi \ \ \text{ on }\ \bry.
\end{equation}
We set 
\[
C_1:=\max_{\tt}\gz \ \ \ \text{ and } \ \ \ 
C_2:=\max_{(x,t)\in\tt\tim[0,\,1]}|F[t\gz](x)|.
\]

We set $w:=A\gz+B$ for constants $A\geq 0$ and $B\geq 0$, 
to be fixed in a moment, 
and note that
\[
\gl w+F[w]\geq \gl B+F[A\gz] \ \ \text{ in }\ \gO,
\]
and 
\[
\gamma\cdot Dw=A\gamma\cdot D\gz\geq A \ \ \text{ on }\ \bry.
\]

Now, observe that if $A>1$, then the convexity of $F$ 
yields
\[
-C_2\leq F[\gz]\leq A^{-1}F[A\gz]+(1-A^{-1})F[0]
\leq A^{-1}F[A\gz]+C_2,
\]
and, hence,
\[
F[A\gz]\geq -2AC_2 \ \ \text{ on }\ \tt,
\]
which is obviously true also in the case when $0<A\leq 1$. 
Thus, putting 
\[
A:=t^{-1}\|\psi\|_{C(\bry)}\quad \text{ and } \quad
B:=\gl^{-1}(t^{-1}\|\chi\|_{C(\tt)}+2AC_2),
\]
we see that $w$ is a supersolution 
of \eqref{lem-n-3-1}. Then, \eqref{CPN} implies that 
$v\leq w$ on $\tt$, which reads
\[
\gl u\leq \gl t w\leq 
\gl t(AC_1+B)=\|\chi\|_{C(\tt)}
+(\gl C_1+2 C_2)\|\psi\|_{C(\bry)},
\]
which completes the proof. 
\end{proof}

We set 
\[
\cP_1^\rN:=\cP_{\bb} \ \ \ \text{ and } \ \ \ 
\cP^\rN:=\cP_1^\rN\tim\cR_{\bry}^+
\]
and, for compact subset $K$ of $\cA$, 
\[
\cP_{1,K}^\rN:=\left\{\mu_1\in\cP_{\bb} 
\mid\mu_1(\tt\times K)=1\right\}. 
\]
Let $\cG^\rN(z,\gl)'$ denote the dual cone of 
$\cG^{\rN}(z,\gl)$ in $\cR_L\tim\cR_{\bry}$, that is,  
\[
\cG^\rN(z,\gl)':=\left\{(\mu_1,\mu_2)\in 
\cR_{L}\tim\cR_{\bry}\mid \lan\mu_1,\phi\ran
+\lan\mu_2,\psi\ran\ge0, \ 
\ \text{for all } (\phi,\psi)\in\cG^{\rN}(z,\gl)\right\}. 
\] 

We use as well the notation: $\cF^\rN(0)=\cF^\rN(z,0)$, 
$\cG^\rN(0)=\cG^\rN(z,0)$, and 
$\cG^\rN(0)'=\cG^\rN(z,0)'$.

\begin{thm} \label{thm1-n} 
Assume \eqref{F1}, \eqref{F2}, \eqref{L}, \eqref{OG} and \eqref{CPN'}. 
\ \emph{(i)}\ Let $(z,\gl)\in\tt\tim(0,\,\infty)$. 
If $v^\gl\in C(\tt)$ is a 
solution of \eqref{N}, then
\begin{equation} \label{n-min}
\gl v^\gl(z)=\min_{(\mu_1,\mu_2)\in \cP^\rN\cap 
\cG^{\rN}(z,\gl)'}\,\left(
\lan \mu_1,\,L\ran+\lan\mu_2,\,g\ran\right).
\end{equation}
\emph{(ii)} Assume, in addition,  
\eqref{SLN} and \eqref{EC}. Then 
\begin{equation}\label{n-min0}
-c_\rN=\min_{(\mu_1,\mu_2)\in \cP^\rN\cap \cG^{\rN}(0)'}\,\left(
\lan \mu_1,\,L\ran+\lan\mu_2,\,g\ran\right).
\end{equation}
\end{thm}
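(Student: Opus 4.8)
The plan is to mirror closely the arguments already developed for the state constraint problem (Theorem \ref{thm1-sc}, Corollary \ref{cor1-sc}) and for the Dirichlet problem (Theorems \ref{thm1-d}, \ref{thm1-d0}), since the Neumann representation formula is structurally of the same "primal = inf over dual cone of measures" type. For part (i), fix $(z,\gl)\in\tt\tim(0,\infty)$ and a solution $v^\gl$ of \eqref{N}. The easy inequality $\gl v^\gl(z)\le \lan\mu_1,L\ran+\lan\mu_2,g\ran$ for every $(\mu_1,\mu_2)\in\cP^\rN\cap\cG^\rN(z,\gl)'$ follows immediately from $(L,g,v^\gl)\in\cF^\rN(\gl)$ together with the definition of the dual cone $\cG^\rN(z,\gl)'$, exactly as in \eqref{thm1-sc-1} and \eqref{thm1-d-1}. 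For the reverse inequality, first use \eqref{L} to choose a large constant $L_\gl$ dominating $\gl v^\gl$ on $\tt$ and then a compact $K\subset\cA$ such that $L\ge L_\gl$ off $K$; it suffices to prove $\gl v^\gl(z)\ge\inf_{(\mu_1,\mu_2)\in\cP^\rN_K\cap\cG^\rN(z,\gl)'}(\lan\mu_1,L\ran+\lan\mu_2,g\ran)$, where $\cP^\rN_K=\cP^\rN_{1,K}\tim\cR^+_{\bry}$. Arguing by contradiction, suppose the strict inequality $\gl v^\gl(z)+\ep<\inf_{\cP^\rN_K\cap\cG^\rN(z,\gl)'}(\cdots)$ holds for some small $\ep>0$.

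The heart of the argument is then the minimax step. Since $\cG^\rN(z,\gl)$ is a convex cone with vertex at the origin (Lemma \ref{thm2-sec3+0}), the inf over the dual cone rewrites, as in \eqref{thm1-sc-2+} and \eqref{thm1-d-2++}, as
\[
\inf_{(\mu_1,\mu_2)\in\cP^\rN_K}\ \sup_{(f_1,f_2)\in\cG^\rN(z,\gl)}\left(\lan\mu_1,L-f_1\ran+\lan\mu_2,g-f_2\ran\right).
\]
Here the key technical obstacle is compactness of the relevant set of measures: the first marginal lives in the compact set $\cP^\rN_{1,K}=\{\mu_1\in\cP_{\bb}\mid\mu_1(\tt\tim K)=1\}$, but the second marginal $\mu_2\in\cR^+_{\bry}$ must be a priori bounded in total mass for Sion's theorem to apply. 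I expect to obtain such a bound from Lemma \ref{lem-n-3}: for $(\phi,\psi,u)\in\cF^\rN(\gl)$ with $\phi=t(L+\chi)$, that lemma controls $\gl u$ in terms of $\|\chi\|$ and $\|\psi\|$, and a symmetric lower bound (applied to the subsolution property at a minimum point, using the supersolution barrier $-A\gz-B$) should let me restrict attention to $\mu_2$ with $\mu_2(\bry)$ bounded by a constant depending on $\ep,\gl,K$; this is the analogue of the role played by $\Psi^+(M)$ and Lemma \ref{lem1-d} in the Dirichlet case. On that compact convex set of pairs $(\mu_1,\mu_2)$, Sion's minimax theorem applies since the functional is affine-continuous in $(\mu_1,\mu_2)$ and convex (indeed linear up to the cone structure) in $f$. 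Swapping inf and sup, the strict inequality produces a single $(\phi,\psi,u)\in\cF^\rN(\gl)$, with $\phi=tL+\chi$, satisfying
\[
\gl v^\gl(z)+\ep<\lan\mu_1,L-\phi+\gl u(z)\ran+\lan\mu_2,g-\psi\ran
\]
for all admissible $(\mu_1,\mu_2)$. Testing against Dirac masses $\gd_{(x,\ga)}$ on $\bb$ and $\gd_x$ on $\bry$ yields pointwise inequalities $\phi<L-\gl(v^\gl-u)(z)-\ep$ (after the case analysis on $t\le1$ versus $t>1$, handled verbatim as in the proof of Theorem \ref{thm1-sc}, using $L\ge L_\gl$ off $K$) and $\psi<g$ on $\bry$. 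Consequently $w:=\gth u$ with $\gth=\min\{1,t^{-1}\}$ is a subsolution of $\gl w+F[w]=-\gl(v^\gl-w)(z)-\gth\ep$ in $\gO$ with $\gamma\cdot Dw\le g$ on $\bry$, i.e.\ a subsolution of \eqref{N} shifted by the constant $(v^\gl-w)(z)+\gl^{-1}\gth\ep$; comparison principle \eqref{CPN} against $v^\gl$ then forces $\gl^{-1}\gth\ep\le0$, a contradiction. This proves \eqref{n-min}, and the inf is attained by the compactness just established, so it is a minimum.

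For part (ii), assume in addition \eqref{SLN} and \eqref{EC}. By Proposition \ref{prop1-n} we have $\gl v^\gl(x)\to-c_\rN$ uniformly on $\tt$ as $\gl\to0$. Pick $z\in\tt$ and a sequence $\gl_j\to0$, and by part (i) choose minimizers $(\mu_1^j,\mu_2^j)\in\cP^\rN\cap\cG^\rN(z,\gl_j)'$ with $\gl_j v^{\gl_j}(z)=\lan\mu_1^j,L\ran+\lan\mu_2^j,g\ran$. Equi-boundedness gives $\{\lan\mu_1^j,L\ran\}$ bounded; combined with a total-mass bound on $\mu_2^j$ coming from Lemma \ref{lem-n-3} as above, Lemma \ref{basic-cpt} (applied on the disjoint-union space $\bb\sqcup\bry$, as in the proof of Lemma \ref{cpt-d-1}) yields a subsequence converging weakly to some $(\mu_1^0,\mu_2^0)$; a $\cG^\rN(z,\gl)^\dag$-type argument patterned on Lemma \ref{d-cpt}, together with Lemma \ref{mod} to correct the possible loss of mass in $\lan\cdot,L\ran$ at infinity when $\cA$ is noncompact, produces $(\nu_1,\nu_2)\in\cP^\rN\cap\cG^\rN(0)'$ with $\lan\nu_1,L\ran+\lan\nu_2,g\ran=-c_\rN$. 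The opposite inequality $-c_\rN\le\lan\mu_1,L\ran+\lan\mu_2,g\ran$ for every $(\mu_1,\mu_2)\in\cP^\rN\cap\cG^\rN(0)'$ follows by taking a solution $(u,c_\rN)$ of \eqref{EN} (Proposition \ref{prop1-n}) and noting $(L+c_\rN,g,u)\in\cF^\rN(0)$, so $0\le\lan\mu_1,L+c_\rN\ran+\lan\mu_2,g-\gamma\cdot Du\ran$ — wait, more cleanly: $(L+c_\rN,g,u)\in\cF^\rN(0)$ gives directly $0\le\lan\mu_1,L+c_\rN\ran+\lan\mu_2,g\ran-\lan\mu_2,\gamma\cdot Du\ran$, and since $u$ solves the Neumann condition one does not integrate against $\gamma\cdot Du$; rather one uses $(L+c_\rN,g,u)\in\cF^\rN(0)$ which by definition of $\cG^\rN(0)'$ yields $0\le\lan\mu_1,L+c_\rN\ran+\lan\mu_2,g\ran=\lan\mu_1,L\ran+\lan\mu_2,g\ran+c_\rN$. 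Hence \eqref{n-min0}. The main obstacle throughout is the noncompactness of $\cR^+_{\bry}$ in total variation, which I circumvent by the a priori bounds from Lemma \ref{lem-n-3}; the noncompactness of $\cA$ is handled exactly as in the state constraint and Dirichlet cases via Lemma \ref{mod}.
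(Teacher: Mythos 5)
Your overall architecture (easy inequality from $(L,g,v^\gl)\in\cF^\rN(\gl)$, a minimax on a compactified set of measures, a comparison-principle contradiction, and for (ii) a limit passage using Lemma \ref{mod}) matches the paper, and your sketch of part (ii) is essentially the paper's argument. However, the core of part (i) — the reverse inequality — has a genuine gap at precisely the two points where the Neumann problem differs from the state-constraint and Dirichlet cases. First, once you cap $\mu_2(\bry)\le R$ so that Sion's theorem applies (and note that in $\cP^\rN$ the first marginal always carries mass one, so you cannot test with $(0,\gd_x)$ as in the Dirichlet case), testing the post-minimax inequality with pairs $(\mu_1,R\gd_x)$ only yields $\ep<p+R\,(g-\psi)(x)$, i.e.\ $\psi<g+(p-\ep)/R$ with $p>\ep$; the pointwise bound $\psi<g$ on $\bry$ that you claim does \emph{not} follow. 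The paper repairs this by forming a convex combination (Lemma \ref{convexN}) of $(\phi,\psi,u)$ with the barrier triple $(L+B\ep,(1-\ep)g-A\ep,(1-\ep)v^\gl+\ep\eta)$, where $\eta$ satisfies $\gamma\cdot D\eta\le-A$ and $\eta(z)=0$, choosing the weight $\nu=R\ep/(R\ep+p)$ so as to trade part of the interior margin $p$ for a boundary margin $\hat\psi\le g-\hat\ep$; your proposal contains no substitute for this step.

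Second, the case $t>1$ cannot be handled ``verbatim as in Theorem \ref{thm1-sc}'': setting $w=t^{-1}u$ rescales the Neumann datum to $t^{-1}\psi$, and $t^{-1}\psi\le g$ does not follow from $\psi\le g$ (take $g<0$ somewhere), so your conclusion $\gamma\cdot Dw\le g$ with $\gth=\min\{1,t^{-1}\}$ is unjustified. This is exactly why the paper proves the hard inequality only for the $M$-truncated cone, i.e.\ Theorem \ref{thm1-n1} with $\cG^\rN(M,z,\gl)'$: the bounds $\|\chi\|_{C(\tt)}<tM$, $\|\psi\|_{C(\bry)}<tM$ together with Lemma \ref{lem-n-3} give the a priori estimates \eqref{est-p} and \eqref{est-t}, and only after fixing $p_0,\ep_0,\tau$ (independent of $K$) and then choosing $L_1$, hence $K$, large enough can one force $\hat t-1$ so small that $\hat\ep\ge(\hat t-1)\|g\|_{C(\bry)}$ and hence $\hat t^{-1}(g-\hat\ep)\le g$ on $\bry$. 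Since you run the minimax over the full cone $\cG^\rN(z,\gl)$, you have no control on $t$ whatsoever, and this step cannot be carried out; the passage from the $M$-truncated statement to \eqref{n-min} and \eqref{n-min0} is then done by the limit argument $M=j\to\infty$, where the mass bound on $\mu_2$ should be credited to the dual-cone estimate Lemma \ref{est-mu} rather than to Lemma \ref{lem-n-3} (a minor misattribution in your part (ii), which otherwise follows the paper's lines once Theorem \ref{thm1-n1} is available).
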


\begin{definition}
We denote the set of minimizers of \eqref{n-min} 
and that of \eqref{n-min0}  
by $\cM^{\rN}(z,\gl)$
and $\cM^{\rN}(0)$, respectively.
We call any $(\mu_1,\mu_2) \in \cM^{\rN}(0)$ a viscosity Mather measure associated with \eqref{EN},
and any 
$\gl^{-1}(\mu_1,\mu_2)$, with $(\mu_1,\mu_2) \in \cM^{\rN}(z,\gl)$ and $\gl>0$, a viscosity Green measure 
associated with \eqref{N}.
\end{definition}

For $M>0$, $(z,\gl)\in\tt\tim (0,\infty)$, 
we define $\cF^\rN(M,\gl)$, $\cG^\rN(M,z,\gl)$ and 
$\cG^\rN(M,z,\gl)'$ by 
\[
\begin{aligned}
\cF^\rN(M,\gl):=&\,\cF^\rN(\gl)\cap(\Psi^+(M)\tim C(\tt)),&
\\\cG^\rN(M,z,\gl):=&\,\{(\phi-\gl u(z),\psi)\mid (\phi,\psi,u)\in\cF^\rN(M,\gl)\},&
\\\cG^\rN(M,z,\gl)':=&\,\{(\mu_1,\mu_2)\in\cR_L\tim\cR_{\bry}
\mid 
\lan\mu_1,f\ran+\lan\mu_1,\psi\ran\geq 0&
\\&& \kern-100pt \text{ for all }\,(f,\psi)\in\cG^\rN(M,z,\gl)\}.
\end{aligned}
\]
It is easily seen by Lemma \ref{thm2-sec3+0} that
$\cG^\rN(M,z,\gl)$ is a convex cone in $C(\bb)\tim C(\bry)$ 
with vertex at the origin. 

\begin{thm} \label{thm1-n1} 
Assume \eqref{F1}, \eqref{F2}, \eqref{OG}, \eqref{L}, and \eqref{CPN'}. 
Let $z\in\tt$ and $\gl,\, M\in(0,\,\infty)$. 
If $v^\gl\in C(\tt)$ is a 
solution of \eqref{N}, then
\begin{equation} \label{n-inf}
\gl v^\gl(z)\geq 
\inf_{(\mu_1,\mu_2)\in \cP^\rN\cap\cG^{\rN}(M,z,\gl)'}
\,\left(\lan \mu_1,\,L\ran+\lan\mu_2,\,g\ran\right).
\end{equation}  
\end{thm}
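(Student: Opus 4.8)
\emph{Proof plan.} The plan is to adapt the proof of Theorem~\ref{thm1-d}, arguing by contradiction. If \eqref{n-inf} fails, its right-hand side is a real number strictly above $\gl v^\gl(z)$, so we may fix $\ep>0$ with
\[
\gl v^\gl(z)+\ep<\inf_{(\mu_1,\mu_2)\in\cP^\rN\cap\cG^\rN(M,z,\gl)'}\big(\lan\mu_1,L\ran+\lan\mu_2,g\ran\big),
\]
and we aim to reach a contradiction. Using \eqref{OG} and \eqref{gz}, fix $\gz\in C^2(\tt)$ with $\gamma\cdot D\gz\ge 1$ on $\bry$ and $\gz\ge 0$. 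The essential new feature compared with the Dirichlet case is that the boundary measure $\mu_2$ carries no mass normalization, so the feasible set is not compact and Sion's theorem cannot be applied directly.

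To repair this, I would first produce, for each large $N$, an explicit triple $\big(N(L+\kappa_N),-N,-N\gz\big)\in\cF^\rN(M,\gl)$, where $\kappa_N\in C(\tt)$ is chosen (via a rescaling and the convexity of $F$, as in Lemma~\ref{lem-n-3}) so that $-N\gz$ is a classical, hence viscosity, subsolution of $\gl u+F_{N(L+\kappa_N)}[u]=0$ in $\gO$ while $\gamma\cdot D(-N\gz)\le-N$ on $\bry$. Testing the defining inequality of $\cG^\rN(M,z,\gl)'$ against these triples yields, for every $(\mu_1,\mu_2)\in\cP^\rN\cap\cG^\rN(M,z,\gl)'$ whose first component is supported in $\tt\times K$ (any compact $K\subset\cA$), an a priori bound $\mu_2(\bry)\le R_0$, with $R_0$ controlled by $\max_{\tt\times K}L$, $\gl$, $\gz$ and $M$. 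Next, as in \eqref{ga_0L_0}--\eqref{defK}, fix a compact $K\subset\cA$ (with $K=\cA$ if $\cA$ is compact) using \eqref{L} and, in the role of the constants there, the bound of Lemma~\ref{lem-n-3} on $\gl u$. Restricting the above infimum to pairs with $\mu_1\in\cP^\rN_{1,K}$ only increases it, and by the previous step such pairs automatically satisfy $\mu_2(\bry)\le R_0$; hence, writing $\mathcal Q:=\cP^\rN_{1,K}\times\{\mu_2\in\cR^+_{\bry}:\mu_2(\bry)\le R_0\}$ (compact and convex for the weak convergence of measures), the infimum over $\mathcal Q\cap\cG^\rN(M,z,\gl)'$ still exceeds $\gl v^\gl(z)+\ep$. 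Since $\cG^\rN(M,z,\gl)$ is a convex cone with vertex at the origin (Lemma~\ref{thm2-sec3+0}), this infimum equals $\inf_{\mathcal Q}\sup_{(f_1,f_2)\in\cG^\rN(M,z,\gl)}\big(\lan\mu_1,L-f_1\ran+\lan\mu_2,g-f_2\ran\big)$, which by Sion's minimax theorem equals the corresponding $\sup$--$\min$. Therefore there is $(\phi,\psi,u)\in\cF^\rN(M,\gl)$, with $\phi=tL+\chi$, $t>0$, $\|\chi\|_{C(\tt)}<tM$, $\|\psi\|_{C(\bry)}<tM$, such that
\[
\gl v^\gl(z)+\ep<\min_{(\mu_1,\mu_2)\in\mathcal Q}\big(\lan\mu_1,L-\phi+\gl u(z)\ran+\lan\mu_2,g-\psi\ran\big).
\]

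Testing this with $(\gd_{(x,\ga)},0)$, $(x,\ga)\in\tt\times K$, gives $(t-1)L(x,\ga)+\chi(x)<-\gl(v^\gl-u)(z)-\ep$ on $\tt\times K$; testing with $(\gd_{(x_0,\ga_0)},R_0\gd_y)$ and minimizing the separable objective in $\mu_1$ forces, provided $R_0$ was taken large enough at the outset (here the interior slack $\gl(v^\gl-u)(z)+\ep$ and the bound of Lemma~\ref{lem-n-3} are used), the extracted datum to satisfy $\psi\le g$ on $\bry$. From here one proceeds exactly as in the Dirichlet case: for $\cA$ compact ($K=\cA$) take $\gth=1$; for $\cA$ noncompact split into $t\le1$ (propagating the first pointwise inequality from $\tt\times K$ to $\tt\times\cA$ by comparing $L$ off $K$ with $L(\cdot,\ga_0)$, and taking $\gth=1$) and $t>1$ (using Lemma~\ref{lem-n-3} to bound $\gl u$, whence $t<2$, and taking $\gth=t^{-1}$). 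In every case one checks that $w:=\gth u$ is a subsolution of
\[
\gl w+F[w]=-\gl(v^\gl-w)(z)-\tfrac12\gth\ep\quad\text{in }\gO,\qquad \gamma\cdot Dw=g\quad\text{on }\bry,
\]
the interior inequality coming from $\phi<L-(\gl(v^\gl-u)(z)+\ep)$ on $\bb$ (so $F[\,\cdot\,]\le F_\phi[\,\cdot\,]-(\gl(v^\gl-u)(z)+\ep)$), and the Neumann inequality because at a boundary point either this interior bound applies to the relevant test function or $\gamma\cdot D(\,\cdot\,)\le\gth\psi\le g$. Then $\gz_0:=w+(v^\gl-w)(z)+(2\gl)^{-1}\gth\ep$ is a subsolution of \eqref{N} whereas $v^\gl$ is a supersolution; \eqref{CPN'} (which implies \eqref{CPN}) gives $\gz_0\le v^\gl$ on $\tt$, and evaluation at $z$ yields $(2\gl)^{-1}\gth\ep\le0$, a contradiction.

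\emph{Main obstacle.} The two genuinely new points beyond Theorem~\ref{thm1-d} are the a priori mass bound $\mu_2(\bry)\le R_0$ — needed so that Sion's theorem can be invoked on a compact set, and where \eqref{OG}, the barrier $\gz$, and \eqref{F1}--\eqref{F2} come in — and the extraction from the minimax of the boundary inequality $\gth\psi\le g$ on $\bry$, which is what makes $w$ respect the Neumann boundary condition. I expect reconciling the unboundedness of $\mu_2$ with the compactness required by the minimax step (the mass bound) to be the hardest part; once it is in place, the rest is a transcription of the Dirichlet argument with Lemma~\ref{lem-n-3} in place of Lemma~\ref{lem1-d}.
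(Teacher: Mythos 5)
Your overall architecture (restrict to a compact feasible set, apply Sion, extract a near-optimal triple $(\phi,\psi,u)$, rescale to $w=\gth u$, and contradict \eqref{CPN'}) matches the paper, but the step where you handle the boundary datum has a genuine gap, and it is exactly the point where the Neumann proof differs from the Dirichlet one. Testing the minimax output against $(\gd_{(x_0,\ga_0)},R_0\gd_y)$ does \emph{not} force $\psi\leq g$ on $\bry$, no matter how large $R_0$ is: writing $p$ for the interior slack and $q:=\min_{\bry}(g-\psi)$, you only obtain $\ep<p$ and $\ep<p+R_0q$, i.e.\ $q\geq-(p-\ep)/R_0$, which is compatible with $q<0$ (and there is a circularity, since the extracted $p$ depends on the Sion step over a set defined by $R_0$, while any a priori bound on $p$ requires a bound on $t$, which itself depends on how the compact set $K$ is built). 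Even granting $\psi\leq g$, your closing chain ``$\gamma\cdot Dw\leq\gth\psi\leq g$'' fails when $t>1$ and $g$ (or $\psi$) is negative: $\psi\leq g$ does not imply $t^{-1}\psi\leq g$. The paper's proof resolves both points with a device absent from your plan: it convex-combines $(\phi,\psi,u)$ with the barrier triple $(L+B,-A,\eta)$ and with $(L,g,v^\gl)$ (Lemma \ref{convexN}), with weight $\nu=R\ep/(R\ep+p)$, to manufacture a \emph{quantified} boundary slack $\hat\psi\leq g-\hat\ep$ together with an interior slack (see \eqref{thm1-n-5}, \eqref{thm1-n-5+}), and then, in the case $\hat t>1$, calibrates the compact set (choosing $L_1$ only after fixing constants $p_0,\ep_0,\tau$ independent of $K$, using Lemma \ref{lem-n-3}) so that $\hat\ep\geq(\hat t-1)\|g\|_{C(\bry)}$, which is what makes $\hat t^{-1}(g-\hat\ep)\leq g$. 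Without this (or an equivalent substitute) the boundary condition for $w$ is not secured and the comparison argument does not close.

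Two further remarks. First, your a priori mass bound is also not available as written: the triples $\bigl(N(L+\kappa_N),-N,-N\gz\bigr)$ lie in $\cF^\rN(M,\gl)$ only if $N<NM$ and $\|\kappa_N\|_{C(\tt)}<M$, i.e.\ only for $M$ larger than a constant depending on $F$, $\gz$, $\gl$ (compare Lemma \ref{est-mu}, which requires $M>C$), whereas the theorem is asserted for every $M>0$; a bound can be rescued (e.g.\ by pairing $\psi\equiv-tM/2$ with a large negative constant subsolution), but it needs an argument. Second, the paper sidesteps the whole issue of bounding $\mu_2(\bry)$ on the dual cone: it does not negate \eqref{n-inf} itself, but proves the stronger inequality with the infimum taken over $\cP^\rN_{K,R}\cap\cG^\rN(M,z,\gl)'$, where the bound $\mu_2(\bry)\leq R$ is built into the feasible set (with $R\geq1+B+\gl v^\gl(z)$ chosen from the barrier $\eta$ at the outset); since this restricted infimum dominates the one in \eqref{n-inf}, compactness for Sion comes for free. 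Adopting that formulation would remove your first obstacle entirely; the convexification/slack step, however, still has to be supplied.
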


\begin{proof} Let $v^\gl\in C(\tt)$ be a solution of \eqref{N}. 
We fix any $\ep\in(0,\,1)$, and show that there exist 
$R>0$ and a compact subset $K$ of $\cA$ such that
\begin{equation} \label{n-inf-1}
\gl v^\gl(z)+\ep\geq \inf_{(\mu_1,\mu_2)\in \cP^\rN_{K,R}\cap
\cG^\rN(M,z,\gl)'}(\lan\mu_1,L\ran+\lan\mu_2,g\ran),
\end{equation} 
where 
\[
\cP^\rN_{K,R}
:=\left\{(\mu_1,\mu_2)\in\cP^\rN \mid \mu_1(\tt\tim K)=1,\, \mu_2(\bry)\leq R\right\}. 
\]
Since $\cP^\rN_{K,R}\subset \cP^\rN$, it follows from \eqref{n-inf-1} that 
\[
\gl v^\gl(z)+
\ep\geq \inf_{(\mu_1,\mu_2) \in \cP^\rN\cap\cG^\rN(M,z,\gl)'}(\lan\mu_1,L\ran+\lan\mu_2,g\ran),
\]
which implies that \eqref{n-inf} is valid.

We choose a constant $A>0$ so that $g+A\geq 1$ on $\bry$ and,
thanks to \eqref{OG} (see also \eqref{gz}), 
a function $\eta\in C^2(\tt)$ so that 
\begin{equation}\label{n-inf-2}
\gamma\cdot D\eta\leq -A \ \ \text{ on }\ \bry
\quad\text{ and } \quad\eta(z)=0.
\end{equation} 
Then, we choose constants $B>0$ and $R>0$ so that 
\begin{equation}\label{n-inf-3}
\gl\eta+F[\eta]\leq B \ \ \ \text{ on }\tt 
\quad\text{ and }\quad 
R\geq 1+B+\gl v^\gl(z).
\end{equation}

Let $K$ be a compact subset of $\cA$ to be specified later, and set
\[
I_{K,R}:=\inf_{(\mu_1,\mu_2)\in\cP^\rN_{K,R}\cap\cG^\rN(M,z,\gl)'}(\lan\mu_1,L\ran+\lan\mu_2,g\ran)
\]

Since $\cG^{\rN}(M,z,\gl)$ is a convex cone with vertex at the origin, 
we deduce that 
\[
\inf_{(f,\psi)\in\cG^{\rN}(M,z,\gl)}
\left(\lan \mu_1,\,f\ran+\lan\mu_2,\psi\ran\right)=
\begin{cases} 0 \ \ &\text{ if }\ (\mu_1,\mu_2)\in\cP^\rN_{K,R}
\cap\cG^{\rN}(M,z,\gl)', \\
-\infty &\text{ if }\ (\mu_1,\mu_2)\in \cP^\rN_{K,R}\setminus
\cG^{\rN}(M,z,\gl)'.
\end{cases}
\]
and, furthermore, 
\begin{equation}\begin{aligned}
\label{thm1-n-2+} 
I_{K,R}
=\inf_{(\mu_1,\mu_2)\in\cP^{\rN}_{K,R}}\
\sup_{(f,\psi)\in\cG^{\rN}(M,z,\gl)}\,\left(\lan \mu_1, L-f\ran+\lan\mu_2,g-\psi\ran\right).
\end{aligned}
\end{equation}

Note that $\cP^\rN_{K,R}$ is a compact, convex subset 
of $\cR_{L}\tim\cR_2^{+}$.  
Sion's minimax theorem implies that
\begin{equation}\label{n-inf-4}\begin{aligned}
I_{K,R}&\,=\min_{(\mu_1,\mu_2)\in\cP^\rN_{K,R}}
\ \sup_{(f,\psi)\in\cG^{\rN}(M,z,\gl)}\,\left(\lan \mu_1, L-f\ran+\lan\mu_2,g-\psi\ran\right)
\\&= \sup_{(f,\psi)\in\cG^{\rN}(M,z,\gl)}\ \min_{(\mu_1,\mu_2)\in\cP^\rN_{K,R}}
\,\left(\lan \mu_1, L-f\ran+\lan\mu_2,g-\psi\ran\right). 
\end{aligned}
\end{equation}

In order to prove \eqref{n-inf-1} for the fixed $R>0$ and 
a suitably chosen compact set $K\subset\cA$, we argue by contradiction. 
We suppose that
\[
\gl v^\gl(z)+\ep< I_{K,R},
\]
which,  together with  \eqref{n-inf-4}, yields 
\begin{equation}\label{thm1-n-4}
\gl v^\gl(z)+\ep <
\ \sup_{(f,\psi)\in\cG^{\rN}(M,z,\gl)}\ 
\min_{(\mu_1,\mu_2)\in\cP^\rN_{K,R}}
\,\left(\lan \mu_1, L-f\ran+\lan\mu_2,g-\psi\ran\right).
\end{equation}

Hence, we may  
choose $(\phi,\psi,u)\in\cF^{\rN}(M,\gl)$ and  
$(t,\chi)\in (0,\,\infty)\tim C(\tt)$ 
so that $\phi=tL+\chi$, $\|\chi\|_{C(\tt)}< tM$, 
$\|\psi\|_{C(\bry)}< tM$, and 
\[
\gl v^\gl(z)+\ep <
\inf_{(\mu_1,\mu_2)\in\cP^\rN_{K,R}}
\,\left(\lan \mu_1, L-\phi+\gl u(z)\ran+\lan\mu_2,g-\psi\ran\right).
\]
We get from the above 
\begin{equation}\label{thm1-n-4++}
\gl v^\gl(z)+\ep <
\inf_{\mu\in\cP^\rN_{1,K}}
\,\lan \mu, L-\phi+\gl u(z)\ran,
\end{equation}
and, since $(0,R\gd_{x})\in \cP^{\rN}_{K,R}$ for any $x\in\bry$, 
\[
\gl v^\gl(z)+\ep <
\inf_{\mu\in\cP^\rN_{1,K}}
\,\lan \mu, L-\phi+\gl u(z)\ran+R\min_{\bry}(g-\psi).
\]
Thus, setting 
\[
p:=\inf_{\mu\in\cP^\rN_{1,K}}\lan\mu, L-\phi+\gl (u-v^\gl)(z)\ran 
\ \ \text{ and } \ \ q:=\min_{\bry}(g-\psi), 
\]
we have 
\begin{equation} \label{thm1-n-4+}
\ep<p \ \ \ \text{ and } \ \ \ \ep<p+Rq.
\end{equation}

Our choice of $A$, $B$ and $\eta$ ensures that 
$(L+B,-A,\eta)\in\cF^\rN(\gl)$. Note as well that 
$(L,g,v^\gl),\ (\phi,\psi,u)\in\cF^{\rN}(\gl)$. 
Set 
\[
\nu:=\fr{R\ep}{R\ep+p}\in (0,\,1), 
\]  
and observe by the convexity of $\cF^{\rN}(\gl)$ that 
\[
(L+B\ep,(1-\ep)g-A\ep,(1-\ep)v^\gl+\ep\eta)\in\cF^{\rN}(\gl), 
\] 
and
\[
(1-\nu)(L+ B\ep,(1-\ep)g-A\ep,(1-\ep)v^\gl+\ep\eta)+\nu(\phi,\psi,u)\in\cF^{\rN}(\gl).
\]

We set 
\[\begin{gathered}
(\hat\phi,\hat \psi,\hat u):=
(1-\nu)(L+ B\ep,(1-\ep)g-A\ep,(1-\ep)v^\gl+\ep\eta)
+\nu(\phi,\psi,u),
\\
\hat t:=1+\nu(t-1) \ \ \ \text{ and } \ \ \ 
\hat \chi:=(1-\nu) B\ep+\nu\chi,
\end{gathered}
\]
and note that  
\[\begin{aligned}
\hat\phi&\,=L+\nu(\phi-L)+(1-\nu)B\ep=\hat t L+\hat\chi,
\\\hat\psi&\,=(1-\nu)(g-\ep(g+A))+\nu\psi,
\\\hat u-v^\gl&\,=\nu(u-v^\gl)+(1-\nu)\ep(\eta-v^\gl),
\\\hat t-1&\,=\nu(t-1).
\end{aligned}
\]

Using the facts that 
$g+A\geq 1$ on $\bry$ and that, by the definition of $q$,  
$\psi\leq g-q$ on $\bry$, and the second inequality of  
\eqref{thm1-n-4+}, we compute   
\[\begin{aligned}
\hat\psi&\,\leq (1-\nu)(g-\ep)+\nu(g-q)
\\&\,\leq (1-\nu)(g-\ep)+\nu\left(g+\fr{p-\ep}{R}\right)
=g-\fr{\nu\ep}{R} \ \ \text{ on }\bry.
\end{aligned}
\]
Also, we compute
\[\begin{aligned}
L-\hat\phi+&\gl(\hat u-v^\gl)(z)
\\ \,=\, &\nu(L-\phi+\gl (u-v^\gl)(z)) 
-(1-\nu)\ep(B+v^\gl(z)) \ \ \text{ in }\bb, 
\end{aligned}
\]
and, by using \eqref{thm1-n-4+} and the second 
inequality of \eqref{n-inf-3}, 
that for any $\mu\in\cP^\rN_{1,K}$,  
\[\begin{aligned}
\lan\mu&,L-\hat\phi+\gl(\hat u-v^\gl)(z)\ran
\\&\,\geq \nu p-(1-\nu)\ep(B+\gl v^\gl(z))
\\&\,=
(1-\nu)R\ep-(1-\nu)\ep(B+\gl v^\gl(z))
\geq (1-\nu)\ep. 
\end{aligned}
\]

Thus, setting $\hat\ep:=\ep\min\{R^{-1}\nu,\, 1-\nu\}$,
we find that 
\begin{equation}\label{thm1-n-5}
\hat \psi\leq g-\hat\ep \ \ \text{ on }\ \bry,
\end{equation}
and
\begin{equation}\label{thm1-n-5+}
\inf_{\mu\in\cP^\rN_{1,K}}\lan\mu, L-\hat\phi
+\gl(\hat u-v^\gl)(z)\ran>\hat\ep.
\end{equation}

We now use these estimates to show that there is positive constant $\gth$ such that $w:=\gth \hat u$ is a 
subsolution of 
\begin{equation}\label{thm1-n-6}
\begin{cases}
\gl w+F[w]=-\gl (v^\gl-w)(z)-\gth\hat\ep\ \ \text{ in }\gO&\\[3pt]
\gamma\cdot Dw\leq g \ \ \ \text{ on }\bry. 
\end{cases}
\end{equation}
After this is done, we easily get a contradiction as follows: 
observe that the function  
$\xi:=w+ (v^\gl-w)(z)+\gl^{-1}\gth\hat\ep$ is a subsolution 
of $\gl \xi+F[\xi]=0$ in $\gO$ and $\gamma \cdot D\xi=g$ on $\bry$,
and, by comparison principle \eqref{CPN}, that  
$\xi\leq v^\gl$ on $\tt$, which, evaluated at $z$, gives 
$\,\gl^{-1}\gth \hat \ep\leq 0$. We thus get a contradiction.

To show \eqref{thm1-n-6}, we treat first the case 
when $\cA$ is compact. Select $K=\cA$ 
and note that   
$\gd_{(x,\ga)}\in\cP^\rN_{1,K}$ for all $(x,\ga)\in\bb$. 
Thus, from \eqref{thm1-n-5+}, we get
\[
\hat\phi<L-\gl (v^\gl-\hat u)(z)-\hat\ep \ \ \text{ on }\bb,
\]
and we see that $\hat u$ is a subsolution of \eqref{thm1-n-6}, with $\gth=1$.

Consider next the case where $\cA$ is not compact. 
Choose a point $\ga_0\in\cA$ and a constant $L_0>0$ so that 
\begin{equation}\label{thm1-n-7}
\max_{x\in\tt}L(x,\ga_0)\leq L_0 
\ \ \ 
\text {and } \ \ \ \gl|v^\gl(z)|\leq L_0.
\end{equation}
Let $L_1>0$ be a constant to be fixed later, and, in view of 
\eqref{L}, we select a compact set $K_0\subset\cA$ so that 
\begin{equation}\label{thm1-n-8}
L(x,\ga)\geq \max\{L_0,L_1\} \ \ \ \text{ for all }\
(x,\ga)\in\tt\tim(\cA\setminus K_0).
\end{equation}
Pick a point $\ga_1\in\cA\setminus K_0$ and set $K=K_0\cup\{\ga_1\}$.
Since $\gd_{(x,\ga)}\in\tt\tim \cP_{1,K}$ for all $(x,\ga)\in\tt\tim K$,  
by \eqref{thm1-n-5+}, we get 
\begin{equation}\label{thm1-n-9}
\gl (v^\gl-\hat u)(z)+\hat\ep<(1-\hat t)L(x,\ga)-\hat\chi(x) \ \ \text{ for all }\ (x,\ga)\in\tt\tim K. 
\end{equation}

We divide the argument into two cases. 
Consider first the case when $\hat t\leq 1$. 
We repeat the same lines as in the proof of Theorem \ref{thm1-sc}, to deduce that
\[
(\hat t-1)L+\hat \chi<
L-\gl (v^\gl-\hat u)(z)-\hat \ep \ \text{ on }\bb,
\]
then that
\[
\hat\phi=L+(\hat t-1)L+\hat \chi<
-\gl (v^\gl-\hat u)(z)-\hat \ep \ \text{ on }\bb,
\]
and, hence, that $\hat u$ is a subsolution of \eqref{thm1-n-6}, with $\gth=1$.

Secondly, we consider the case when $\hat t\geq 1$. 
Again, repeating the same lines as in the proof of Theorem \ref{thm1-sc}, we obtain 
\[
\hat\phi<\hat tL-\hat t\gl v^\gl(z)+\gl \hat u(z)-\hat\ep 
\ \ \text{ in }\bb, 
\] 
which ensures that $w:={\hat t}^{-1}\hat u$ is a subsolution 
of 
\begin{equation}\label{thm1-n-10}\left\{\begin{aligned}
&\gl w+F[w]=\gl(u-v^\gl)(z)-{\hat t}^{-1}\hat\ep\ \  \text{ in }\ \gO,
\\&\gamma\cdot Dw={\hat t}^{-1}(g-\hat \ep) \ \ \text{ on }\ \bry.
\end{aligned}\right.\end{equation}

Our next step is to show that there exists $L_1$ for which  
$\hat t^{-1}(g-\hat \ep)\leq g$ on $\bry$. 
For this, we give an upper bound of $p$, independent of the choice 
of $L_1$ (and, hence, $K$). Because of \eqref{L}, we may take a positive constant $C_0$ so that $L\geq -C_0$ on $\bb$. 
Recall that $\nu(t-1)=\hat t-1>0$, and observe that
\[
L-\phi =(1-t)L-\chi
\leq (t-1)C_0+tM \ \ \text{ on }\ \bb,
\]
and, moreover, 
\[
\lan\mu, L-\phi\ran\leq (t-1)C_0+tM \ \ \text{ for all }\ \mu\in\cP^\rN_{1,K}.
\]
According to Lemma \ref{lem-n-3}, there exists 
a constant $C_1>0$, depending only on $\gO$, $F$ and $\gl$, 
such that 
\[
\gl u\leq C_1Mt \ \ \ \text{on }\ \tt.
\]
Thus, by the definition of $p$, we get
\begin{equation}\label{est-p}
p\leq (t-1)C_0+(C_1+1)Mt+\gl |v^\gl(z)|.
\end{equation}

Next, by \eqref{thm1-n-4++}, we get 
\[
(\phi-L)(x,\ga_1)\leq\gl(u-v^\gl)(z) \ \ \ \text{ for all }\ x\in\tt, 
\]
and moreover,
\[
(t-1)L_1\leq -\chi(x)+\gl(u-v^\gl)(z)
\leq (C_1+1)Mt+|\gl v^\gl(z)| \ \  \ \text{ for all }\ x\in\tt.
\]
Hence, if $L_1>(C_1+1)M$, then we get 
\begin{equation}\label{est-t}
t\leq \fr{L_1+\gl|v^\gl(z)|}{L_1-(C_1+1)M}.
\end{equation}

Now, set 
\[
p_0:=C_0+2(C_1+1)M+\gl |v^\gl(z)|,\quad\ep_0:=\min\Big\{\fr{\ep^2}{R\ep+p_0},\,\fr{\ep}{R+1}\Big\}, 
\]
and
\[
\tau:=1+\fr{\ep_0}{\|g\|_{C(\bry)}+1},
\]
and note that $p_0>0$, $0<\ep_0<1$ and $1<\tau<2$ and that these constants 
$p_0,\,\ep_0$ and $\tau$ are independent of the choice 
of $L_1$ and $K$.
Then fix $L_1>(C_1+1)M$ large enough so that
\[
\fr{L_1+\gl|v^\gl(z)|}{L_1-(C_1+1)M}\leq \tau. 
\]
Observe by \eqref{est-t} that $t\leq \tau$, which implies 
together with \eqref{est-p} and \eqref{thm1-n-4+} that
$\ep<p\leq p_0$. This implies moreover that
\[
\fr{R\ep}{R\ep+p_0}\leq \nu<\fr{R}{R+1},
\]
and thus,
\[
\hat\ep\geq \ep\min\Big\{\fr{\ep}{R\ep+p_0},\fr{1}{R+1}\Big\}=\ep_0. 
\]
Hence, 
\[\begin{aligned}
\hat\ep&\,\geq \ep_0=(\tau-1)\left(\|g\|_{C(\bry)}+1\right)
> (t-1)\|g\|_{C(\bry)}
\\&\,\geq (\hat t-1)\|g\|_{C(\bry)}
\geq -(\hat t-1)g \ \ \text{ on }\ \bry,
\end{aligned}\] 
and therefore,
\[
{\hat t}^{-1}(g-\hat\ep)<g \ \ \ \text{ on }\ \bry.
\]
We now conclude from this and \eqref{thm1-n-10} 
that $w:=\hat{t}^{-1}\hat u$ is a subsolution 
of \eqref{thm1-n-6}, with 
$\gth=\hat{t}^{-1}$, which completes the proof.  
\end{proof}

\begin{lem} \label{est-mu}
Assume \eqref{F1}, \eqref{F2}, \eqref{L} and \eqref{OG}. Let $(z,\gl,M)\in\tt\tim(0,\,\infty)\tim(0,\,\infty)$ 
and $(\mu_1,\mu_2)\in\cP^\rN\cap\cG^\rN(M,z,\gl)'$. 
For each $\ep>0$ there exists a constant 
$C>0$, depending only on $\ep$, $\gO$ and $F$, 
such that if $M>C$, then 
\[
\mu_2(\bry)\leq \ep \lan\mu_1,L\ran+C(1+\gl).
\] 
\end{lem}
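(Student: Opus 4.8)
The plan is to read the bound directly off the dual-cone membership $(\mu_1,\mu_2)\in\cG^{\rN}(M,z,\gl)'$ by testing it against a \emph{single}, explicitly constructed element of $\cF^{\rN}(M,\gl)$ whose prescribed Neumann datum $\psi$ is a large negative constant. Pairing $\psi$ against $\mu_2$ produces a term $-\sigma\,\mu_2(\bry)$, and nonnegativity of the pairing forces this term to be absorbed by $\lan\mu_1,L\ran$ and by quantities depending only on $F$, $\gO$ and $\gl$; that is exactly the claimed inequality, with $\sigma$ chosen equal to $1/\ep$ at the very end.

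Here is the construction I would use. By \eqref{gz}, condition \eqref{OG} provides $\gz\in C^2(\tt)$ with $\gamma\cdot D\gz\ge1$ on $\bry$, and (as in the proof of Proposition \ref{prop1-n}) we may take $\gz\ge0$ on $\tt$. Fix $\sigma>0$ and set
\[
\Lambda_\sigma:=\max_{x\in\tt}F\big(x,-\sigma D\gz(x),-\sigma D^2\gz(x)\big),
\]
a finite constant by \eqref{F2} depending only on $F$, $\gO$ and $\sigma$; then put $\chi:=\Lambda_\sigma$, $\phi:=L+\chi$, $\psi:=-\sigma$ and $u:=-\sigma\gz$. Since $\phi=1\cdot L+\chi$, the formula for $F_\phi$ from the Introduction gives $F_\phi[v]=F(\cdot,Dv,D^2v)-\chi$, so, using $\gz\ge0$, one checks the classical inequalities $\gl u+F_\phi[u]\le-\gl\sigma\gz\le0$ in $\gO$ and $\gamma\cdot Du=-\sigma\gamma\cdot D\gz\le-\sigma=\psi$ on $\bry$. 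By the consequence of \eqref{OG} recorded around \eqref{eq-loc} (take $V=\R^n$), $u$ is then a viscosity subsolution of \eqref{N'}, hence $(\phi,\psi,u)\in\cF^{\rN}(\gl)$. Moreover $\|\chi\|_{C(\tt)}=|\Lambda_\sigma|$ and $\|\psi\|_{C(\bry)}=\sigma$, so $(\phi,\psi)\in\Psi^+(M)$ and therefore $(\phi,\psi,u)\in\cF^{\rN}(M,\gl)$, provided $M>\max\{|\Lambda_\sigma|,\sigma\}$.

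Assuming this restriction on $M$, the next step is to feed $\big(\phi-\gl u(z),\psi\big)\in\cG^{\rN}(M,z,\gl)$ into the definition of the dual cone. Using $\mu_1(\bb)=1$ and $u(z)=-\sigma\gz(z)$, this yields
\[
0\le\lan\mu_1,L\ran+\Lambda_\sigma+\gl\sigma\gz(z)-\sigma\,\mu_2(\bry),
\]
and, since $0\le\gz(z)\le\|\gz\|_{C(\tt)}$, dividing by $\sigma$ gives
\[
\mu_2(\bry)\le\sigma^{-1}\lan\mu_1,L\ran+\sigma^{-1}\Lambda_\sigma+\gl\|\gz\|_{C(\tt)}.
\]
Finally I would take $\sigma:=1/\ep$ and $C:=\max\{1/\ep,\,|\Lambda_{1/\ep}|,\,\ep|\Lambda_{1/\ep}|,\,\|\gz\|_{C(\tt)}\}$, which depends only on $\ep$, $\gO$ and $F$. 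If $M>C$, then $M>\max\{|\Lambda_{1/\ep}|,1/\ep\}$, so the construction above is legitimate, and the last display becomes $\mu_2(\bry)\le\ep\lan\mu_1,L\ran+\ep\Lambda_{1/\ep}+\gl\|\gz\|_{C(\tt)}\le\ep\lan\mu_1,L\ran+C(1+\gl)$, as required.

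The only point that needs real care — the main obstacle, such as it is — is arranging that the single test triple genuinely lies in $\cF^{\rN}(M,\gl)$: the large boundary slope forced on $u$ must not be too expensive in the interior PDE (this is handled by the constant shift $\chi=\Lambda_\sigma$ together with $\gz\ge0$), and the two size constraints $\|\chi\|_{C(\tt)}<M$ and $\|\psi\|_{C(\bry)}<M$ built into $\Psi^+(M)$ are precisely what dictates the threshold $M>C$ in the statement. Beyond bookkeeping these constants there is no genuine difficulty, and the coercivity \eqref{L} enters only implicitly, ensuring that $\lan\mu_1,L\ran$ is a well-defined real number.
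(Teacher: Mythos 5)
Your proof is correct and is essentially the paper's own argument: the paper also tests the dual-cone inequality against a single explicit triple built from the auxiliary function of \eqref{OG} (there $\eta\le 0$ with $\gamma\cdot D\eta\le -1$, the triple $(L+C_1,\,-\ep^{-1},\,\ep^{-1}\eta)$), which is your construction up to the sign convention $\eta=-\gz$ and the choice of the constant shift. The bookkeeping of the threshold on $M$ and of the final constant matches the paper's as well.
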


\begin{proof} According to \eqref{OG} or \eqref{gz}, there exists a function $\eta\in C^2(\tt)$ such that
\[
\gamma\cdot D\eta\leq -1 \ \ \text{ on }\ \bry 
\quad\text{ and }\quad \eta\leq 0\ \ \text{ on }\ \tt.
\]
Fix any $\ep>0$, set 
\[
C_1:=\|F[\ep^{-1}\eta]\|_{C(\tt)}+\ep^{-1},
\]
and observe that if $M>C_1$, then 
\[
(L+C_1,-\ep^{-1},\ep^{-1}\eta)\in\cF^\rN(M,z,\gl). 
\]
Assume that $M>C_1$. 
Since $(\mu_1,\mu_2)\in\cP^\rN\cap\cG^\rN(M,z,\gl)'$, we get 
\[
0\leq \lan\mu_1,L+C_1-\gl \ep^{-1}\eta(z)\ran
+\lan\mu_2,-\ep^{-1}\ran,
\]
which yields
\[
\mu_2(\bry)\leq \ep \lan\mu_1,L\ran
+\ep C_1+\gl\|\eta\|_{C(\bry)}.
\]
If we set $C=(1+\ep)C_1+\gl\|\eta\|_{C(\bry)}$ 
and if $M>C$, then 
we have
\[
\mu_2(\bry)\leq \ep \lan\mu_1,L\ran
+C(1+\gl).
\]
The proof is now complete. 
\end{proof}

\begin{proof}[Proof of Theorem \ref{thm1-n}] We here prove only assertion 
(ii), since the proof of (i) is similar and slightly simpler. 

Fix a point $z\in\tt$. 
For each $\gl>0$, let $v^\gl\in C(\tt)$ be a solution of \eqref{N}. 
Recall that 
\begin{equation}\label{thm1-np-1}
-c_\rN=\lim_{\gl\to 0+}\gl v^\gl(z).
\end{equation}

By Lemma \ref{est-mu}, there exists a constant $C_1>0$,
depending only on $\|g\|_{C(\bry)}$, $\gO$ and $F$, 
such that for any $(M,\gl)\in(0,\,\infty)^2$ and  
$(\mu_1,\mu_2)\in\cP^\rN\cap\cG^\rN(M,z,\gl)'$, if
$M\geq C_1$, then 
\begin{equation}\label{thm1-np-2}
\mu_2(\bry)\leq \fr{1}{2\|g\|_{C(\bry)}+1}\lan\mu_1,L\ran+C_1(1+\gl)
\end{equation}
 
Fix a sequence $\{\gl_j\}_{j\in\N}\subset(0,\,1)$
converging to zero. 
Thanks to Theorem \ref{thm1-n1}, 
for each $j\in\N$, there exists 
$(\mu_1^j,\mu_2^j)\in\cP^\rN\cap \cG^\rN(j,z,\gl_j)'$ 
such that 
\begin{equation}\label{thm1-np-3}
\gl_j v^{\gl_j}(z)+\fr{1}{j}\geq \lan\mu_1^j,L\ran+\lan\mu_2^j,g\ran.
\end{equation}
From this, we get
\[
\lan\mu_1^j, L\ran
\leq \gl_j v^{\gl_j}(z)+1+\|g\|_{C(\bry)}\mu_2(\bry).
\]
Combine this with \eqref{thm1-np-2}, to obtain
\[
\lan\mu_1^j, L\ran\leq 2\left(\gl_j v^{\gl_j}(z)+1+
\|g\|_{C(\bry)}C_1(1+\gl_j)\right)\ \ \ \text{ if }\ j>C_1,
\]
and to find that the sequences 
\[
\{\lan\mu_1^j,L\ran\}_{j\in\N}
\quad
\text{ and }\quad\{\mu_2^j(\bry)\}_{j\in\N}
\]
are bounded from above and, hence, bounded. 

The boundedness of the two sequences above implies, with help of Lemma \ref{basic-cpt}, that 
the sequence $\{(\mu_1^j,\mu_2^j)\}_{j\in\N}$ of measures 
has a subsequence, convergent in the topology of 
weak convergence, which we denote by the same symbol. Let $(\mu_1,\mu_2)\in\cP^\rN$ be the limit of 
the sequence $\{(\mu_1^j,\mu_2^j)\}_{j\in\N}$.
By Lemma \ref{basic-lsc}, we have
\[
\lan\mu_1, L\ran\leq \liminf_{j\to\infty}\lan\mu_1^j,L\ran.
\] 
By passing again to a subsequence if necessary, 
we may assume that the sequence $\{\lan\mu_1^j,L\ran\}_{j\in\N}$
is convergent.

We set
\[
\rho:=\lim_{j\to\infty}\du{\mu_1^j,L}-\du{\mu,L}\,(\,\geq 0\,).
\] 
Sending $j\to\infty$ in \eqref{thm1-np-3} yields 
together with \eqref{thm1-np-1}
\begin{equation}\label{thm1-np-4}
-c_\rN\geq \rho+\du{\mu_1,L}+\du{\mu_2,g}.
\end{equation}
Note that if $\cA$ is compact, then $\rho=0$. 

Next, we show that $(\mu_1,\mu_2)\in\cG^\rN(0)'$. 
Pick $(tL+\chi,\psi,u)\in\cF^\rN(0)$,
where $t>0$ and $\chi\in C(\tt)$, 
and note that
$(tL+\chi+\gl_j u,\psi,u)\in\cF^\rN(M,\gl_j)$ 
for all $j\in\N$ and some constant $M>0$. The dual  
property  
of $\cG^\rN(M,\gl_j)'$ yields
\[
0\leq\du{\mu_1^j,tL+\chi+\gl_ju-\gl_ju(z)}+\du{\mu_2^j,\psi} 
\ \ \ \text{ if }\ j>M.
\]
Sending $j\to\infty$ gives 
\begin{equation}\label{thm1-np-5}
0\leq t(\rho+\du{\mu_1,L})+\du{\mu_1,\chi}+\du{\mu_2,\psi}.
\end{equation}

Owing to Lemma \ref{mod}, we may choose $\tilde \mu_1
\in\cP_1^\rN$ such that 
\[
\du{\tilde\mu_1,L}=\rho+\du{\mu_1,L}
\quad\text{ and }\quad 
\du{\tilde\mu_1,\gz}=\du{\mu_1,\gz} \ \ \text{ for all }\ 
\gz\in C(\tt).
\]
(If $\cA$ is compact, then we choose $\tilde\mu_1=\mu_1$.) 
This and \eqref{thm1-np-5} give
\[
0\leq \du{\tilde\mu_1,tL+\chi}+\du{\mu_2,\psi} 
\ \ \ \text{ for any }\ (tL+\chi,\psi,u)\in\cG^\rN(0),
\]
which ensures that $(\tilde\mu_1,\mu_2)\in
\cP^\rN\cap\cG^\rN(0)'$. Also, applying the above to 
a solution $(u,c_\rN)$ of \eqref{EN}, which implies 
$(L+c_\rN,g,u)\in\cF^\rN(0)=\cG^\rN(0)$, yields
\[
-c_{\rN}\leq\du{\tilde\mu_1,L}+\du{\mu_2,g}.
\] 
Inequality \eqref{thm1-np-4} now 
reads
\[
-c_\rN\geq \du{\tilde \mu_1,L}+\du{\mu_2,g}.
\] 
Thus, we have
\[
-c_\rN=\du{\tilde \mu_1,L}+\du{\mu_2,g}
=\inf_{(\nu_1,\nu_2)\in\cP^\rN\cap\cG^\rN(0)'}
(\du{\nu_1,L}+\du{\nu_2,g}),
\]
which finishes the proof.
\end{proof}

\subsection{Convergence with vanishing discount}

As an application of Theorem \ref{thm1-n}, we prove 
here one of main results in this section.

\begin{thm}\label{conv-n} Assume \eqref{F1},
\eqref{F2}, \eqref{OG}, \eqref{CPN'}, \eqref{SLN} and  \eqref{EC}. 
For $\gl>0$, let $v^\gl$ be a solution of \eqref{N}. 
Then 
\[
\lim_{\gl\to 0+}
(v^{\gl}+\gl^{-1}c_\rN)=u \ \ \ \text{ in }\ C(\tt)
\]
for some $u\in C(\tt)$, and $u$ is a solution of \emph{(EN$_{c_\rN}$)}.
\end{thm}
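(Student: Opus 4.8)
The plan is to transcribe the argument of Theorem~\ref{thm2-sc} (and Theorem~\ref{thm2-d}) to the Neumann setting, using the representation formulas of Theorem~\ref{thm1-n} in place of the state-constraint ones.

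\textbf{Step 1: normalization and relative compactness.} Replacing $F$ and $L$ by $F-c_\rN$ and $L+c_\rN$ leaves $\gamma$ and $g$ unchanged and turns $v^\gl$ into $v^\gl+\gl^{-1}c_\rN$, so I may assume $c_\rN=0$. By Proposition~\ref{prop1-n} there is a solution $(u_0,0)$ of \eqref{EN}; since $F$ does not depend on $u$, the functions $u_0\pm\|u_0\|_{C(\tt)}$ are a super- and a subsolution of \eqref{N} for every $\gl>0$, so \eqref{CPN} gives $\|v^\gl\|_{C(\tt)}\le2\|u_0\|_{C(\tt)}$; together with \eqref{EC}, the family $\{v^\gl\}_{\gl>0}$ is relatively compact in $C(\tt)$. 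Let $\cU\ne\emptyset$ denote its set of accumulation points as $\gl\to0+$. Because $\gl v^\gl\to0$ uniformly (Proposition~\ref{prop1-n}) and viscosity sub/supersolutions are stable under uniform limits (including at the oblique boundary, by \eqref{OG}), every $u\in\cU$ solves \emph{(EN$_0$)}. It thus suffices to prove $v\ge w$ on $\tt$ for any $v,w\in\cU$.

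\textbf{Step 2: a limiting viscosity Mather measure.} Fix $v,w\in\cU$ and sequences $\gl_j,\gd_j\to0$ with $v^{\gl_j}\to v$, $v^{\gd_j}\to w$ in $C(\tt)$, and fix $z\in\tt$. Using Theorem~\ref{thm1-n}(i), pick $(\mu_1^j,\mu_2^j)\in\cM^\rN(z,\gl_j)$, so that
\[
\gl_j v^{\gl_j}(z)=\lan\mu_1^j,L\ran+\lan\mu_2^j,g\ran\longrightarrow-c_\rN=0 .
\]
Now I reproduce the compactness step of the proof of Theorem~\ref{thm1-n}(ii): Lemma~\ref{est-mu} bounds $\mu_2^j(\bry)$ linearly in $\lan\mu_1^j,L\ran$, which with the identity above forces both $\{\lan\mu_1^j,L\ran\}_j$ and $\{\mu_2^j(\bry)\}_j$ to be bounded; Lemma~\ref{basic-cpt} extracts a weakly convergent subsequence with limit $(\mu_1,\mu_2)$, where $\mu_1$ is a probability measure on $\bb$ (tightness from the $L$-bound, so in particular $\mu_1\ne0$) and $\mu_2\in\cR_{\bry}^+$; Lemma~\ref{basic-lsc} gives $\rho:=\lim_j\lan\mu_1^j,L\ran-\lan\mu_1,L\ran\ge0$; and Lemma~\ref{mod} furnishes $\tilde\mu_1\in\cP_{\bb}$ with $\lan\tilde\mu_1,L\ran=\rho+\lan\mu_1,L\ran$ and $\lan\tilde\mu_1,\psi\ran=\lan\mu_1,\psi\ran$ for all $\psi\in C(\tt)$. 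Exactly as in Theorem~\ref{thm1-n}(ii), passing to the limit in the $\cF^\rN(M,\gl_j)$ dual inequalities shows $(\tilde\mu_1,\mu_2)\in\cP^\rN\cap\cG^\rN(0)'$, and the displayed identity yields $\lan\tilde\mu_1,L\ran+\lan\mu_2,g\ran=0=-c_\rN$; thus $(\tilde\mu_1,\mu_2)$ is a viscosity Mather measure for \eqref{EN}, and moreover $\lan\mu_1^j,\psi\ran\to\lan\mu_1,\psi\ran$, $\lan\mu_2^j,h\ran\to\lan\mu_2,h\ran$ for all $\psi\in C(\tt)$, $h\in C(\bry)$.

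\textbf{Step 3: two inequalities and conclusion.} Since $v^{\gd_j}$ solves \eqref{N} with discount $\gd_j$, one checks $(L-\gd_j v^{\gd_j},g,v^{\gd_j})\in\cF^\rN(0)$, hence $(L-\gd_j v^{\gd_j},g)\in\cG^\rN(0)$; the dual inequality for $(\tilde\mu_1,\mu_2)$ together with $\lan\tilde\mu_1,L\ran+\lan\mu_2,g\ran=0$ and $\lan\tilde\mu_1,\cdot\ran=\lan\mu_1,\cdot\ran$ on $C(\tt)$ gives $\lan\mu_1,v^{\gd_j}\ran\le0$, whence $\lan\mu_1,w\ran\le0$ in the limit. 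Since $w$ solves \emph{(EN$_0$)}, one checks $(L+\gl_j w,g,w)\in\cF^\rN(\gl_j)$, hence $(L+\gl_j w-\gl_j w(z),g)\in\cG^\rN(z,\gl_j)$; the dual inequality for $(\mu_1^j,\mu_2^j)$, using $\mu_1^j(\bb)=1$ and $\lan\mu_1^j,L\ran+\lan\mu_2^j,g\ran=\gl_j v^{\gl_j}(z)$, reads $0\le v^{\gl_j}(z)+\lan\mu_1^j,w\ran-w(z)$ after dividing by $\gl_j$; letting $j\to\infty$ gives $0\le v(z)+\lan\mu_1,w\ran-w(z)$. Adding the two conclusions, $v(z)\ge w(z)$; as $z$ was arbitrary, $v\ge w$ on $\tt$, and by symmetry $v=w$. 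Hence $\cU$ is a singleton, $\{v^\gl+\gl^{-1}c_\rN\}_{\gl>0}$ converges in $C(\tt)$, and the limit solves \emph{(EN$_{c_\rN}$)}. The only genuinely delicate point is Step~2: one must simultaneously control the mass of $\mu_2^j$ on $\bry$ (via Lemma~\ref{est-mu}) and the escape of mass of $\mu_1^j$ to infinity in the noncompact control set $\cA$, and then verify that the corrected limit $(\tilde\mu_1,\mu_2)$ still lies in the dual cone $\cG^\rN(0)'$, which forces a reuse of the $\cF^\rN(M,\gl_j)$-to-$\cF^\rN(0)$ limiting device from the proof of Theorem~\ref{thm1-n}(ii); it is cleanest to isolate this as a stand-alone "compactness of Mather/Green measures" lemma, the Neumann analogue of Lemmas~\ref{sc-cpt} and~\ref{d-cpt}, and then quote it. Everything else is a routine copy of the state-constraint argument.
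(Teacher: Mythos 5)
Your proof is correct and follows essentially the same route as the paper's: relative compactness of $\{v^\gl+\gl^{-1}c_\rN\}$ via comparison with a solution of (EN$_{c_\rN}$), extraction of a limiting viscosity Mather measure $(\tilde\mu_1,\mu_2)$ from the Green measures by reprising the compactness/modification argument of Theorem~\ref{thm1-n}(ii), and then the two dual-cone inequalities (one against $(L-\gd_j v^{\gd_j},g,v^{\gd_j})\in\cF^\rN(0)$, one against $(L+\gl_j w,g,w-\gl_j^{-1}c_\rN)\in\cF^\rN(\gl_j)$) to conclude $w(z)\le v(z)$. The only cosmetic deviations are the explicit normalization $c_\rN=0$ and the fact that you spell out the measure-compactness step which the paper condenses into ``as in the proof of Theorem~\ref{thm1-n}''.
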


\begin{proof} Let $u_0\in C(\tt)$ be a solution of (EN$_{c_\rN}$). Observe that for each $\gl>0$, the functions 
$u_0+\|u_0\|_{C(\tt)}-\gl^{-1}c_\rN$ and $u_0-\|u_0\|_{C(\tt)}-\gl^{-1}c_\rN$ are a supersolution and a subsolution of 
\eqref{N}, respectively, and apply \eqref{CPN}, to get 
$\|v^\gl+\gl^{-1}c_\rN\|_{C(\tt)}\leq 2\|u_0\|_{C(\tt)}$ 
for all $\gl>0$, which, combined with \eqref{EC}, shows  
that the family 
$\{v^\gl+\gl^{-1}c_\rN\}_{\gl>0}$ is relatively compact in 
$C(\tt)$.  

Let $\cU$ denote the set of accumulation points of
$\{v^\gl+\gl^{-1}c_\rN\}_{\gl>0}$ in $C(\tt)$ as $\gl\to 0$.
The relative compactness of the family implies that $\cU\not=\emptyset$. Also, it is a standard observation that 
any $v\in\cU$ is a solution of 
(EN$_{c_\rN}$). 

Now, we prove that $\cU$ has a single element, which 
ensures that \eqref{conv-n} holds. 
Let $v,\, w\in\cU$, and select sequences $\{\gl_j\}_{j\in\N}$ 
and $\{\gd_j\}_{j\in\N}$ of positive numbers so that 
\[
\begin{cases}\disp
\lim_{j\to\infty}\gl_j=\lim_{j\to\infty}\gd_j=0,&\\
\disp
\lim_{j\to\infty}\left(v^{\gl_j}+\gl_j^{-1}c_\rN\right)=v \ \ \text{ in }\ C(\tt),&\\
\disp
\lim_{j\to\infty}\left(v^{\gd_j}+\gd_j^{-1}c_\rN\right)=w \ \ \text{ in }\ C(\tt).  
\end{cases}
\]
Fix any $z\in\tt$, and, owing to Theorem \ref{thm1-n}, choose 
a sequence $\{(\mu_1^j,\mu_2^j)\}_{j\in\N}$ of measures 
so that, for every $j\in\N$,
 $(\mu_1^j,\mu_2^j)\in\cP^\rN\cap \cG^\rN(z,\gl_j)'$
and
\begin{equation}\label{conv-np-1}
\gl_j v^{\gl_j}(z)=\du{\mu_1^j,L}+\du{\mu_2^j,g}.
\end{equation}

As in the proof of Theorem \ref{thm1-n}, after passing to a subsequence of $\{\gl_j,\mu_1^j,\mu_2^j\}_{j\in\N}$ if necessary,
we may assume that $\{(\mu_1^j,\mu_2^j)\}_{j\in\N}$ 
converges to some $(\mu_1,\mu_2)\in\cP^\rN$ and that 
for some $\tilde\mu_1\in\cP_1^\rN$, 
\begin{align}
&\lim_{j\to\infty}\du{\mu_1^j,L}=\du{\tilde\mu_1,L},\notag
\\&(\tilde\mu_1,\mu_2)\in\cG^\rN(0)',\notag
\\&\du{\tilde \mu_1,\psi}=\du{\mu_1,\psi} \ \ \ \text{ for all }\ \psi\in C(\tt),\notag
\\&-c_\rN=\du{\tilde\mu_1,L}+\du{\mu_2,g}.\label{conv-np-3}
\end{align}

Since $w$ is a solution of 
(EN$_{c_\rN}$), 
we have 
$(L+\gl_j w,g,w-\gl_j^{-1}c_\rN)\in\cF^\rN(\gl_j)$ for all 
$j\in\N$. Hence, by the dual cone property of 
$\cG^\rN(z,\gl_j)'$, we get 
\[
0\leq \du{\mu_1^j,L+\gl_j w-\gl_j(w(z)-\gl_j^{-1}c_\rN)}
+\du{\mu_2^j,g}.
\] 
Combining this with \eqref{conv-np-1} yields
\[
0\leq \gl_j \left(v^{\gl_j}(z)+\gl_j^{-1}c_\rN
-w(z)+\du{\mu_1^j,w}\right) \ \ \ \text{ for all }\ j\in\N,
\]
which implies in the limit $j\to\infty$ that
\begin{equation}\label{conv-np-2}
w(z)\leq v(z)+\du{\mu_1,w}=v(z)+\du{\tilde \mu_1,w}.
\end{equation}

Next, we note that $(L-\gd_j v^{\gd_j},g,v^{\gd_j})
\in\cF^\rN(0)$ and use the 
fact that $(\tilde\mu_1,\mu_2)\in\cG^\rN(0)'$ 
and then \eqref{conv-np-3}, 
to get
\[
0\leq \du{\tilde\mu_1,L-\gd_j v^{\gd_j}}
+\du{\mu_2,g}=-\gd_j\du{\tilde\mu_1,v^{\gd_j}+\gd_j^{-1}c_\rN},
\]
which yields, after division by $\gd_j$ and taking the limit 
$j\to\infty$,
\[
\du{\tilde\mu_1,w}\leq 0.
\]
From this and \eqref{conv-np-2}, we get $\ w(z)\leq v(z)$, 
and, since $z\in\tt$ and $v,w\in\cU$ are arbitrary, 
we conclude that $\cU$ has only one element. 
\end{proof}

\section{Examples} \label{sec-ex}
In this section, we give some examples to which the main theorems in this paper, 
Theorems \ref{thm2-sc}, \ref{thm2-d}, \ref{conv-n},  can be applied. 
For the case of Theorem \ref{conv-n}, we always assume that \eqref{OG} holds.

\subsection{First-order Hamilton-Jacobi equations}
We consider the first-order Hamilton-Jacobi equations \eqref{DP} 
and \eqref{E}, where the function $F$ is replaced by 
$H=H(x,p)$ on $\tt\tim\R^n$. The Hamiltonian $H$ is assumed to 
be continuous and, moreover, satisfies
two conditions: $H$ is coercive and convex, that is, 
\[
\lim_{|p|\to\infty}H(x,p)=+\infty \ \ \ \text{ uniformly for }\ x\in \tt,
\] 
and 
\[
\text{ for every $x\in\tt$, the function $p\mapsto H(x,p)$ is convex 
in $\R^n$.}
\]
We assume as well that $\gO$ has a $C^1$-boundary.  
Thanks to \cite{Ishi, BaMi}, for every $\gl>0$,
the solution $v^\gl$ of each of \eqref{S}, \eqref{D} and \eqref{N}
exists and the 
family $\{v^\gl\mid \gl>0\}$ is equi-Lipschitz on $\tt$, which, in particular, implies that \eqref{EC} holds.  
Also, the comparison principles \eqref{CP}, \eqref{CPS}, 
\eqref{CPD} and \eqref{CPN'} hold. 

We take the advantage of the equi-Lipschitz property of $\{v^\gl\mid\gl>0\}$ and replace $H$ by another convex and coercive Hamiltonian $\widetilde H\in C(\tt\tim\R^n)$ that satisfies: 
(i) $\widetilde H(x,p)=H(x,p)$ for all $(x,p)\in \tt\tim B_{M_0}$, 
where $M_0>0$ is a Lipschitz bound for $\{v^\gl\mid\gl>0\}$ and 
$B_{M_0}$ denotes the ball of radius $M_0$ with center at the origin, 
and (ii) $\lim_{|p|\to\infty}\widetilde H(x,p)/|p|=+\infty$ 
uniformly on $\tt$. Let $L$ be the Lagrangian, 
corresponding to $\widetilde H$, given by
\[
L(x,\ga)=\max_{p\in\R^n}(\ga\cdot p-\widetilde H(x,p)),
\]
which is a continuous function on $\tt\tim\R^n$. Moreover, 
we have
\[
\widetilde H(x,p)=\max_{\ga\in\R^n}(p\cdot \ga-L(x,\ga)),
\]
and
\[
\lim_{|\ga|\to\infty}L(x,\ga)=+\infty \ \ \ \text{ uniformly for }x\in\tt. 
\] 
Thus, in the present case, our choice of 
$\cA$, $a$ and $b$ are $\R^n$, $a(x,\ga)=0$ and  
$b(x,\ga)=-\ga$, respectively, and $L$ satisfies condition \eqref{L}.  
Furthermore, the solution $v^\gl$ of any of \eqref{S}, \eqref{D} or \eqref{N}, with 
$F=H$, is again a solution of the respective problem \eqref{S}, \eqref{D} or \eqref{N}, with $F$ replaced by $\widetilde H$. 

All the conditions and, hence, the convergence results 
of Theorems \ref{thm2-sc}, \ref{thm2-d}, and \ref{conv-n}, with $F=\widetilde H$, hold. This implies that the convergence assertions 
of Theorems \ref{thm2-sc}, \ref{thm2-d}, and \ref{conv-n}, with $F=H$, hold.

\subsection{Fully nonlinear, possibly degenerate elliptic equations with superquadratic growth in the gradient variable} \label{ss-superquad}
Firstly, set
\[
F_0(x,p,X):=\max_{\ga \in \cA_0} \left( - \tr\gs^t(x,\ga)\gs(x,\ga) X - b(x,\ga)\cdot p - L_0(x,\ga) \right),
\]
where $\cA_0$ is a compact subset of $\R^l$ for some $l \in \N$,
$\sigma \in \Lip(\tt \times \cA_0, \R^{k \times n})$ for some $k \in \N$, 
$b \in \Lip(\tt\times \cA_0,\R^n)$, and $L_0\in C(\tt \times \cA_0,\R)$.

Consider the operator $F$ of the form
\begin{align*}
&F(x,p,X):= 
\frac{c(x)}{m}|p|^m + F_0(x,p,X)\\
=&\, \sup_{(q,\ga)\in\R^n\times \cA_0}\left(- \tr\gs^t(x,\ga)\gs(x,\ga) X+q\cdot p  - b(x,\ga)\cdot p -\frac{c(x)^{\frac{-1}{m-1}}}{m'}|q|^{m'}- L_0(x,\ga)\right), 
\end{align*}
where $m>2$ and $c \in \Lip(\tt,\R)$ with $c>0$.
We set here $m':=m/(m-1)$. Assume further that $\partial \gO \in C^3$. 

It is worth pointing out that in this superquadratic case ($m>2$), the scale of the gradient term dominates the scale of the diffusion term.
Because of this feature, for any subsolution $u$ of $\gl u +F[u] =0$ in $\gO$ with $\gl \geq 0$ such that $u$ is bounded in $\tt$,
we have $u \in C^{0,\frac{m-2}{m-1}}(\tt)$
and the H\"older constant is dependent on 
$m$, $\|\sigma\|_{C(\tt\times \cA_0)}$, $\|b\|_{C(\tt\times \cA_0)}$, $\|L_0\|_{C(\tt\times \cA_0)}$, 
$\min_{\tt} c$, $\gl\|u^-\|_{L^\infty(\tt)}$ and $\partial \gO$
(see \cite{CaLePo}, \cite{Barl}, \cite{ArTr}).
This important point helps verifying the equi-continuity assumption \eqref{EC}.

It is clear that \eqref{F1}, \eqref{F2} and \eqref{L} hold. 
Thanks to Theorems 3.1, 3.2 and 4.1 in \cite{Barl}, \eqref{CP}, \eqref{CPS}, \eqref{CPD}, \eqref{SLS}, \eqref{SLD} and \eqref{EC} are valid.
Besides, if $u$ is a subsolution of \eqref{D'} with $\gl \geq 0$,
then $u \leq \psi$ pointwise on $\partial \gO$ (see \cite[Proposition 3.1]{DaL} or \cite[Proposition 3.1]{Barl} for the proof).

For the Neumann problem, we assume further that $\gamma \in C^2(\partial \gO)$. 
By using the ideas in \cite{Barl} with  careful modifications, we verify that
\eqref{CPN'} and \eqref{SLN} are valid.

Therefore, Theorems \ref{thm2-sc}, \ref{thm2-d}, \ref{conv-n} hold true.

\subsection{Fully nonlinear, possibly degenerate elliptic equations with superlinear growth in the gradient variable}
This example is only applicable to Neumann problem.
Let $F_0$ be as in Subsection \ref{ss-superquad}. Suppose that
\begin{align*}
&F(x,p,X):= 
\frac{|p|^m }{m}+ F_0(x,p,X)\\
=&\, \sup_{(q,\ga)\in\R^n\times \cA_0}\left(- \tr\gs^t(x,\ga)\gs(x,\ga) X+q\cdot p  - b(x,\ga)\cdot p -\frac{|q|^{m'}}{m'}- L_0(x,\ga)\right), 
\end{align*}
where $m>1$ and $m':=m/(m-1)$. Assume further that $\partial \gO \in C^{1,1}$ and $\gamma \in C^{1,1}(\partial \gO)$. 

This will be studied in details in the forthcoming paper \cite{IsMtTr3}.
It is verified there that all conditions in Theorem \ref{conv-n} are satisfied and thus Theorem \ref{conv-n} holds true.
As far as the authors know, this example has not been studied in the literature.

\subsection{Fully nonlinear, uniformly elliptic equations with linear growth in the gradient variable} \label{ss-fully}
We note that this example is only applicable to Dirichlet problem and Neumann problem.
We consider the operator
\[
F(x,p,X):= \max_{\ga \in \cA} \left(- \tr a(x,\ga) X -b(x,\ga)\cdot p-L(x,\ga)\right), 
\]
where $\cA$ is a compact metric space, $a\in \Lip(\tt \times \cA, \bS^n)$, $b\in \Lip(\tt \times \cA,\R^n)$, $L\in \Lip(\tt \times \cA,\R)$.
Assume that $a$ is uniformly elliptic, that is, 
there exists $\theta>0$ such that 
\[
\frac{1}{\theta} I \le a(x,\ga)\le \theta I \quad\text{for all} \ (x,\ga) \in\tt \times \cA, 
\]
where $I$ denotes the identity matrix of size $n$. 
It is obvious to see that \eqref{F1} and \eqref{F2} hold. 

For the Dirichlet problem, 
we assume that $\partial \gO \in C^{1,\beta}$  and $g \in C^{1,\beta}(\tt)$ for some $\beta \in (0,1)$.
Comparison principles \eqref{CP}, \eqref{CPD} are 
consequences of \cite[Theorem III.1 (1)]{IsLi}, \cite[Theorem 7.9]{CrIsLi}, respectively, under the Lipschitz 
continuity assumption on $a$, $b$ and $L$. 
By \cite[Theorem 3.1]{Tr}, \eqref{SLD} holds true. 
In view of the Krylov-Safonov estimate (see \cite{KrSa} and also \cite{Ca,Tr}), we have that
$\{v^\lambda\}_{\lambda>0}$ is equi-H\"older continuous on $\tt$, which implies (EC). 
Therefore, Theorem \ref{thm2-d} is valid. 

For the Neumann problem, we assume that $\partial \gO \in C^2$, $\gamma \in C^{0,1}(\partial \gO)$, and $g\in C^{0,\beta}(\partial \gO)$ for some $\beta \in (0,1)$.
Thanks to \cite[Theorem 7.12]{CrIsLi}, \eqref{CPN'} and  \eqref{SLN} are valid.
Furthermore, in view of \cite[Theorem A.1]{BaDa}, 
$\{v^\lambda\}_{\lambda>0}$ is equi-H\"older continuous on $\tt$, which implies (EC). 
These yield the validity of Theorem \ref{conv-n}.



\begin{bibdiv}
\begin{biblist}
\bib{AlAlIsYo}{article}{
   author={Al-Aidarous, E. S.},
   author={Alzahrani, E. O.},
   author={Ishii, H.},
   author={Younas, A. M. M.},
   title={A convergence result for the ergodic problem for Hamilton--Jacobi
   equations with Neumann-type boundary conditions},
   journal={Proc. Roy. Soc. Edinburgh Sect. A},
   volume={146},
   date={2016},
   number={2},
   pages={225--242},
}

\bib{ArTr}{article}{
   author={Armstrong, S. N.},
   author={Tran, H. V.},
   title={Viscosity solutions of general viscous Hamilton-Jacobi equations},
   journal={Math. Ann.},
   volume={361},
   date={2015},
   number={3-4},
   pages={647--687},
   issn={0025-5831},
}


\bib{Barl}{article}{
    AUTHOR = {Barles, G.},
     TITLE = {A short proof of the {$C^{0,\alpha}$}-regularity of
              viscosity subsolutions for superquadratic viscous
              {H}amilton-{J}acobi equations and applications},
   JOURNAL = {Nonlinear Anal.},
    VOLUME = {73},
      YEAR = {2010},
    NUMBER = {1},
     PAGES = {31--47},
      ISSN = {0362-546X},
}

\bib{BaDa}{article}{
    AUTHOR = {Barles, G.},
    AUTHOR={Da Lio, F.}
     TITLE = {On the boundary ergodic problem for fully nonlinear equations
              in bounded domains with general nonlinear {N}eumann boundary
              conditions},
   JOURNAL = {Ann. Inst. H. Poincar\'e Anal. Non Lin\'eaire},
  FJOURNAL = {Annales de l'Institut Henri Poincar\'e. Analyse Non
              Lin\'eaire},
    VOLUME = {22},
      YEAR = {2005},
    NUMBER = {5},
     PAGES = {521--541},
      ISSN = {0294-1449},
}

\bib{BaMi}{article}{
AUTHOR = {Barles, G.},
AUTHOR={Mitake, H.}, 
     TITLE = {A {PDE} approach to large-time asymptotics for boundary-value
              problems for nonconvex {H}amilton-{J}acobi equations},
   JOURNAL = {Comm. Partial Differential Equations},
  FJOURNAL = {Communications in Partial Differential Equations},
    VOLUME = {37},
      YEAR = {2012},
    NUMBER = {1},
     PAGES = {136--168},
}

\bib{Ca}{article}{
   author={Caffarelli, L. A.},
   title={Interior a priori estimates for solutions of fully nonlinear
   equations},
   journal={Ann. of Math. (2)},
   volume={130},
   date={1989},
   number={1},
   pages={189--213},
   issn={0003-486X},
}
\bib{CaLePo}{article}{
   author={Capuzzo-Dolcetta, I.},
   author={Leoni, F.},
   author={Porretta, A.},
   title={H\"older estimates for degenerate elliptic equations with coercive
   Hamiltonians},
   journal={Trans. Amer. Math. Soc.},
   volume={362},
   date={2010},
   number={9},
   pages={4511--4536},
}

\bib{CrIsLi}{article}{
   author={Crandall, M. G.},
   author={Ishii, H.},
   author={Lions, P.-L.},
   title={User's guide to viscosity solutions of second order partial
   differential equations},
   journal={Bull. Amer. Math. Soc. (N.S.)},
   volume={27},
   date={1992},
   number={1},
   pages={1--67},
}

\bib{DaL}{article}{ 
AUTHOR = {Da Lio, F.},
     TITLE = {Comparison results for quasilinear equations in annular
              domains and applications},
   JOURNAL = {Comm. Partial Differential Equations},
    VOLUME = {27},
      YEAR = {2002},
    NUMBER = {1-2},
     PAGES = {283--323},
      ISSN = {0360-5302},
}

\bib{DFIZ}{article}{ 
   author={Davini, A.},
   author={Fathi, A.},
   author={Iturriaga,  R.},
   author={Zavidovique, M.},
   title={Convergence of the solutions of the discounted equation},
   journal={Invent. Math. First online January, 2016 (Preprint is also available in arXiv:1408.6712)},
}
\bib{Ev}{article}{
   author={Evans, L. C.},
   title={Classical solutions of fully nonlinear, convex, second-order
   elliptic equations},
   journal={Comm. Pure Appl. Math.},
   volume={35},
   date={1982},
   number={3},
   pages={333--363},
   issn={0010-3640},
}
\bib{Go}{article}{
   author={Gomes, D. A.},
   title={Duality principles for fully nonlinear elliptic equations},
   conference={
      title={Trends in partial differential equations of mathematical
      physics},
   },
   book={
      series={Progr. Nonlinear Differential Equations Appl.},
      volume={61},
      publisher={Birkh\"auser, Basel},
   },
   date={2005},
   pages={125--136},
}

\bib{GMT}{article}{
   author={Gomes, D. A.},
   author={Mitake, H.},
   author={Tran, H. V.},
   title={The selection problem for discounted Hamilton-Jacobi equations: some non-convex cases},
   journal={preprint},
}

\bib{Ishi}{article}{
   author={Ishii, H.},
   title={Weak {KAM} aspects of convex Hamilton-Jacobi equations with Neumann type boundary conditions},
   journal={J. Math. Pures Appl. (9)},
   volume={95},
   date={2011},
   number={1},
   pages={99--135},
}

\bib{IsLi}{article}{
   author={Ishii, H.},
   author={Lions, P.-L.},
   title={Viscosity solutions of fully nonlinear second-order elliptic
   partial differential equations},
   journal={J. Differential Equations},
   volume={83},
   date={1990},
   number={1},
   pages={26--78},
}

\bib{IsMtTr1}{article}{ 
   author={Ishii, H.},
   author={Mitake, H.},
   author={Tran, H. V.},
   title={The vanishing discount problem and viscosity Mather measures. 
Part 1: the problem on a torus},
   journal={submitted, (Preprint is available in arXiv:1603.01051)},
}

\bib{IsMtTr3}{article}{ 
   author={Ishii, H.},
   author={Mitake, H.},
   author={Tran, H. V.},
   title={work under preparation},
}

\bib{Kr}{article}{
   author={Krylov, N. V.},
   title={Boundedly inhomogeneous elliptic and parabolic equations},
   language={Russian},
   journal={Izv. Akad. Nauk SSSR Ser. Mat.},
   volume={46},
   date={1982},
   number={3},
   pages={487--523, 670},
}
\bib{KrSa}{article}{
   author={Krylov, N. V.},
   author={Safonov, M. V.},
   title={An estimate for the probability of a diffusion process hitting a
   set of positive measure},
   language={Russian},
   journal={Dokl. Akad. Nauk SSSR},
   volume={245},
   date={1979},
   number={1},
   pages={18--20},
}

\bib{LPV}{article}{ 
   author={P.-L. Lions},
   author={G. Papanicolaou},
   author={S. R. S. Varadhan},
   title={Homogenization of Hamilton--Jacobi equations},
   journal={unpublished work (1987)},
}

\bib{Man}{article}{
   author={R. Ma\~n\'e},
   title={Generic properties and problems of minimizing measures of Lagrangian systems},
   journal={Nonlinearity},
   volume={9},
   date={1996},
   NUMBER = {2},
   pages={273--310},
}

\bib{Mat}{article}{
   author={J. N. Mather},
   title={Action minimizing invariant measures for positive definite Lagrangian systems},
   journal={Math. Z.},
   volume={207},
   date={1991}, 
   NUMBER = {2},
   pages={169--207},
}

\bib{MiTr}{article}{ 
   author={Mitake, H.},
   author={Tran, H. V.},
   title={Selection problems for a discounted degenerate viscous   Hamilton--Jacobi equation},
   journal={submitted, (Preprint is available in arXiv:1408.2909)},
}

\bib{Si}{article}{
   author={Sion, M.},
   title={On general minimax theorems},
   journal={Pacific J. Math.},
   volume={8},
   date={1958},
   pages={171--176},
}
\bib{Te}{article}{
   author={Terkelsen, F.},
   title={Some minimax theorems},
   journal={Math. Scand.},
   volume={31},
   date={1972},
   pages={405--413 (1973)},
}
\bib{Tr}{article}{
   author={Trudinger, N. S.},
   title={On regularity and existence of viscosity solutions of nonlinear
   second order, elliptic equations},
   conference={
      title={Partial differential equations and the calculus of variations,
      Vol.\ II},
   },
   book={
      series={Progr. Nonlinear Differential Equations Appl.},
      volume={2},
      publisher={Birkh\"auser Boston, Boston, MA},
   },
   date={1989},
   pages={939--957},
} 

\end{biblist}
\end{bibdiv}
\bye